
\documentclass[aihp]{imsart}
\usepackage{xcolor}
\RequirePackage{amsthm,amsmath,amsfonts,amssymb}
\RequirePackage[numbers]{natbib}
\usepackage{enumerate,url}
\usepackage{mathtools}

\startlocaldefs
\numberwithin{equation}{section}
\theoremstyle{plain}
\newtheorem{Thm}{Theorem}[section]
\newtheorem{Lem}[Thm]{Lemma}
\newtheorem{Prop}[Thm]{Proposition}
\newtheorem{Coro}[Thm]{Corollary}

\theoremstyle{remark}
\newtheorem{Rem}[Thm]{Remark}


\def\cal#1{\mathcal{#1}}
\newcommand{\comment}[1]{}
\def\fddto{\xrightarrow{\textit{f.d.d.}}}
\newcommand{\ind}{{\bf 1}}

\def\inddd#1{{\ind}_{\left\{#1\right\}}}

\newcommand{\proba}{\mathbb P}

\newcommand{\esp}{{\mathbb E}}

\newcommand{\inv}{^{-1}}

\newcommand{\eqnh}{\begin{eqnarray*}}
\newcommand{\eqne}{\end{eqnarray*}}
\newcommand{\eqnhn}{\begin{eqnarray}}
\newcommand{\eqnen}{\end{eqnarray}}
\newcommand{\equh}{\begin{equation}}
\newcommand{\eque}{\end{equation}}

\def\summ#1#2#3{\sum_{#1 = #2}^{#3}}

\def\sif#1#2{\sum_{#1=#2}^\infty}

\newcommand{\eqd}{\stackrel{d}{=}}

\def\topp#1{^{(#1)}}

\def\abs#1{\left|#1\right|}

\def\ccbb#1{\left\{#1\right\}}

\def\pp#1{\left(#1\right)}

\def\mmid{\;\middle\vert\;}

\def\floor#1{\left\lfloor #1 \right\rfloor}

\def\ceil#1{\left\lceil #1 \right\rceil}

\def\vv#1{{\boldsymbol #1}}
\def\vvc{{\boldsymbol c}}
\def\vvi{{\boldsymbol i}}

\def\vvx{{\vv x}}

\def\qmand{\quad\mbox{ and }\quad}

\def\qmwith{\quad\mbox{ with }\quad}
\def\mfa{\mbox{ for all }}

\def\mmas{\mbox{ as }}

\def\wt#1{\widetilde{#1}}
\def\wb#1{\overline{#1}}
\def\what#1{\widehat{#1}}

\def\limn{\lim_{n\to\infty}}

\def\limsupn{\limsup_{n\to\infty}}

\def\weakto{\Rightarrow}


\def\R{{\mathbb R}}

\def\N{{\mathbb N}}

\def\calB{\mathcal B}

\def\calD{\mathcal D}

\def\calF{\mathcal F}

\def\calG{\mathcal G}
\def\calH{\mathcal H}
\def\calI{\mathcal I}
\def\calJ{\mathcal J}

\def\calL{\mathcal L}
\def\calM{\mathcal M}

\def\calR{\mathcal R}

\def\cal#1{\mathcal{#1}}
\def\topp#1{^{\scriptscriptstyle (#1)}}

\def\sm{{\rm SM}}

\def\ddelta#1{\delta_{\pp{#1}}}
\def\PPP{{\rm PPP}}
\def\SM{{\rm SM}}

\endlocaldefs

\begin{document}

\begin{frontmatter}
\title{Phase transition for extremes of a family of stationary multiple-stable processes}

\runtitle{Extremes of multiple-stable processes}

\begin{aug}
\author[A]{\fnms{Shuyang} \snm{Bai}\ead[label=e1]{bsy9142@uga.edu}}
\and
\author[B]{\fnms{Yizao} \snm{Wang}\ead[label=e2]{yizao.wang@uc.edu}}

\address[A]{Department of Statistics \\ University of Georgia\\
310 Herty Drive\\
Athens, GA, 30602, USA., \printead{e1}}

\address[B]{Department of Mathematical Sciences\\University of Cincinnati\\2815 Commons Way\\Cincinnati, OH, 45221-0025, USA., \printead{e2}}
\end{aug}

\begin{abstract}
We investigate a family of stationary processes that may exhibit either long-range or short-range dependence, depending on the parameters. The processes can be represented as multiple stable integrals, and there are two parameters for the processes, the memory parameter $\beta\in(0,1)$ and the multiplicity parameter $p\in\N$. We investigate the macroscopic limit of extremes of the process, in terms of convergence of random sup-measures, for the full range of parameters. Our results show that (i) the extremes of the process exhibit long-range dependence when $\beta_p := p\beta-p+1\in(0,1)$, with a new family of random sup-measures arising in the limit, (ii) the extremes are of short-range dependence when $\beta_p<0$, with independently scattered random sup-measures arising in the limit, and (iii) there is a delicate phase transition at the critical regime $\beta_p = 0$.  
\end{abstract}

\begin{keyword}[class=MSC]
\kwd[Primary ]{60F17}
\kwd{60G70}
\kwd[; secondary ]{60G52}
\kwd{60K05}
\end{keyword}

\begin{keyword}
\kwd{stable regenerative set}
\kwd{random sup-measure}
\kwd{long-range dependence}
\kwd{infinite ergodic theory}
\kwd{phase transition}
\kwd{regular variation}
\kwd{multiple integral}
\kwd{renewal process}
\end{keyword}

\end{frontmatter}

\section{Introduction and main results}\label{sec:1}
\subsection{Background}
Extensive investigations on stationary stable processes started in 1980s. These processes extend naturally Gaussian processes, have regularly-varying tails, and they exhibit very rich dependence structures. The monograph by \citet{samorodnitsky94stable} summarizes foundational earlier developments of stable processes, and remains a classic on the topic. 

In early developments of stable processes, one of the main driving questions was to understand their ergodic properties. A modern approach in this direction was initiated by \citet{rosinski95structure}, who first related every stationary stable process to a dynamical system, more precisely an underlying flow on a certain space, and then revealed that the ergodic properties of the stationary stable process of interest can be derived from the one of the underlying flow. Then, instead of working on stable processes directly, one benefits from known results on dynamical systems, which shed light in turn on the studies of stable processes. 
Thanks to the seminal work of Rosi\'nski and another fundamental contribution later from \citet{samorodnitsky05null}, it is now well understood that all stationary stable processes can be  divided into three categories according to the ergodic-theoretic classification of the underlying flows, and each category of stable processes is with distinct ergodic properties. 
More precisely, the underlying flow can be positively recurrent, null recurrent 
 (a.k.a.~null-conservative) 
and dissipative: when the flow is dissipative, the corresponding stable process has a mixed moving average representation and is known to be mixing; when the flow is positively recurrent, the stable process is known to be non-ergodic; when the flow is null recurrent, the stable process is ergodic and weakly mixing, and yet it does not necessarily have the mixing property. 

From a different aspect, stable processes driven by 
positively- or null-recurrent flows are often associated with the notion of {\em long-range dependence} (a.k.a.~long memory), while those driven by dissipative flows are often associated with the notion of {\em short-range dependence}. The investigation of stochastic processes with long-range dependence has a long history \citep{beran13long,pipiras17long}, and in these studies stable processes provide important examples with representative properties. The recent monograph by \citet{samorodnitsky16stochastic} provided a first systematic presentation on stable processes (and more generally infinitely-divisible processes) incorporating the points of view of both ergodic-theoretic classifications and long-/short-range dependence.

Some later advances focus on further refinements of the aforementioned classifications of stable processes. When the underlying flow is dissipative, it is   known that the stable process is a mixed moving average process, and further classifications can be found in \citet{pipiras17stable}, summarizing mostly a series of extensive investigations by the same authors since 2000s. Within this class of stochastic processes with relatively simple structures, the situation is already very delicate. At the same time,  it was argued in \citet{samorodnitsky05null}   that the most challenging case is when the underlying flow is null-recurrent. In words, for processes within this category, while they are ergodic, they already exhibit abnormal asymptotic behaviors in terms of 
limit fluctuations.

Many recent advances on stable processes concern their limit theorems: for the partial-sum processes, for the extremes, and for total variations, just to mention a few. Sometimes, qualitatively different asymptotic behaviors are shown in complete correspondence to the aforementioned classifications based on ergodic properties, which is not surprising; but often much more elaborated information on the asymptotic behaviors in terms of limit theorems for fluctuations have been established, and new stochastic processes arise in such investigations. 
Our object of interest
 is a class
of stable processes
 driven by null-recurrent flows 
 and its extensions
  to be explained below. 
  We shall focus on limit theorems regarding their extremes, following the recent developments \citep{owada15maxima,lacaux16time,samorodnitsky19extremal}. For recent limit theorems of other types for stable and infinitely-divisible processes, we mention \citep{durieu20infinite,samorodnitsky16stochastic,owada16limit,basse17power,basse19limit,basse20berry}.

\subsection{Random sup-measures}
The asymptotic extremes at macroscopic level are 
commonly
 characterized by convergence of random sup-measures.  The foundation of random sup-measures and the corresponding weak convergence theory have been established in the 1980s and 1990s, and standard references are \citep{obrien90stationary,vervaat88random}.  We shall provide a brief introduction below where all the claims can be found in \citep{vervaat88random}. 
 
 In this paper, we shall focus on sup-measures  over the unit interval $[0,1]$ taking value in $\overline{\R}=[-\infty,\infty]$. In particular, suppose $\calG$ is the collection of open subsets of $[0,1]$ 
 (w.r.t.~subspace topology) .
  A sup-measure is a  mapping $m:\calG \mapsto \overline{\R}  $ that satisfies 
\[
m\pp{\bigcup_\lambda G_\lambda} = \sup_\lambda m(G_\lambda),
\]
for any family    $\{G_\lambda\}_\lambda$, $G_\lambda\in \calG$, with the convention $m(\emptyset) =   -\infty$. A sup-measure $m$ defined on open subsets admits a canonical extension to an arbitrary subset $A\subset [0,1]$ via $m(A):=\inf_{G\supset A, G\in \cal G} m(G)$.  The space of such sup-measures, denoted as $\sm=\sm([0,1],\overline{\R})$, can be equipped with the so-called sup vague topology, which is  generated by the subbase consisting of subsets of $\sm$  in the forms $\{m\in \sm: \ m(G)>x\}$ and $ \{m\in \sm: \ m(F)<x\}$ with $G\in \calG$,  $F\in \calF$ and $x\in \overline{\R}$,  where $\calF$ denotes the collection of closed 
 subsets of $[0,1]$. Equipped with the sup vague topology, the space  $\sm$  is   compact and second-countable,     and hence metrizable. 
 
A random sup-measure  $M$ is a random element taking value in the measurable space $(\sm,\calB(\sm))$, where $\calB(\sm)$ denotes the Borel $\sigma$-field on $\sm$ with respect to the sup vague topology. In addition, for each $A\in \calG\cup \calF$, the mapping $\sm \mapsto \wb{\R},  m \mapsto m(A)$ is measurable and hence each $M(A)$ is a ($\wb{\R}$-valued) random variable.   Suppose $\calG_0$ is the collection of all nonempty open subintervals of $[0,1]$. Then     a  mapping $M$ from the underlying probability space to  $\sm$ is a random sup-measure if and only if every $M(I)$ is a random variable for any $I\in\calG_0$.  Furthermore, the law of $M$ in $(\sm,\calB(\sm))$ is uniquely determined by the finite-dimensional distributions of the set-indexed process $\{M(I)\}_{I\in \calG_0}$. At last, the weak convergence of a sequence of random sup-measures 
 $\{M_n\}_{n\in\N}$ 
  to a limit random sup-measure $M$   in  $(\sm,\calB(\sm))$, 
 denoted by 
  \[
  M_n\weakto M,
  \]
 in $(\SM,\calB(\SM))$ as $n\to\infty$,
is equivalent to 
\begin{equation}\label{eq:fdd RSM conv}
\ccbb{M_n(I)}_{I\in\calG_0^*}\fddto\ccbb{M(I)}_{I\in\calG_0^*}
\end{equation}
as $n\rightarrow\infty$, where 
  the convergence above means convergence of finite-dimensional distributions    of set-indexed stochastic processes, and $\calG_0^*=\{I\in \calG_0:\  \proba\left(M(I)=M(\mathrm{cl}(I)\right)=1\}$, with $\mathrm{cl}(I)$ denoting the closure of $I$. In fact, for all the limit random sup-measures we shall encounter in this paper,  we have $\calG_0^*=\calG_0$.

As an  example of random sup-measure,  consider first
\equh\label{eq:isRSM}
\calM_{\alpha}^{\rm is}(G):=\sup_{i\in\N}\frac1{\Gamma_i^{1/\alpha}}\inddd{U_i\in G} , G\in\calG, 
\eque
where  $\alpha>0$, $\{\Gamma_i\}_{i\in\N}$ are consecutive arrival times of a standard Poisson process, and $\{U_i\}_{i\in\N}$ are i.i.d.~uniform random variables over $(0,1)$ independent from $\{\Gamma_i\}_{i\in\N}$.  
 The random sup-measure  $\calM_{\alpha}^{\rm is}$  is known as an
  {\em independently scattered $\alpha$-Fr\'echet random sup-measure} with Lebesgue control measure on $[0,1]$. Here, it is $\alpha$-Fr\'echet in the sense that $\proba(\calM_\alpha^{\rm is}(G) \le x) = e^{-{\rm Leb}(G)x^{-\alpha}}$ for all $x>0,G\in\calG$, and it is independently scattered in the sense that for disjoint $G_1,\ldots, G_m\in\calG$,  $\calM_\alpha^{\rm is}(G_1),\ldots,\calM_\alpha^{\rm is}(G_m)$ are independent. It is a classical result that, for i.i.d.~random variables $\{X_k\}_{k\in\N}$ with $\proba(X_k>x)\sim x^{-\alpha}$ as $x\rightarrow\infty$, 
\equh\label{eq:fdd RSM}
\frac1{c_n}M_n  \weakto \calM_\alpha^{\rm is}  \qmwith M_n(G) :=\max_{k/n\in G}X_k,\quad G\in\cal{G},
\eque
with $c_n = n^{1/\alpha}$;  here and below, we understand a maximum or a supremum over an empty set as $-\infty$.  

For a general stationary sequence of random variables, the above \eqref{eq:fdd RSM} may continue to hold   with possibly different normalizations $c_n$ and also different random sup-measures in place of $\calM_\alpha^{\rm is}$. 
Heuristically, the dependence of extremes of $\{X_k\}_{k\in\N}$ is considered weak, or of local nature, if the same $\calM_\alpha^{\rm is}$ arises in the limit with possibly a different normalization $c_n$. 
\citet{obrien90stationary} advocated using such a framework as \eqref{eq:fdd RSM} to characterize macroscopic limit of extremes of a stationary sequence when the limit is {\em not independently scattered}. 
Therein,  a thorough characterization of all possible limit random sup-measures (from a general sequence not necessarily with regularly varying tails) was carried out, and in particular, the 
shift-invariance
 and self-similarity properties of the limit are established.
For the model of our interest later, we shall see that independently scattered Fr\'echet random sup-measures as well as another family, denoted by $\calM_{\alpha,\beta,p}$ below, may arise in the limit, depending on the range of the parameters of the model.  

For readers familiar with functional central limit theorems (Donsker's theorem) for the partial-sum process but not as much with extremal limit theorems, it is worth noting that the convergence of random sup-measures reveals strictly more information than the convergence of the partial-maximum process
\[
\frac1{c_n}\ccbb{\max_{k=1,\dots,\floor{nt}} X_k}_{t\in[0,1]} \fddto \{\mathbb M_t\}_{t\in[0,1]},\quad n\rightarrow\infty.
\]
This is because different limits of random sup-measures may correspond to the same time-indexed process $\{\mathbb M_t\}_{t\in[0,1]}$ \citep[Proposition A.1]{durieu18family}.

\subsection{Stable-regenerative models}\label{sec:stable reg intro}
The model of our interest stems from a family of stable processes introduced first by \citet{rosinski96classes}. 
 The original model can be viewed as a prototype of stable processes driven by a null-recurrent flow. The investigations of its limit theorems dated back to the early 2000s \citep{resnick00growth,resnick04point,samorodnitsky04extreme}. Key advances appeared recently in \citep{owada15functional,owada15maxima,jung17functional,samorodnitsky19extremal}, and it is understood since these works that the large-scale behavior of this family is closely related to \emph{stable-regenerative sets} \citep{bertoin99subordinators}, and hence we name the original discrete-time stable processes {\em stable-regenerative stable processes} or {\em stable-regenerative models}. (We prefer     the latter as the two `stable' in the former name may cause   confusion.) 

 Let $\{X_k\}_{k\in\N}$ denote such a stable-regenerative model. This is a symmetric $\alpha$-stable process a parameter $\alpha\in(0,2)$ and a memory parameter $\beta\in(0,1)$. The representation is intrinsically related to renewal processes, and for which we introduce some notations. 
Consider a discrete-time renewal process starting at the origin with the consecutive renewal times denoted by $\vv\tau := \{\tau_0,\tau_1,\tau_2,\dots\}$, 
where $\tau_0=0<\tau_1<\tau_2<\ldots$. 
We let $F$ denote the renewal distribution
function,
 that is $F(x) = \proba(\tau_{i+1}-\tau_i\le x), i\in\N_0:=\{0,1,2,\ldots.\}, x\in\N$. Throughout, we assume 
\equh\label{eq:F}
\wb F(x) = 1-F(x) \sim \mathsf C_Fx^{-\beta} \mmas x\to\infty \qmwith \beta\in(0,1),
\eque
and the following technical assumption, with $f(n) :=
 F(n) - F(n-1)$ denoting the probability mass function of the renewal distribution,
\equh\label{eq:Doney}
\sup_{n\in\N}\frac{n f(n)}{\wb F(n)}<\infty.
\eque
In this paper a renewal process  is often {\em shifted} so that the starting point may not be the origin. 
An important notion is the {\em stationary shift distribution} 
 of the renewal process, denoted by $\pi$. Since the renewal distribution has infinite mean, the stationary shift 
 distribution
  $\pi$ is a $\sigma$-finite and infinite measure on $\N_0$ unique up to a multiplicative constant. 
We shall work with 
\equh\label{eq:pi}
\pi(\{k\}) 
:=
 \wb F(k), \quad k\in\N_0.
\eque
Then it is well-known that the law of the shifted renewal process $d+\vv\tau:=\{d,d+\tau_1,d+\tau_2,\ldots\}$   is shift-invariant (see, e.g., \cite{bai21tail}). Here by the  law  of  
\[
\vv{\tau}^*:=d+\vv\tau,
\]
we mean the  pushforward    of   the  product measure between $\pi$ and the law of $\vv \tau$. 
We shall also need
\equh\label{eq:w_n}
w_n 
:=
 \summ k1n \pi(\{k\})  = \sum_{k=1}^n\wb F(k)\sim  \frac{\mathsf C_F}{1-\beta} n^{1-\beta}  \text{ as }n\rightarrow\infty.
\eque

We are now ready 
to
 define the stable-regenerative model through a series representation. Let $\pi$ be as in \eqref{eq:pi}, and consider
\[
\sif i1\ddelta{\eta_i,d_i}\eqd \PPP\pp{
 (0,\infty] 
\times\N, \alpha x^{-\alpha-1}dxd\pi}.
\]
Suppose 
$\{\vv \tau\topp i\}_{i\in \N}$ are i.i.d.\ copies of the non-shifted renewal process $ \vv \tau$   which are independent of the point process above.
Set $\vv \tau\topp{i,d_i} 
:=
d_i+\vv \tau\topp i$, $i\in \N$. Then, the stable-regenerative model is defined as
\equh\label{eq:series infty}
\ccbb{X_k}_{k\in\N}=\ccbb{\sif i1 \varepsilon_i\eta_i\inddd{k\in \vv\tau\topp{i,d_i}}}_{k\in\N},
\eque
where $\{\varepsilon_i\}_{i\in\N}$ are i.i.d.~Rademacher random variables independent from all other random elements previously introduced.

The model \eqref{eq:series infty} exhibits long-range dependence in terms of limit theorems as revealed in \citep{owada15functional,owada15maxima,samorodnitsky19extremal}. In particular, it was shown in \citep{samorodnitsky19extremal} that  
 random sup-measures with long-range clustering, denoted by $\calM_{\alpha,\beta}$,
arise
 as macroscopic limits of extremes of \eqref{eq:series infty}. Thanks to translation invariance and self-similarity, it suffices to restrict to random sup-measures on $[0,1]$, and we do so throughout.

The random sup-measure $\calM_{\alpha,\beta}$ is built upon independent stable regenerative sets that we now recall some basics. 
A $\beta$-stable-regenerative set, say $\wt\calR_\beta$, can be defined as
the scaling limit of $\vv\tau/n$, viewed as a random closed set, as $n\to\infty$, with $\vv\tau$ as the set of renewal times of non-shifted renewal processes with renewal distribution corresponding to \eqref{eq:F}; or, $\wt\calR_\beta$ can be defined as 
 the closure of the image of a $\beta$-stable subordinator which satisfies   $0\in \wt\calR_\beta$ almost surely. 
Then, by a {\em randomly shifted $\beta$-stable regenerative set} we refer to
\[
\calR_\beta:=\wt\calR_\beta+B_{1-\beta,1},
\] where $B_{1-\beta,1}$ is a Beta($1-\beta,1)$ distributed shift ($\proba(B_{1-\beta,1}\le x) = x^{1-\beta}, x\in[0,1]$) independent from $\wt\calR_\beta$. The stable-regenerative sets have very rich structures and many fundamental developments took place in 1980s and 1990s \citep{fitzsimmons85intersections,fitzsimmons88stationary,fitzsimmons85set,bertoin99subordinators}. 
They are important examples of random closed sets, of which our reference is \citep{molchanov17theory}.
The appearance of $B_{1-\beta,1}$ is necessary  to make the randomly shifted stable-regenerative sets stationary 
(a.k.a.~shift-invariant)
in an appropriate sense, and this has been well understood (see e.g.\ \citep{samorodnitsky19extremal}). 

Let
$\{\calR_{\beta,i}\}_{i\in\N}$
denote i.i.d.~copies of $\calR_\beta$. 
We also need the following (see \citep{samorodnitsky19extremal}) regarding their intersections:
\equh\label{eq:p'}
\bigcap_{i=1}^q\calR_{\beta,i} \eqd
\begin{cases}
 \calR_{\beta_q}, & \mbox{ if } q=1,\dots,p',\\
 \emptyset, & \mbox{ otherwise,}
 \end{cases}
 \qmwith p':=\max\{q\in\N:\beta_q> 0\}.
\eque
Now we introduce $\calM_{\alpha,\beta}$. Note that our representation below is not the one used in \citep{samorodnitsky19extremal} as the definition, but it was already used in the proofs therein. Recall $\{\varepsilon_i\}_{i\in\N}$ are i.i.d.~Rademacher random variables.  Let $\{\Gamma_i\}_{i\in\N}$ be consecutive  arrival times of   a standard Poisson process on $[0,\infty)$. Suppose the three sequences $\{\calR_{\beta,i}\}_{i\in\N}$, $\{\varepsilon_i\}_{i\in\N}$ and $\{\Gamma_i\}_{i\in\N}$ are independent of each other. We set
\equh\label{eq:RSM cluster}
\calM_{\alpha,\beta}(G) := \sup_{J\subset\N, |J|\le p'}\calM_{\alpha,\beta,J}(G)
\qmwith 
 \calM_{\alpha,\beta,J}(G) := 
\begin{cases}
\displaystyle\sum_{i\in J}\frac{\varepsilon_i}{\Gamma_i^{1/\alpha}}, & \mbox{ if } \calR_{\beta,J}\cap G\ne\emptyset,\\
-\infty, & \mbox{ otherwise.}
\end{cases}
\eque
We interpret that each $\calM_{\alpha,\beta,J}$ representing an {\em aggregated cluster} of  {\em individual clusters}, with each  individual  cluster indexed by $i\in J$ having magnitude $\varepsilon_i\Gamma_i^{-1/\alpha}$ and locations $\calR_{\beta,i}$. The aggregated cluster then has magnitude and locations represented by
\equh\label{eq:aggregated cluster p=1}
\sum_{i\in J}\frac{\varepsilon_i}{\Gamma_i^{1/\alpha}} \qmand \calR_{\beta,J} = \bigcap_{i\in J}\calR_{\beta,i}. 
\eque
We interpret the clustering as {\em long-range clustering}, as the locations are represented by unbounded random sets.
\begin{Rem}
  Note that for our convention, the aggregated clusters and individual clusters may have negative values, and hence they do not represent extremal clusters in the usual sense. But our interpretation is convenient when extending the results later.
\end{Rem}

When $\beta\le 1/2$, $\calM_{\alpha,\beta}$ is an $\alpha$-Fr\'echet random sup-measure, in the sense that  for all $G_1,\dots,G_d\in\calG$, $(\calM_{\alpha,\beta}(G_i))_{i=1,\dots,d}$ has multivariate $\alpha$-Fr\'echet distribution, that is, for all $a_1,\dots,a_d>0, \max_{i=1,\dots,d}a_i\calM_{\alpha,\beta}(G_i)$ has an $\alpha$-Fr\'echet distribution \citep{stoev06extremal},
 and yet it is no longer  independently scattered due to the presence of long-range clustering. Note that $p'=1$ when $\beta\le1/2$ so there is no aggregation in this regime. However, with $\beta>1/2$, due to the appearance of aggregation even the   marginal distribution $\calM_{\alpha,\beta}(G)$ is not $\alpha$-Fr\'echet. 

The random sup-measure $\calM_{\alpha,\beta}$ first appeared in \citep{lacaux16time} with $\beta\in(0,1/2)$ and \citep{samorodnitsky19extremal} with $\beta\in(0,1)$. It was proved therein that, with the empirical random sup-measure $M_n$ as in \eqref{eq:fdd RSM},  as $n\rightarrow\infty$,
\[
\frac1{w_n^{1/\alpha}} M_n    \weakto \calM_{\alpha,\beta}.
\]

\subsection{A multiple-stable version of stable-regenerative models}

The model of   interest in this paper is the following   extension of \eqref{eq:series infty}.  With the same ingredients defining \eqref{eq:series infty} above, consider
\equh\label{eq:series infty p}
\ccbb{X_k}_{k\in\N} =\ccbb{\sum_{0<i_1<\cdots<i_p} [\varepsilon_\vvi][\eta_\vvi]\inddd{k\in \bigcap_{r=1}^p\vv\tau\topp{i_r,d_{i_r}}}}_{k\in\N},
\eque
where $[\varepsilon_\vvi] = \varepsilon_{i_1}\times\cdots\times \varepsilon_{i_p}$, and similar notation for $[\eta_\vvi]$. 
It is known that such multiple series 
converges  almost surely and unconditionally 
(e.g.,
\citep[Theorem 1.3 and Remark 1.5]{samorodnitsky89asymptotic}).  Note that when $p=1$ it recovers the model \eqref{eq:series infty}. 

From now on, we   shall use the term \emph{stable-regenerative model} to refer to the more general class of models \eqref{eq:series infty p}.
The stable-regenerative model and its variants  have been investigated  in literature recently. In particular, when $p\ge 2$  and
 $\beta_p=p\beta -p+1\in(0,1)$,
  a functional central limit theorem has been established for \eqref{eq:series infty p} in \citep{bai20functional}, where the limits are a new family of a self-similar multiple-stable process with stationary increments.  See also \citep{bai20limit,bai22limit} for   variations of the model that scale  to multiple-Gaussian processes known as Hermite processes. 
 Our main motivation is to investigate the case $\beta_p\le 0$, about which little has been known in the literature. 
\begin{Rem}
It might be helpful to interpret the stable-regenerative model as follows. Consider the case $p=1$ in \eqref{eq:series infty} for the sake of simplicity. Then, one may view each renewal process $\vv\tau\topp{i,d_i}$ recording the visit times of a certain state of a null-recurrent Markov chain, and each value $\varepsilon_i\eta_i$ is the reward collected by the chain when the Markov chain visits the state. This interpretation has a flavor of aggregated random walks in random scenery, as explained in \citep{wang22choquet}. 
\end{Rem}

 \begin{Rem}
It is well-known that a series representation as in \eqref{eq:series infty p} can be alternatively expressed as a multiple stable integral, whence the name {\em multiple-stable processes}. We sketch the representation, starting with $p=1$. In this case, \eqref{eq:series infty} has the equivalent stochastic-integral representation
\[
\ccbb{X_k}_{k\in\N} \eqd C_\alpha\ccbb{\int _{\wt\Omega\times\N}\inddd{k\in s+\wt{\vv\tau}(\wt\omega)}M_\alpha(d\wt\omega ds)}_{k\in\N},
\]
for some explicit constant $C_\alpha$,
where   the $(\wt\Omega,\wt\calF,\wt\proba)$ is a  measure space equipped with a probability measure $\wt\proba$ (different from the underlying probability space), and moreover on $(\wt\Omega,\wt\calF,\wt\proba)$, $\wt{\vv\tau} = \{\wt \tau_0,\wt\tau_1,\wt\tau_2,\dots\}$, $\wt \tau_0=0<\wt \tau_1<\wt \tau_2<\ldots$, is a  renewal process with renewal distribution $F$ as before,  and   $M_\alpha$ is a symmetric $\alpha$-stable random measure on $\wt\Omega\times\N$ with control measure $\wt\proba\times \pi$, with $\pi$ as in \eqref{eq:pi}.  The above equivalence holds only for $\alpha\in(0,2)$ \citep{samorodnitsky94stable}, while the stochastic integral representation is valid for $\alpha=2$, and in this case the stochastic integral is with respect to a Gaussian random measure. 

The multiple-stable integral representations  would take more effort to explain. In particular,   the integral {\em excludes} the diagonals.
Since the development of our proofs 
relies
 exclusively on the series representation, we choose not to further elaborate the multiple-stable-integral representation, but refer the interested readers to  \citep[Section 3.1 and Example 4.2]{bai20functional} for more details.
\end{Rem}
 
\begin{Rem}For processes represented by multiple-integrals with respect to Gaussian random measures, most commonly known as Gaussian chaos, the literature is extensive. However, much less   has been devoted to multiple-stable (non-Gaussian) processes since early investigations in the 1980s and 1990s  \citep{samorodnitsky89asymptotic,rosinski99product,krakowiak86random,kwapien92random}. In most of the aforementioned papers, the focus is rather on the properties of multiple-stable integrals, instead of the stochastic processes represented by such integrals. 

We also mention that the multiple-stable process \eqref{eq:series infty p} is of its own interest as it has led to a new representation of Hermite processes in terms of stable regenerative sets, 
their joint intersections, and the corresponding joint local times
 \citep{bai20representations}. 

\end{Rem}

\subsection{Main results: phase transition for extremes}
Our main results reveal a phase transition on the asymptotic behaviors of extremes for the multiple-stable version of stable-regenerative models. Before stating the limit theorems, we first introduce the limit random sup-measures at the super-critical regime, denoted by $\calM_{\alpha,\beta,p}$ below. This extends the limit random sup-measures in \citep{samorodnitsky19extremal} corresponding to $p=1$ here.

For $p\ge 2$, we extend the definition  \eqref{eq:RSM cluster} as follows.  
Throughout, denote
\[
\calD_p=\ccbb{\vvi=(i_1,\ldots,i_p)\in \ \N^p:\ i_1<\cdots<i_p}.
\]
Let   $\{\calR_{\beta,i}\}_{i\in\N}$, $\{\varepsilon_i\}_{i\in\N}$ and $\{\Gamma_i\}_{i\in\N}$ be as introduced before \eqref{eq:RSM cluster}.
The point process 
\[
\sum_{\vvi\in\calD_p}\ddelta{[\varepsilon_\vvi]/[\Gamma_\vvi]^{1/\alpha},\calR_{\beta,\vvi}}
\]
encodes the magnitude and locations of each {\em individual cluster} now indexed by $\vvi \in \calD_p$.
 To describe the index set of each {\em aggregated cluster} is a little more involved than in the case $p=1$. This time, each aggregated cluster is indexed by some $\vv c = (c_1,\dots,c_q)\in\calD_q$ for some $q\ge p$ as follows. Introduce the multi-index set  
\[
J(\vvc):=\ccbb{\vvi=(i_1,\ldots,i_p)\in\calD_p:\vvi\subset\vvc},
\]
where   by $\vvi\subset\vvc$ we mean $\{i_1,\dots,i_p\}\subset \{c_1,\dots,c_q\}$.
In particular, $|J(\vvc)| = \binom qp$.  For example, if $p=3, q=4$, then $J((1,3,5,6)) = \{(1,3,5),(1,3,6),(1,5,6),(3,5,6)\}$. 
Next, define the following countable collection of index sets 
\begin{equation}\label{eq:Jpp}
\calJ_{p,p'} := \ccbb{J(\vvc):\vvc\in\calD_q, q=p,\dots,p'} ,
\end{equation}
where $p'$ is as in \eqref{eq:p'}.  
Now  we say  each $J\in \calJ_{p,p'}$ 
 corresponds to an aggregated cluster, of which the magnitude and locations are represented by
\equh\label{eq:aggregated cluster}
\sum_{\vvi\in J}\frac{[\varepsilon_\vvi]}{[\Gamma_\vvi]^{1/\alpha}} \qmand \calR_{\beta,J} = \bigcap_{\vvi\in J}\calR_{\beta,\vvi},
\eque
respectively.
This time, the contributions to the aggregated cluster are from  individual clusters  indexed  by $\vvi\in J$. 
Note that by our choice of $p'$, $\calR_{\beta,J}\ne\emptyset$ almost surely for all $J\in \calJ_{p,p'}$  (see \eqref{eq:p'}). The expressions in \eqref{eq:aggregated cluster} when $p=1$ can be identified with those in \eqref{eq:aggregated cluster p=1}   (in particular $\calJ_{1,p'} = \{J\subset \N:\ 1\le |J|\le p'\}$).

We now introduce
\equh\label{eq:RSM}
 \calM_{\alpha,\beta,p}(G)  := \sup_{J\in\calJ_{p,p'}}\calM_{\alpha,\beta,J}(G)\qmwith \calM_{\alpha,\beta,J}(G) :=  \begin{cases}
\displaystyle 
\sum_{\vvi\in J}\frac{[\varepsilon_\vvi]}{[\Gamma_\vvi]^{1/\alpha}}, & \mbox{ if } \calR_{\beta,J}\cap G\ne\emptyset,\\
-\infty, & \mbox{ otherwise,}
\end{cases} 
\quad  G\in\calG.
\eque
Note that we use the notation $\calM_{\alpha,\beta,J}$ both here and previously in \eqref{eq:RSM cluster}. The two places are consistent in the sense that from now on we view $\calM_{\alpha,\beta,J}$ in \eqref{eq:RSM cluster} as a special case of the above with $p=1$.
\begin{Rem}
The definition  of $\calM_{\alpha,\beta,p}$  in \eqref{eq:RSM}   readily yields  a random sup-measure. 
 Indeed,  it is easily verified that each  $\calM_{\alpha,\beta,J}$ in \eqref{eq:RSM} is  a random sup-measure which takes 2 values, and hence so is the countable sup of $\calM_{\alpha,\beta,J}$ in \eqref{eq:RSM}.   It follows from that fact  $\proba(x\in \calR_{\beta,i})=0$ for any $x\in [0,1]$ (e.g., \cite[Proposition 1.9]{bertoin99subordinators}) that the   property $\calG^*_0=\calG_0$ mentioned below \eqref{eq:fdd RSM conv}  holds for $\calM_{\alpha,\beta,p}$.   An alternative representation of $\calM_{\alpha,\beta,p}(G)$ is provided in Section \ref{sec:agg clusters} which may shed light on the arise  of $\calM_{\alpha,\beta,p}$ as the limit in the super-critical regime.
\end{Rem}
\begin{Rem}\label{rem:phase transition super-crit}
In view of  \eqref{eq:RSM},
the number $p'$ defined in \eqref{eq:p'}  may be called the largest `size' of an aggregated cluster given $\beta$ and $p$. From \eqref{eq:p'}, we see that there is another layer of phase transitions within the supercritical regime $\beta>1-1/p$, in the sense that as $\beta$ increases towards $1$,   the  maximum `size' $q'$ of the aggregated cluster will also increase to infinity.
\end{Rem}
\begin{Rem}\label{rem:thinning}
In the case $p=1$, we have a further simplified representation as follows.  
First, as an alternative representation of \eqref{eq:RSM cluster} is  
\[
\calM_{\alpha,\beta,1}(G) = \sup_{J\subset\N,|J|\le p'}\calM_{\alpha,\beta,J}^+(G)\qmwith 
\calM_{\alpha,\beta,J}^+(G) := \begin{cases}
\displaystyle \sum_{i\in J}\frac{(\varepsilon_i)_+}{\Gamma_i^{1/\alpha}}, &\mbox{ if } \calR_{\beta,J}\cap G\ne\emptyset,\\
-\infty, & \mbox{ otherwise.}
\end{cases}
\]
once one realizes that for those $i\in\N$ such that $\varepsilon_i = -1$, $\varepsilon_i/\Gamma_i^{1/\alpha}$ has no contribution to the limit. Then, one may further drop all these terms by a standard thinning argument of Poisson point process, and arrive at
\begin{align*}
\ccbb{\calM_{\alpha,\beta,1}(G)}_{G\in\calG} &\eqd \ccbb{2^{-1/\alpha}\sup_{J\subset\N, |J|\le p'} \sum_{i\in J}\frac1{\Gamma_i^{1/\alpha}}\inddd{\calR_{\beta,J}\cap G\ne\emptyset}}_{G\in\calG}\\
 &\eqd \ccbb{2^{-1/\alpha}\sup_{t\in G}\sum_{i\in\N}\frac 1{\Gamma_i^{1/\alpha}}\inddd{t\in \calR_{\beta,i}\cap G\ne\emptyset}}_{G\in\calG}.
\end{align*}
For $p \ge 2$, such a simplified representation without any Rademacher random variables seems no longer available. Instead, since $\calM_{\alpha,\beta,p}(G)>0$ for all non-empty $G\in\calG$ (see Proposition \ref{pro:RSM}), we have the following simplification: \[
\calM_{\alpha,\beta,p}(G) = \sup_{J\in\calJ^+_{p,p'}}\calM_{\alpha,\beta,J}(G) \qmwith \calJ^+_{p,p'} := \ccbb{J\in\calJ_{p,p'}: \sum_{\vvi\in J}\frac{[\varepsilon_\vvi]}{[\Gamma_\vvi]^{1/\alpha}}>0}.
\]
In words, those individual clusters with negative magnitudes do not have any impact on the law of $\calM_{\alpha,\beta,p}$, and those indexed by $J\in\calJ_{p,p'}^+$ are the (long-range) {\em extremal clusters} in the common sense. 
\end{Rem}

The main result of this paper is the following. 
\begin{Thm}\label{thm:RSM}
We have the follow weak convergence in the    space of sup-measures  
 $\sm([0,1], \overline{\R})$: as $n\rightarrow\infty$,
\[
\frac1{c_n} M_n  \weakto \begin{cases}
 \mathfrak C_{F,p}^{1/\alpha}\calM_{\alpha,\beta,p} , & \mbox{ if } \beta_p>0,\\\\
 \mathfrak C_{F,p}^{1/\alpha}\calM_{\alpha}^{\rm is}   , & \mbox{ if } \beta_p\le  0,
\end{cases}
\]
with 
\[
c_n = \begin{cases}
\displaystyle n^{(1-\beta_p)/\alpha}, & \mbox{ if } \beta_p>0,\\\\
\displaystyle \pp{\frac{n(\log\log n)^{p-1}}{\log n}}^{1/\alpha}, & \mbox{ if } \beta_p=0,\\\\
(n\log ^{p-1}n)^{1/\alpha}, & \mbox{ if } \beta_p<0,
\end{cases}
\]
the random sup-measures  $\calM_{\alpha,\beta,p}$ in \eqref{eq:RSM} and $\calM_{\alpha}^{\rm is}$ in  \eqref{eq:isRSM},
and the constant 
$\mathfrak C_{F,p}$,
depending on $F$ (and hence $\beta$) and $p$ only,
to be specified in \eqref{eq:C super-crit}, \eqref{eq:thm sub-crit} and \eqref{eq:thm crit} in later sections  (where we shall restate the limit theorem before the proof in each regime again for convenience).
\end{Thm}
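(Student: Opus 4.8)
The plan is to establish the three regimes by reducing the convergence of random sup-measures to the finite-dimensional convergence \eqref{eq:fdd RSM conv} over the intervals $I\in\calG_0$, and then to analyze the empirical maxima $M_n(I) = \max_{k/n\in I} X_k$ via the series representation \eqref{eq:series infty p}. The natural first step is a Poissonization/truncation decomposition: split each series term $[\varepsilon_\vvi][\eta_\vvi]\ind\{k\in\bigcap_r \vv\tau^{(i_r,d_{i_r})}\}$ according to the size of $[\eta_\vvi]$ and the number of renewal sets in $\bigcap_r\vv\tau^{(i_r,d_{i_r})}$ that actually contribute over the window $\{1,\dots,n\}$. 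The key probabilistic input is the asymptotics for a single (shifted) renewal set: the probability that a renewal process started from its stationary shift hits a given point is of order $\wb F\asymp k^{-\beta}$, so the expected number of indices $k\le n$ hit by $q$ independent shifted renewal sets scales like $n^{1-q(1-\beta)+o(1)} = n^{\beta_q+o(1)}$ (up to logarithmic corrections at criticality), by the intersection result \eqref{eq:p'}. This is what makes $\beta_p = p\beta-p+1$ the governing exponent: a $p$-tuple of distinct indices contributes a "cluster" living on $\bigcap_{r=1}^p\calR_{\beta,i_r}$, which is nonempty at macroscopic scale precisely when $\beta_p>0$, and only a microscopic (single-point-type) set when $\beta_p<0$.

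In the supercritical regime $\beta_p>0$, I would show that after rescaling space by $n$ and magnitudes by $c_n = n^{(1-\beta_p)/\alpha}$, the point process of (rescaled renewal-set, rescaled magnitude) pairs — now enriched by the aggregation coming from tuples $\vvi,\vvj$ sharing coordinates, which forces one to track joint intersections $\bigcap_{\vvi\in J}\calR_{\beta,\vvi}$ indexed by $J\in\calJ_{p,p'}$ as in \eqref{eq:Jpp}–\eqref{eq:aggregated cluster} — converges to the Poisson structure underlying $\calM_{\alpha,\beta,p}$ in \eqref{eq:RSM}. Concretely: the $\eta_i$'s rescaled by the appropriate power give the Poisson arrivals $\Gamma_i^{-1/\alpha}$, the rescaled renewal sets $\vv\tau^{(i,d_i)}/n$ converge jointly as random closed sets to the randomly shifted stable-regenerative sets $\calR_{\beta,i}$, and an aggregated cluster indexed by $\vvc\in\calD_q$ contributes its value $\sum_{\vvi\subset\vvc}[\varepsilon_\vvi]/[\Gamma_\vvi]^{1/\alpha}$ over the set $\bigcap_{\vvi\subset\vvc}\calR_{\beta,\vvi}$, which matches $\calM_{\alpha,\beta,J(\vvc)}$. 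One then passes from convergence of these point processes to convergence of the sup-functional $G\mapsto\sup_J\calM_{\alpha,\beta,J}(G)$; the continuity of the sup-functional in the sup-vague topology, together with the tail control ensuring only finitely many individual clusters matter above any threshold, yields \eqref{eq:fdd RSM conv} with $\calG_0^* = \calG_0$ since $\proba(x\in\calR_{\beta,i})=0$.

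In the subcritical regime $\beta_p<0$ (and, with logarithmic corrections, at $\beta_p=0$) the mechanism is different and, I expect, this is where the main work lies. Here no macroscopic aggregated cluster survives: each $p$-tuple that contributes over $\{1,\dots,n\}$ typically does so at a single rescaled location, so the limit should be independently scattered, namely $\calM_\alpha^{\rm is}$. The plan is to show that the contribution of $M_n(I)$ is, up to negligible terms, a maximum over roughly $n\log^{p-1}n$ (resp.\ $n(\log\log n)^{p-1}/\log n$ at criticality) weakly-dependent heavy-tailed summands whose tail index is $\alpha$ and whose clustering vanishes at macroscopic scale; a blocking argument (big-block/small-block) plus a Poisson-approximation/mixing estimate then gives convergence to the independently scattered $\alpha$-Fréchet sup-measure with the stated normalization $c_n$. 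The main obstacles I anticipate are (a) getting the exact normalizing constant $\mathfrak C_{F,p}$ and the precise logarithmic factor in $c_n$ at $\beta_p=0$ — this requires sharp renewal-theoretic estimates, where assumption \eqref{eq:Doney} (Doney's condition) and $w_n\sim \mathsf C_F n^{1-\beta}/(1-\beta)$ from \eqref{eq:w_n} enter — and (b) controlling the "diagonal" and near-diagonal terms excluded from the multiple-stable structure, i.e.\ ruling out that tuples with repeated or nearly-repeated indices, or the interaction between many small $\eta_\vvi$'s, produce a non-negligible contribution to the maximum; this is a uniform-negligibility estimate that must be handled separately in each regime. A final routine step is to verify tightness in $\sm([0,1],\overline\R)$, which is automatic since that space is compact, so only the finite-dimensional convergence needs proof.
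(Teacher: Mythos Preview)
Your outline for the super-critical regime $\beta_p>0$ matches the paper's approach closely: truncate the series to finitely many indices, use the joint convergence of the rescaled renewal intersections $R_{n,\vvi}/n$ to the stable-regenerative sets $\calR_{\beta,\vvi}$ (this is borrowed from \cite{samorodnitsky19extremal}), identify the aggregated clusters indexed by $J\in\calJ_{p,p'}$, and close by a triangular approximation. The only point you leave implicit is the ``sparsity'' step (the paper's Lemma~\ref{lem:sparse}): one must argue that, with high probability, each location $k$ is covered by at most $p'$ of the first $\ell$ renewal sets, so that the truncated maximum really equals a supremum over $\calJ_{p,p'}(\ell)$; this is how the representation \eqref{eq:RSM} arises from the series \eqref{eq:p>=1}.

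For $\beta_p\le 0$, your plan has the right picture but two of the concrete mechanisms you name would not work as stated. First, the phrase ``big-block/small-block plus a Poisson-approximation/mixing estimate'' suggests the classical route via mixing conditions or the Davis--Hsing/Basrak--Segers framework. The paper explicitly avoids this (see Remark~1.13): the candidate extremal index and the extremal index differ here, which blocks the standard approach. Instead the paper applies the Arratia--Goldstein--Gordon two-moment method directly to the Bernoulli array $\chi_{\vvi,j}=\ind\{R_{n,\vvi}\cap\calI_{n,j}\ne\emptyset,\,[\varepsilon_\vvi]=1\}$ using a dependency graph where $(\vvi,j)\sim(\vvi',j')$ iff $\vvi\cap\vvi'\ne\emptyset$. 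The bulk of the work is computing $b_{n,1}$ and $b_{n,2}$ via sharp estimates on the counts $C_{n,2}(r,K)$ and the correlations $\rho_n(r)$ for each overlap size $r$. Second, and more critically, you do not anticipate the shape of the truncation region. The paper truncates not over $\{\vvi:[\vvi]\le Kr_n\}$ but over $\calH(n,K)=\{\vvi\in\calD_p^*(Kr_n):i_p\le w_n\}$. When $\beta_{p-1}<0$ these differ, and Remark~\ref{rem:region} shows that without the extra constraint $i_p\le w_n$ the two-moment method fails: $b_{n,2}(K)\not\to 0$ and even the first moment is wrong, because of conditional clusters of unbounded size. Establishing that the discarded region $\calD_p^*(Kr_n)\setminus\calH(n,K)$ is negligible is itself a separate step (Lemma~\ref{lem:remainder}). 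A further device you omit is the replacement of the random $[\Gamma_\vvi]$ by the deterministic $[\vvi]$ (Lemma~\ref{lem:1}), which is what makes the Bernoulli variables tractable. At criticality $\beta_p=0$ the same skeleton applies but the block size is forced to lie in a narrow window $d_n\sim n/\log^p n$ (Remark~\ref{rem:difference}), and the remainder control requires an extra two-scale argument (Lemma~\ref{lem:remainder1}) since the $L^2$ bound used sub-critically no longer suffices.
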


As a corollary, we summarize the marginal limit theorem for the   partial maxima. 
  Note that $\calM_{\alpha,\beta,p}([0,1])$
 is almost surely equal to the random variable \equh\label{eq:Z}
\mathsf Z_{\alpha,\beta,p}:= \sup_{J\in\calJ_{p,p'}}\sum_{\vvi\in J}\frac{[\varepsilon_\vvi]}{[\Gamma_\vvi]^{1/\alpha}},
\eque
because each $\calR_{\beta,J}\cap [0,1]\ne\emptyset$ for all $J\in \calJ_{p,p'}$ (see \cite[Corollary B.3]{samorodnitsky19extremal}). 
Some further simplification may be obtained. If $p'=p$, then $\mathsf Z_{\alpha,\beta,p} = \sup_{\vvi\in\calD_p}[\varepsilon_\vvi]/[\Gamma_\vvi]^{1/\alpha}$, and if $p'=p=1$ (which is exactly the case $p=1,\beta\in(0,1/2]$), then $\mathsf Z_{\alpha,\beta,1}\eqd 2^{-1/\alpha}\Gamma^{-1/\alpha}$ (see Remark \ref{rem:thinning}). 

\begin{Coro}
With $c_n$ and $\mathfrak C_{F,p}$ as above,
we have
\[
\limn\proba\pp{\frac1{c_n}\max_{k=1,\dots,n}X_k \le x} = \begin{cases}
\proba(\mathfrak C_{F,p}^{1/\alpha}\mathsf Z_{\alpha,\beta,p}\le x), & \mbox{ if } \beta_p>0,\\
\exp\pp{-\mathfrak C_{F,p} 
x^{-\alpha}}, & \mbox{ if } \beta_p\le 0,
\end{cases}
\]
with $\mathsf Z_{\alpha,\beta,p}$ defined in \eqref{eq:Z}. In particular, $\mathsf Z_{\alpha,\beta,p}$ is not an $\alpha$-Fr\'echet random variable unless $p=1,\beta\in(0,1/2]$. 
\end{Coro}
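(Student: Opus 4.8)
The plan is to deduce the Corollary from Theorem~\ref{thm:RSM} by evaluating the limiting random sup-measures on $[0,1]$. Since $[0,1]$ is simultaneously open and closed as a subset of itself, both $\{m:m([0,1])>x\}$ and $\{m:m([0,1])<x\}$ are subbasic open sets of the sup vague topology, so the evaluation map $h\colon m\mapsto m([0,1])$ on $\sm([0,1],\overline{\R})$ is continuous (and positively homogeneous), with $h(c_n^{-1}M_n)=c_n^{-1}M_n([0,1])=c_n^{-1}\max_{k=1,\dots,n}X_k$. The continuous mapping theorem applied to Theorem~\ref{thm:RSM} then gives $c_n^{-1}\max_{k\le n}X_k\weakto\mathfrak C_{F,p}^{1/\alpha}\calM_{\alpha,\beta,p}([0,1])$ when $\beta_p>0$ and $c_n^{-1}\max_{k\le n}X_k\weakto\mathfrak C_{F,p}^{1/\alpha}\calM_{\alpha}^{\rm is}([0,1])$ when $\beta_p\le0$. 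Since $\calM_{\alpha,\beta,p}([0,1])=\mathsf Z_{\alpha,\beta,p}$ a.s.\ (recalled just before~\eqref{eq:Z}) and, by~\eqref{eq:isRSM}, $\calM_{\alpha}^{\rm is}([0,1])=\sup_{i\in\N}\Gamma_i^{-1/\alpha}$ is standard $\alpha$-Fr\'echet, so that $\mathfrak C_{F,p}^{1/\alpha}\calM_{\alpha}^{\rm is}([0,1])$ has distribution function $x\mapsto\exp(-\mathfrak C_{F,p}x^{-\alpha})$, the two displayed limits are exactly those claimed; the passage from weak convergence to a pointwise identity of distribution functions is valid at continuity points, and here at every $x$, because the limit laws are atomless. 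For the Fr\'echet law this is obvious; for $\mathsf Z_{\alpha,\beta,p}$ one conditions on the Rademachers $\{\varepsilon_i\}$, observes that the supremum defining $\mathsf Z_{\alpha,\beta,p}$ is a.s.\ attained (the summands vanish as the index set runs off to infinity) and that each $S_J:=\sum_{\vvi\in J}[\varepsilon_\vvi]/[\Gamma_\vvi]^{1/\alpha}$ is, given $\{\varepsilon_i\}$, a non-degenerate absolutely continuous function of $\{\Gamma_i\}$, so that $\proba(\mathsf Z_{\alpha,\beta,p}=a)\le\sum_{J\in\calJ_{p,p'}}\proba(S_J=a)=0$ by the countability of $\calJ_{p,p'}$.

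For the last assertion I would use that a positive random variable $W$ is $\alpha$-Fr\'echet precisely when $x\mapsto x^\alpha\bigl(-\log\proba(W\le x)\bigr)$ is a finite positive constant on $(0,\infty)$ (equivalently $\proba(W\le x)=\exp(-\sigma^\alpha x^{-\alpha})$), and show this fails for $\mathsf Z_{\alpha,\beta,p}$ off the exceptional range. If $p=1$ and $\beta\le1/2$, then $p'=1$ and $\mathsf Z_{\alpha,\beta,1}\eqd 2^{-1/\alpha}\Gamma_1^{-1/\alpha}$ by Remark~\ref{rem:thinning}, which is $\alpha$-Fr\'echet with $\proba(\mathsf Z_{\alpha,\beta,1}\le x)=\exp(-\tfrac12 x^{-\alpha})$, so there is nothing to prove. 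If $p\ge2$, then the singleton cluster indexed by $(1,\dots,p)$ always enters the supremum~\eqref{eq:RSM}, so $\mathsf Z_{\alpha,\beta,p}\ge(\varepsilon_1\cdots\varepsilon_p)(\Gamma_1\cdots\Gamma_p)^{-1/\alpha}$ and hence $\proba(\mathsf Z_{\alpha,\beta,p}>x)\ge\tfrac12\proba(\Gamma_1\cdots\Gamma_p<x^{-\alpha})$. A product of $p\ge2$ Poisson arrival times satisfies $\proba(\Gamma_1\cdots\Gamma_p<t)/t\to\infty$ as $t\downarrow0$ (an elementary computation, in fact $\proba(\Gamma_1\cdots\Gamma_p<t)\asymp t(\log(1/t))^{p-1}$), so $x^\alpha\proba(\mathsf Z_{\alpha,\beta,p}>x)\to\infty$ and therefore $x^\alpha(-\log\proba(\mathsf Z_{\alpha,\beta,p}\le x))\to\infty$ as $x\to\infty$, which precludes the constant behaviour.

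The remaining, and most delicate, case is $p=1$, $\beta\in(1/2,1)$ ($p'\ge2$), where the tail of $\mathsf Z_{\alpha,\beta,1}$ carries no logarithmic correction and so by itself does not separate it from an $\alpha$-Fr\'echet law; here I would compare the limits of $x^\alpha(-\log\proba(\mathsf Z_{\alpha,\beta,1}\le x))$ as $x\to\infty$ and as $x\to0^+$. By Poisson thinning, and because the supremum~\eqref{eq:RSM cluster} keeps the $p'$ largest of the positive values $\Gamma_i^{-1/\alpha}$ (those with $\varepsilon_i=1$), one has $\mathsf Z_{\alpha,\beta,1}\eqd\sum_{k=1}^{p'}\bigl(\Gamma^+_{(k)}\bigr)^{-1/\alpha}$ with $\Gamma^+_{(1)}<\dots<\Gamma^+_{(p')}$ the first $p'$ points of a rate-$1/2$ Poisson process on $(0,\infty)$. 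Writing this sum as $\bigl(\Gamma^+_{(1)}\bigr)^{-1/\alpha}+R$, where $R=\sum_{k\ge2}\bigl(\Gamma^+_{(k)}\bigr)^{-1/\alpha}$ has tail $O(x^{-2\alpha})$, an elementary single-big-jump splitting gives $\proba(\mathsf Z_{\alpha,\beta,1}>x)\sim\tfrac12 x^{-\alpha}$, i.e.\ $x^\alpha(-\log\proba(\mathsf Z_{\alpha,\beta,1}\le x))\to\tfrac12$ as $x\to\infty$; whereas the sandwich $p'\bigl(\Gamma^+_{(p')}\bigr)^{-1/\alpha}\le\mathsf Z_{\alpha,\beta,1}\le p'\bigl(\Gamma^+_{(1)}\bigr)^{-1/\alpha}$ together with $\proba\bigl(\Gamma^+_{(m)}\ge a\bigr)=e^{-a/2}\sum_{j=0}^{m-1}(a/2)^j/j!$ (which is $e^{-a/2}$ up to a polynomial factor) forces $x^\alpha(-\log\proba(\mathsf Z_{\alpha,\beta,1}\le x))\to\tfrac12(p')^\alpha$ as $x\to0^+$. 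As $p'\ge2$ and $\alpha>0$ give $\tfrac12\ne\tfrac12(p')^\alpha$, the function is non-constant and $\mathsf Z_{\alpha,\beta,1}$ is not $\alpha$-Fr\'echet. The reduction of the first paragraph is entirely routine; the main obstacle is this last case, where non-Fr\'echetness shows up in no single marginal feature and must be teased out by matching the small-value and tail exponents of $\mathsf Z_{\alpha,\beta,1}$ via its thinned-Poisson representation and standard single-big-jump and Poisson order-statistics estimates.
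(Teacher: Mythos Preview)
Your argument is correct. The paper does not supply a proof for this Corollary: it is stated immediately after Theorem~\ref{thm:RSM} as a direct consequence, using only the identity $\calM_{\alpha,\beta,p}([0,1])=\mathsf Z_{\alpha,\beta,p}$ a.s.\ recorded just before~\eqref{eq:Z}. The non-Fr\'echet claim is asserted in the paper (here and in the discussion following \eqref{eq:RSM cluster}) but never justified.

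Your first paragraph matches the paper's implicit route (continuous mapping applied to Theorem~\ref{thm:RSM}), and you add the small but useful remark that evaluation at the clopen set $[0,1]$ is genuinely continuous in the sup vague topology, together with a clean atomlessness argument.

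The non-Fr\'echet part is content the paper omits, and your approach is sound. For $p\ge 2$ the key estimate $\proba(\Gamma_1\cdots\Gamma_p<t)\asymp t(\log(1/t))^{p-1}$ as $t\downarrow 0$ does hold (for $p=2$ one computes $\proba(\Gamma_1\Gamma_2<t)=\int_0^\infty \min(y,t/y)e^{-y}\,dy\sim \tfrac12 t\log(1/t)$; the general case follows the same pattern and is consistent with the multiple-stable tail in \cite{samorodnitsky89asymptotic}). For $p=1$, $\beta\in(1/2,1)$, your thinning identity $\mathsf Z_{\alpha,\beta,1}\eqd\sum_{k=1}^{p'}(\Gamma_k^+)^{-1/\alpha}$ follows from Remark~\ref{rem:thinning} and the fact that the supremum over $|J|\le p'$ is attained at $J=\{1,\dots,p'\}$; the comparison of $x^\alpha(-\log\proba(\mathsf Z\le x))$ at $x\to\infty$ (value $\tfrac12$) and $x\to 0^+$ (value $\tfrac12(p')^\alpha$) is a neat way to rule out the Fr\'echet form. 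This is a genuinely different and more explicit justification than anything in the paper, which simply asserts the claim.
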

There are three different regimes as illustrated by Theorem \ref{thm:RSM}.
\begin{enumerate}[(1)]
\item  Super-critical 
 regime: $\beta_p>0$. The limit theorem of this regime is a generalization of \citep{samorodnitsky19extremal}, which corresponds to $p=1,\beta_p = \beta$ here, and in this case $\calM_{\alpha,\beta} = \calM_{\alpha,\beta,1}$.  It is worth emphasizing that not only $\calM_{\alpha,\beta,p}$ is not independently scattered with $\beta_p>0$, but also that except the case $p=1,\beta\in(0,1/2]$, $\calM_{\alpha,\beta,p}$ is not even $\alpha$-Fr\'echet, indicating very strong dependence of the original model. 
 The dependence is of an aggregation nature, 
 and a more refined phase transition within this regime regarding the size of the aggregated cluster of extremes
 is explained in Remark \ref{rem:phase transition super-crit}. 
\item Sub-critical 
 regime: $\beta_p<0$. 
 In this regime, the limit random sup-measure is independently scattered, and hence illustrates that the extremes of the original model are of short-range dependence. Note that it is a macroscopic property for the limit random sup-measure to be independently scattered, and in this case we obtain an immediate consequence that the well-known extremal index (e.g.~\citep{kulik20heavy}) of the process is $\theta = 2p!(p-1)! \mathfrak C_{F,p}=  \mathfrak q_{F,p} \mathsf D_{\beta,p}   \in(0,1)$, where $ \mathfrak q_{F,p}$ is given in \eqref{eq:c_Fp} and $\mathsf D_{\beta,p}$ is given in \eqref{eq:shape parameter}. 
 However, limit theorems for random sup-measures are unable to characterize precisely the 
 {\em local-clustering} of extremes. 
 As a more precise elaboration we investigate the tail processes \citep{basrak09regularly,kulik20heavy} in the accompanying paper \citep{bai21tail}.
\item Critical regime: $\beta_p = 0$. In this regime, the limit random sup-measure is again independently scattered, and yet in the limit theorem the normalization sequence $c_n$  is of strictly smaller order than $(n\log^{p-1}n)^{1/\alpha}$ as in the sub-critical regime.  It follows immediately that  the extremal index is zero, suggesting that the local clustering of extremes is unbounded (with infinite size). This again can be made precise by characterizing the tail processes as addressed in \citep{bai21tail}. 

The critical regime is of particular interest
 in the sense that it provides a rare example where at the microscopic level the extremes of the process do not exhibit typical behavior of short-range dependence (which normally has local clustering with finite size), and yet at the macroscopic level they do. 
 See Remark \ref{rem:TP} for more details. 
\end{enumerate}

\begin{Rem}\label{rem:TP}
Combined with the results in the accompanying paper \citep{bai21tail} on local clustering of extremes, we have a complete picture regarding limit extremes at both macroscopic and microscopic levels,  as summarized in Table \ref{table:1} below.
We first describe the tail processes and the limit theorem in the accompanying paper \citep{bai21tail}.
Let $\vv\Theta^*\equiv \{\Theta_k^*\}_{k\in\N_0}$ be a $\{0,1\}$-valued sequence defined as follows: let $\{\vv\tau\topp i\}_{i=1,\dots,p}$ denote i.i.d.~copies of a standard (non-shifted) renewal process, with  the renewal distribution function
 $F$ as in \eqref{eq:F}, and consider
\[
\Theta_{k}^* :=\begin{cases}
1, & \mbox{ if }  k\in\vv\eta,\\
0, & \mbox{ otherwise},
\end{cases}
\quad k=0,1,\dots \qmwith\vv\eta :=\bigcap_{r=1}^p \vv\tau\topp r.
\]
In particular, $\Theta_0^* = 1$ since $0\in\vv\tau\topp r, r=1,\dots,p$ by definition. 
Note that $\vv\eta$ is a non-shifted renewal process ($0\in\vv\eta$), and it is possibly terminating 
 (i.e., $\eta_1 = \infty$ with strictly positive probability; this is the case when the renewal distribution has a mass at infinity).
  The renewal process $\vv\eta$ is terminating if and only if $\beta_p<0$, and in this case with probability one, $\Theta_k^* = 0$ for all $k$ large enough. (More precisely, the terminating rate is shown to be the so-called candidate extremal index of the tail process; see \citep{bai21tail} for details.)
It is proved in \citep{bai21tail} that
for all $m\in\N$ (extending the definition of the stationary process to $\{X_n\}_{n\in\N_0}$ to follow the convention),
\[
\calL\pp{{\frac{X_0}{|X_0|},\dots,\frac{X_m}{|X_0|}}\mmid |X_0|>x} \to
\calL\pp{\varepsilon\Theta_0^*,\dots,\varepsilon\Theta_m^*},
\]
as $x\to\infty$, where $\varepsilon$ is a Rademacher random variable independent of $\{\Theta_{k}^*\}$. The left-hand side above is understood as the conditional law of the finite-dimensional distribution given $|X_0|>x$, and the right hand-side is the law of finite-dimensional distribution  the {\em spectral tail process}, $\{\varepsilon\Theta_k^*\}_{k\in\N_0}$,  of the model $\{X_n\}_{n\in\N_0}$.  We say the (spectral) tail process is terminating, if $\Theta_k^* = 0$ for $k$ large enough with probability one. Now we have the following summary of the phase transition at both levels.

\begin{table}[ht!]
\begin{tabular}{|c|c|c|c|c|c}
\hline
regime 
& tail process & limit random sup-measure\\
& (microscopic) & (macroscopic) \\
\hline
  super-critical, $\beta_p>0$ & non-terminating & $\calM_{\alpha,\beta,p}$ \\
   critical, $\beta_p=0$ &  non-terminating & $\calM^{\rm is}_\alpha$\\
    sub-critical, $\beta_p < 0$ & terminating & $\calM^{\rm is}_\alpha$\\
    \hline
\end{tabular}
\vspace{.2cm}
\caption{Summary of phase transition.}\label{table:1}
\end{table}

\end{Rem}
\begin{Rem}We are unaware of any other examples of phase transition of extremes for a stationary regularly-varying stochastic processes, except the recent result in \citep{durieu22phase} for the so-called heavy-tailed Karlin model with multiplicative noise.  It is worth noticing that therein the random sup-measures arising at the critical regime are different from the ones in both super- and sub-critical regimes therein, and also that the random sup-measures in both super-critical and critical regimes therein are different from $\calM_{\alpha,\beta,p}$ here. 
\end{Rem}

\subsection{Comments}
We conclude the introduction with several remarks regarding our main results, proofs, and follow-up questions. 

\begin{Rem}
We assume $\wb F$ to be regularly varying {\em without a slowly varying function that goes to either zero or infinity}. This constraint is for convenience only, as the proof is quite involved already. We do not expect the slowly varying function will change qualitatively the limiting objects in either super or sub-critical regime. In the critical regime, however, the limit tail process studied in the accompanying paper \cite{bai21tail} may have qualitatively different behaviors   when the limit of slowly varying function is not a finite non-zero constant. 
We shall leave the investigation of the effects of slowly-varying functions at the critical regime in a future work.
\end{Rem}
\begin{Rem}
One could also choose to work with multiple-infinitely-divisible process with other types of tails. If the tail decays like $C |x|^{-\alpha}$ at $|x|\to\infty$ for  $\alpha\in (0,\infty)$ (including our case here with $\alpha\in(0,2)$), the same phase transition shall occur (note that the limits $\calM_{\alpha,\beta,p}$ and $\calM_\alpha^{\rm is}$ are both valid for all $\alpha>0$). If the tail decays like $L(x) x^{-\alpha}$ for some slowly varying function, then this shall introduce some delicacy, but we do not expect an essential difference from our results here. We hence do not pursue such a generalization. A more challenging  problem is to consider other types of heavy tails that are not regularly varying. For $p=1$, 
\citet{chen22extremal,chen21new} recently revealed a very delicate phase transition from this respect: restricted to stable-regenerative model with heavy-tailed distributions (but not necessarily with regularly-varying tails), they proved that with different heaviness of the tails, different random sup-measures may arise in the limit.
\end{Rem}
\begin{Rem}
We provide here some  heuristic explanations  for each case of $\beta_p>0$ and $\beta_p\le 0$  to illustrate the different mechanisms of the formulation of extremes in the limit.

In the super-critical regime $\beta_p>0$,  working with \eqref{eq:series infty}, the key observation is  that      for every $i_1<\cdots<i_p$, the intersection $\bigcap_{r=1}^p\vv\tau\topp{i_r,d_{i_r}}$ scaled properly has a limit law as that of the shifted $\beta_p$-stable-regenerative set $\calR_{\beta_p}$. {\em In words, every collection of $p$ renewals indexed by $i_1<\cdots<i_p$ has a contribution to the limit random sup-measure.} 

For the sub-critical and critical regime $\beta_p\le 0$, $\bigcap_{r=1}^p\vv\tau\topp{i_r,d_{i_r}}$ is eventually an empty set, and the picture is completely changed. In words, for each     fixed $i_1<\cdots<i_p$, the event that $\bigcap_{r=1}^p\vv\tau\topp{i_r,d_{i_r}}\cap\{1,\dots,n\}\ne\emptyset$ becomes a rare one for $n$ large, and essentially we shall identify an appropriately chosen region $\calD^* \subset \{(i_1,\dots,i_p)\in\N^p: i_1<\cdots<i_p\}$, so that on this region a Poisson limit theorem holds for the total number of occurrence of a large collection of rare events (in the exact rare events to be considered, additionally the magnitude of $[\varepsilon_\vvi][\eta_\vvi]$ shall exceed an properly chosen growing threshold, which we omit in this remark). {\em In words, the formulation of extreme values in this regime is due to the occurrence of rare events, and any single collection of $p$ renewals indexed by $i_1<\cdots<i_p$ does not have contribution to the limit.}
\end{Rem}
\begin{Rem}
Our proof for the critical and sub-critical regimes is based on the two-moment method for Poisson approximation \citet{arratia89two}. This step  is the most involved argument in our paper, due to the underlying delicate mechanism of the formulation (more precisely, the region $\calD^*$ in the previous remark) of rare events.  First, there is another phase transition within the sub-critical regime: there is another sub-regime so that one can apply a straightforward application of the two-moment method, and yet for the rest one has to truncate the process first carefully to avoid certain local dependence that may block the method (see Remark \ref{rem:region} for a more detailed explanation; at a high level, the delicacy is similar to extremes for random energy model as explained in \citep{kistler15derrida}, a classical example for extremes of Gaussian random variables with strong dependence). Second, the critical regime also differs slightly from the sub-critical regime  (see Remark \ref{rem:difference}).

\end{Rem}
\begin{Rem}Our results,
 actually, suggest a full point-process convergence of
\[
\summ k1n\ddelta{X_k/c_n,k/n},
\]
as in \citep{basrak09regularly}.  
Such a full point-process convergence has many other consequences, including the convergence of the random sup-measures as in Theorem \ref{thm:RSM}. Another notable consequence would be a functional central limit theorem for the partial-sum process. 

However, the widely applied classical method for proving such a full point-process convergence \citep{davis95point,davis98sample,basrak09regularly,kulik20heavy}   does not apply here. This is closely related to the fact that for our model at sub-critical regime, the candidate extremal index $\vartheta = \mathfrak q_{F,p}$ and extremal index $\theta = \mathfrak q_{F,p}\mathsf D_{\beta,p}$  do not equal, and yet the classical approach requires necessarily $\vartheta = \theta$. The fact and related background on why/how the two indices are not the same is explained in full details in the accompanying paper 
\citep[Section 1.4]{bai21tail}.

We expect the two-moment method  can be adapted to prove the full point-process 
convergence,
 and this will be addressed in another paper. 
\end{Rem}
{\em The paper is organized as follows.} Section \ref{sec:background} provides related backgrounds, notably on multiple-stable processes and renewal processes. For Theorem \ref{thm:RSM}, the super-critical, sub-critical and critical regimes are proved in Sections \ref{sec:RSM}, \ref{sec:sub-critical} and \ref{sec:critical}, respectively.

\section{Preliminary results}\label{sec:background}
\subsection{Intersections of renewal processes with infinite mean}
Throughout, our references on discrete-time renewal processes are  \citet[Appendix A.5]{giacomin07random} and \citet[Section 8.7.1]{bingham87regular}.
Besides the notations and properties of renewal processes in Section \ref{sec:1}, we shall also use the renewal mass function of a standard renewal process (no shift) with renewal distribution $F$:
\[
u(k) := \proba(k\in \vv\tau),\ k\in\N_0.
\]
Note that $u(0) = 1$. It is well-known (e.g., \cite[Theorem 8.7.3]{bingham87regular}) that the relation 
\equh\label{eq:u(n)}
u(n)\sim \frac{n^{\beta-1}}{\mathsf C_F\Gamma(\beta)\Gamma(1-\beta)},  \mmas n\to\infty,
\eque
implies 
the assumption \eqref{eq:F}, and furthermore 
under the assumption \eqref{eq:Doney} the two are equivalent (cf.~\citep{doney97onesided}). 

Next, we recall some properties of the intersected   process 
of $p$ i.i.d.~copies of $\vv\tau$,  
$\vv\eta 
:=
 \bigcap_{r=1}^p\vv\tau\topp r = \{\eta_0=0,\eta_1,\eta_2,\dots\}$.    This is again a (non-shifted) renewal process, although  when $\beta_p<0$, it is  terminating, namely, $\eta_1 = \infty$ with strictly positive probability
\equh\label{eq:c_Fp}
\mathfrak q_{F,p} = \proba(\eta_1 = \infty) = \limn \wb F_p(n)\in(0,1).
\eque
Here and below  $\wb F_p(x) = 1-F_p(x)$ with $F_p$ denoting the cumulative distribution function of $\eta_1$.  Recall the renewal process $\vv\eta $ is said to be null-recurrent, if  $\proba(\eta_1<\infty)=1$ and $\esp \eta_1=\infty$.

\begin{Lem}
If $\beta_p\ge 0$, then the renewal process $\vv\eta$ is null-recurrent and 
\[
\wb F_p(n) \sim \begin{cases}
\displaystyle n^{-\beta_p}\frac{(\mathsf C_F\Gamma(\beta)\Gamma(1-\beta))^p}{\Gamma(\beta_p)(\Gamma(1-\beta_p))}, & \mbox{ if } \beta_p>0, \\\\
\displaystyle
\frac{(\mathsf C_F\Gamma(\beta)\Gamma(1-\beta))^p}{\log n}, & \mbox{ if } \beta_p = 0.
\end{cases}
\]
If $\beta_p<0$, then the renewal process $\vv\eta$ is terminating with   $P(\eta_1=\infty)=\mathfrak q_{F,p}$ in \eqref{eq:c_Fp}.
\end{Lem}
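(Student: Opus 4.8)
The plan is to compute the tail of $\eta_1$, the first renewal time of the intersected renewal process $\vv\eta=\bigcap_{r=1}^p\vv\tau\topp r$, by relating $\wb F_p$ to the renewal mass function $u$ of a single copy of $\vv\tau$. The starting point is the elementary identity that, for i.i.d.\ copies $\vv\tau\topp 1,\dots,\vv\tau\topp p$, one has $\proba(n\in\vv\eta) = \proba(n\in\vv\tau\topp 1,\dots,n\in\vv\tau\topp p) = u(n)^p$ for every $n\in\N_0$, since the event $\{n\in\vv\tau\topp r\}$ depends only on the $r$-th renewal process. On the other hand, $\vv\eta$ is itself a (possibly terminating) renewal process with increment law $F_p$, so its own renewal mass function $u_p(n):=\proba(n\in\vv\eta)$ satisfies the renewal equation $u_p = \sum_{j\ge 0} F_p^{*j}$; equivalently, in terms of generating functions, $\sum_{n\ge 0}u_p(n) s^n = \bigl(1 - \sum_{n\ge 1} f_p(n)s^n\bigr)^{-1}$ where $f_p(n) = F_p(n)-F_p(n-1)$. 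Combining these gives $\sum_{n\ge 0} u(n)^p s^n = \bigl(1-\widehat f_p(s)\bigr)^{-1}$, which expresses the Laplace/generating transform of $f_p$ (hence of $\wb F_p$) in terms of the known asymptotics \eqref{eq:u(n)} of $u(n)$.

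First I would feed the asymptotics $u(n)\sim n^{\beta-1}/(\mathsf C_F\Gamma(\beta)\Gamma(1-\beta))$ into the left-hand generating function. Since $u(n)^p\sim c^p n^{p(\beta-1)} = c^p n^{-(1-\beta_p)}$ with $c = (\mathsf C_F\Gamma(\beta)\Gamma(1-\beta))^{-1}$ and $1-\beta_p = p(1-\beta)$, the behavior of $\sum_n u(n)^p s^n$ as $s\uparrow 1$ is governed by a Tauberian/Abelian argument (e.g.\ \citet[Ch.~8]{bingham87regular}): when $\beta_p>0$ the exponent $1-\beta_p\in(0,1)$ and $\sum_n u(n)^p s^n\sim c^p\Gamma(\beta_p)(1-s)^{-\beta_p}$; when $\beta_p=0$ the sum $\sum_n u(n)^p$ diverges logarithmically and $\sum_n u(n)^p s^n\sim c^p\log\frac1{1-s}$; when $\beta_p<0$ the sum $\sum_n u(n)^p<\infty$ converges, so $\sum_n u(n)^p s^n\to \sum_n u(n)^p<\infty$ as $s\uparrow1$. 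Then I would invert the relation $\widehat f_p(s) = 1 - \bigl(\sum_n u(n)^p s^n\bigr)^{-1}$ to read off the behavior of $1-\widehat f_p(s)$ near $s=1$, and apply a Tauberian theorem once more to translate this into the tail asymptotics of $\wb F_p(n) = \proba(\eta_1>n)$. In the $\beta_p>0$ case this yields $\wb F_p(n)\sim n^{-\beta_p}c^p/(\Gamma(\beta_p)\Gamma(1-\beta_p))$; in the $\beta_p=0$ case $1-\widehat f_p(s)\sim 1/(c^p\log\frac1{1-s})$, and since $\widehat f_p(1)=1$ this forces $\proba(\eta_1<\infty)=1$ (null recurrence) with $\wb F_p(n)\sim c^p/\log n$; in the $\beta_p<0$ case $\widehat f_p(1) = 1 - (\sum_n u(n)^p)^{-1}<1$, giving the terminating probability $\mathfrak q_{F,p} = \proba(\eta_1=\infty) = (\sum_n u(n)^p)^{-1}\in(0,1)$, which matches \eqref{eq:c_Fp} (and one checks $\lim_n\wb F_p(n)=\mathfrak q_{F,p}$ directly). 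Null recurrence in the cases $\beta_p\ge 0$ follows because $\esp\eta_1 = \sum_{n\ge0}\wb F_p(n)=+\infty$ (the tail is not summable) while $\proba(\eta_1<\infty)=1$.

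The main obstacle will be making the Tauberian inversion clean, particularly at the boundary $\beta_p=0$: there the relevant slowly varying behavior is logarithmic rather than a pure power, so one must be careful that the de Haan / monotone-density type conditions needed to invert $\sum_n u(n)^p s^n\sim c^p\log\frac1{1-s}$ back to $\wb F_p(n)\sim c^p/\log n$ actually hold — this is where the technical assumption \eqref{eq:Doney} (equivalently the equivalence of \eqref{eq:F} and \eqref{eq:u(n)}, and the one-sided local limit results of \citet{doney97onesided}) is used to guarantee the requisite regularity of $u$ and hence of $f_p$. A secondary point to handle carefully is justifying termwise asymptotic equivalence of generating functions from termwise asymptotics of coefficients (an Abelian step), and conversely extracting coefficient asymptotics from generating-function asymptotics (the Tauberian step, which requires ultimate monotonicity of $\wb F_p$ — automatic since it is a tail). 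I expect all the constants to bookkeep exactly as stated once the factor $(\mathsf C_F\Gamma(\beta)\Gamma(1-\beta))^p = c^{-p}$ is tracked through both Tauberian passes.
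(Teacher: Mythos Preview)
Your proposal is correct and follows essentially the same approach as the paper: both start from the key identity $u_p(n)=\proba(n\in\vv\eta)=u(n)^p$ and then convert the asymptotics of $u_p$ into those of $\wb F_p$ via Tauberian arguments. The only difference is packaging: the paper invokes \cite[Theorem 8.7.3]{bingham87regular} as a black box (which already encodes the Abelian/Tauberian passage you spell out with generating functions), and for $\beta_p<0$ it phrases the computation as $\esp G=\sum_n u(n)^p=1/\mathfrak q_{F,p}$ with $G$ the geometric number of renewals, which is exactly your formula $\mathfrak q_{F,p}=(\sum_n u(n)^p)^{-1}$. Your worry about needing \eqref{eq:Doney} at $\beta_p=0$ is slightly misplaced---that assumption is used earlier to obtain \eqref{eq:u(n)} from \eqref{eq:F}, but once $u(n)^p\sim c^p n^{\beta_p-1}$ is in hand, the Tauberian step back to $\wb F_p$ needs only the monotonicity of $\wb F_p$, which is automatic.
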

\begin{proof}
Let $u_p(n)$ denote the renewal mass function of $\vv\eta$. Clearly by independence, 
\equh\label{eq:u_p}
u_p(n) = u(n)^p\sim \frac{n^{\beta_p-1}}{(\mathsf C_F\Gamma(\beta)\Gamma(1-\beta))^p},  \mmas n\to\infty,
\eque
regardless of the value of $\beta_p$ (and $u_p(0) = 1$ always). 
In the case $\beta_p> 0$, and necessarily $\beta_p<1$, \eqref{eq:u_p} implies  
\[
\wb F_p(n)\sim n^{-\beta_p}\frac{\mathsf (\mathsf C_F\Gamma(\beta)\Gamma(1-\beta))^p}{\Gamma(\beta_p)(\Gamma(1-\beta_p))}.
\]
This follows again from \cite[Theorem 8.7.3]{bingham87regular} and  was already used in \citep[Lemma A.1]{samorodnitsky19extremal}. 
The critical regime $\beta_p = 0$ is similar. In this case, 
\[
\summ k1n u_p(k)\sim \frac1{(\mathsf C_F\Gamma( \beta)\Gamma(1-\beta))^p}\log n, 
\]
and hence by \citep[Theorem 8.7.3]{bingham87regular}, it follows that
\equh\label{eq:wb F_p crit}
\wb F_p(n) \sim \frac{(\mathsf C_F\Gamma(\beta)\Gamma(1-\beta))^p}{\log n}.
\eque
The null-recurrence in these two cases readily follows.

In the case $\beta_p<0$, consider
\[
G =  \sif n0\inddd{n\in\vv\eta}.
\]
The conclusion follows since 
\[
\esp G = \sif n0u_p(n) = \sif n0 u(n)^p<\infty.
\]
(In fact, by the renewal property, $G$ is a geometric random variable    satisfying  $\proba(G = k) = \mathfrak q_{F,p}(1-\mathfrak q_{F,p})^{k-1}, k\in\N$, so that  $\esp G = 1/\mathfrak q_{F,p}$.) 
\end{proof}

\subsection{A representation of multiple-stable processes}
We shall work with the following equivalent representation of finite-dimensional distributions of $\{X_k\}_{k=1,\dots,n}$ in \eqref{eq:series infty p}:
\equh\label{eq:p>=1}
\ccbb{X_{k}}_{k=1,\dots,n}\eqd \ccbb{X_{n,k}}_{k=1,\dots,n}:= \ccbb{w_n^{p/\alpha}\sum_{0<i_1<\cdots<i_p}\frac{[\varepsilon_\vvi]}{[\Gamma_\vvi]^{1/\alpha}}\inddd{k\in \bigcap_{k=1}^p R_{n,i_k}}}_{k=1,\dots,n},
\eque
where $w_n$ is as in \eqref{eq:w_n}, $\{\varepsilon_i\}_{i\in\N}$ are i.i.d.~Rademacher random variables, $\{\Gamma_i\}_{i\in\N}$ are consecutive ordered points from a standard Poisson process on $[0,\infty)$, 
and $\{R_{n,i}\}_{i\in\N}$ are i.i.d.~copies of certain random closed set $R_n$, all three families are independent.
Here $R_n$ is described as follows. 
Suppose $\vv\tau^*$ is a shifted renewal process with the stationary shift distribution $\pi$ and renewal distribution $F$ defined on a certain measurable space  with respect to certain infinite measure $\mu^*$ (since $\pi$ is an infinite measure). Then, one can introduce a probability measure $\mu_n$ on the same measurable space via
\[
\frac{d\mu_n}{d\mu^*} = \frac{\inddd{\vv\tau^*\cap\{1,\dots,n\} \ne\emptyset}}{\mu^*(\{\vv\tau^*:\vv\tau^*\cap\{1,\dots,n\}\ne\emptyset\})} = \frac{\inddd{\vv\tau^*\cap\{1,\dots,n\} \ne\emptyset}}{w_n}.
\]
Then, the law of $R_n$ is the one induced by $\vv\tau^*$ with respect to the probability measure $\mu_n$. 
Moreover, it is immediately verified that
 \begin{enumerate}[(i)]
 \item $\proba(k\in R_n) = 1/w_n, k=1,\dots,n$ (shift invariance).
 \item $\proba(\min(R_n\cap\{k+1,k+2,\ldots\} ) \le k+ j\mid k\in R_n) = F(j)$ (Markov/renewal property). 
 \end{enumerate}
 We refer to \citep{owada15functional,samorodnitsky19extremal} for details. 
Note that we write $\{X_{n,k}\}_{k=1,\dots,n}$ to emphasize that the last expression of \eqref{eq:p>=1}  depends on $n\in\N$, although they correspond consistently to the same process $\{X_k\}_{k\in\N}$.

\section{Convergence for random sup-measures,  $\beta_p>0$}\label{sec:RSM}

Throughout $C$   denotes a generic positive constant whose value may change from line to line.

\subsection{Alternative representations of random sup-measures}\label{sec:agg clusters}
In this subsection, we first provide an alternative representation of the limit random sup-measure $\calM_{\alpha,\beta,p}$. This alternative representation will  not actually  be used in the proof of the limit theorem. It is, however, instructive for  understanding the arise of $\calM_{\alpha,\beta,p}$ in the limit theorem from discrete model $\{X_k\}_{k\in\N}$, which is  also related to the original definition of the limit random sup-measure in \cite{samorodnitsky19extremal} when $p=1$.
Introduce the following set-indexed process: for $G\in \cal{G}$,
\equh\label{eq:RSM USC}
\calM'_{\alpha,\beta,p}(G) :=\sup_{t\in G}\zeta_{\alpha,\beta,p}(t) \qmwith \zeta_{\alpha,\beta,p}(t) :=  \sum_{0<i_1<\cdots<i_p}\frac{[\varepsilon_\vvi]}{[\Gamma_\vvi]^{1/\alpha}}\inddd{t\in\bigcap_{r=1}^p\calR_{\beta,i_r}},\ t\in[0,1] ,
\eque
where the  consecutive  Poisson points $\{\Gamma_i\}_{i\in\N}$,  i.i.d. Rademacher sequences
$\{\varepsilon_i\}_{i\in\N}$,
 and i.i.d.\ shifted stable regenerative sets $\{\calR_{\beta,i}\}_{i\in\N}$ are as described in  Section \ref{sec:stable reg intro}.  The function $ \zeta_{\alpha,\beta,p}(t)$ may be viewed as a continuous-time analog of the representation    \eqref{eq:p>=1} (or \eqref{eq:series infty}) for $\{X_k\}_{k\in\N}$. Note that the measurability of $\calM_{\alpha,\beta,p}'(G)$ is not immediately clear by the definition. 
 
\begin{Prop}\label{pro:RSM}~
\begin{enumerate}[(i)]
\item For  $\calM_{\alpha,\beta,p}$ defined in \eqref{eq:RSM}, we have for every $G\in\calG$, almost surely
\[
\calM_{\alpha,\beta,p}(G)>0.
\]
\item Assume in addition that the underlying probability space is complete. Then for  $\calM'_{\alpha,\beta,p}$ defined in \eqref{eq:RSM USC},
we have for every $G\in \calG$, almost surely  
 \[
\calM_{\alpha,\beta,p}(G) = \calM'_{\alpha,\beta,p}(G). 
 \]
 \end{enumerate}
\end{Prop}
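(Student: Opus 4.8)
The plan is to prove the two parts in order, since part (i) feeds into part (ii).

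\medskip

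\emph{Part (i): positivity of $\calM_{\alpha,\beta,p}(G)$.} Fix $G\in\calG$, nonempty. Recall from \eqref{eq:RSM} that $\calM_{\alpha,\beta,p}(G) = \sup_{J\in\calJ_{p,p'}}\calM_{\alpha,\beta,J}(G)$, and that the index set $\calJ_{p,p'}$ contains in particular all $J(\vvc)$ with $\vvc\in\calD_q$ and $q$ as large as $p'$. The idea is: among the countably many individual clusters indexed by $\vvi\in\calD_p$, with probability one there is some $\vvi$ with $[\varepsilon_\vvi]>0$ (i.e.\ an even number of the $\varepsilon_{i_r}$ equal $-1$) and with $\calR_{\beta,\vvi}\cap G\ne\emptyset$. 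Indeed, since $\proba(\calR_{\beta,i}\cap G\ne\emptyset)>0$ for each $i$ (the randomly shifted stable regenerative set hits any fixed nonempty open set with positive probability — this uses stationarity and that $\calR_\beta$ is a.s.\ an unbounded set intersecting $[0,1]$, cf.\ \cite[Corollary B.3]{samorodnitsky19extremal} for $G=(0,1)$, and a scaling/stationarity argument for general $G$), independence across $i$ and a Borel--Cantelli / second-Borel--Cantelli type argument over increasing blocks of indices yields infinitely many $\vvi\in\calD_p$ with $\calR_{\beta,\vvi}\cap G\ne\emptyset$; among these, the signs $[\varepsilon_\vvi]$ are not deterministic, so a.s.\ at least one is $+1$. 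For such $\vvi$, taking $J=\{\vvi\}$ — which lies in $\calJ_{p,p'}$ when $p=p'$ — or more generally observing that the single-index contribution $[\varepsilon_\vvi]/[\Gamma_\vvi]^{1/\alpha}>0$ appears as one summand, we need to be slightly careful: $\calM_{\alpha,\beta,J}$ involves a \emph{sum} over $\vvi\in J$, not individual terms. The cleanest route is to use $J = J(\vvi)$ with $\vvc=\vvi\in\calD_p$, so $J(\vvi)=\{\vvi\}$ is a singleton, $\calR_{\beta,J}=\calR_{\beta,\vvi}$, and $\calM_{\alpha,\beta,J}(G) = [\varepsilon_\vvi]/[\Gamma_\vvi]^{1/\alpha}$ whenever $\calR_{\beta,\vvi}\cap G\ne\emptyset$. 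Since $p\ge 1$ always gives $p'\ge p$, singletons $J(\vvi)$, $\vvi\in\calD_p$, always belong to $\calJ_{p,p'}$, so $\calM_{\alpha,\beta,p}(G)\ge \sup_{\vvi\in\calD_p: \calR_{\beta,\vvi}\cap G\ne\emptyset} [\varepsilon_\vvi]/[\Gamma_\vvi]^{1/\alpha} > 0$ a.s.\ by the above.

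\medskip

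\emph{Part (ii): $\calM_{\alpha,\beta,p}(G) = \calM'_{\alpha,\beta,p}(G)$ a.s.} The two objects are: $\calM_{\alpha,\beta,p}(G) = \sup_J \sum_{\vvi\in J}[\varepsilon_\vvi]/[\Gamma_\vvi]^{1/\alpha}\cdot\ind\{\calR_{\beta,J}\cap G\ne\emptyset\}$ over $J\in\calJ_{p,p'}$, while $\calM'_{\alpha,\beta,p}(G) = \sup_{t\in G}\zeta_{\alpha,\beta,p}(t)$ with $\zeta_{\alpha,\beta,p}(t) = \sum_{\vvi\in\calD_p}[\varepsilon_\vvi]/[\Gamma_\vvi]^{1/\alpha}\cdot\ind\{t\in\bigcap_{r=1}^p\calR_{\beta,i_r}\} = \sum_{\vvi\in\calD_p}[\varepsilon_\vvi]/[\Gamma_\vvi]^{1/\alpha}\cdot\ind\{t\in\calR_{\beta,\vvi}\}$. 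First I would show $\calM'_{\alpha,\beta,p}(G)\le \calM_{\alpha,\beta,p}(G)$: for fixed $t\in G$, let $K(t) := \{\vvi\in\calD_p: t\in\calR_{\beta,\vvi}\}$; I claim $K(t)$ is a.s.\ of the form $J(\vvc)$ for some $\vvc\in\calD_q$ with $q\le p'$. The key structural fact is that $t\in\calR_{\beta,i_1}\cap\cdots\cap\calR_{\beta,i_p}$ for distinct $i_1,\dots,i_p$ happens iff $t$ belongs to each of the sets $\calR_{\beta,i}$ for $i$ ranging over $C(t):=\{i\in\N: t\in\calR_{\beta,i}\}$, and then $K(t) = \{\vvi\in\calD_p:\vvi\subset C(t)\} = J(C(t))$ when $|C(t)|\ge p$ (and $K(t)=\emptyset$ otherwise). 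Moreover $|C(t)| = q$ forces $q\le p'$ a.s.\ because the joint intersection $\bigcap_{i\in C(t)}\calR_{\beta,i}$ of $q$ independent copies is nonempty (it contains $t$), and by \eqref{eq:p'} an intersection of $q>p'$ independent copies is a.s.\ empty — here one needs to be careful that this must hold simultaneously for (uncountably many) $t$, which is handled by noting that $C(t)$, as $t$ ranges over $[0,1]$, takes only countably many distinct values (the closed sets $\calR_{\beta,i}$ partition $[0,1]$ into at most countably many "intersection patterns" — more precisely, for each finite $S\subset\N$ the set $\{t: C(t)\supseteq S\} = \bigcap_{i\in S}\calR_{\beta,i}$, and if $|S|>p'$ this is a.s.\ empty, so a.s.\ $|C(t)|\le p'$ for all $t$ simultaneously, taking a countable union over finite $S$). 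Hence a.s., for every $t\in G$, $\zeta_{\alpha,\beta,p}(t) = \sum_{\vvi\in J(C(t))}[\varepsilon_\vvi]/[\Gamma_\vvi]^{1/\alpha}$ with $J(C(t))\in\calJ_{p,p'}$ and $t\in\calR_{\beta,J(C(t))}\cap G$, so $\zeta_{\alpha,\beta,p}(t) = \calM_{\alpha,\beta,J(C(t))}(G)\le\calM_{\alpha,\beta,p}(G)$; taking sup over $t\in G$ gives one inequality. For the reverse, fix $J = J(\vvc)\in\calJ_{p,p'}$ with $\calR_{\beta,J}\cap G\ne\emptyset$ and pick $t\in\calR_{\beta,J}\cap G = \bigcap_{i\in\{c_1,\dots,c_q\}}\calR_{\beta,i}\cap G$; then $C(t)\supseteq\{c_1,\dots,c_q\}$, hence $J(\vvc)\subseteq J(C(t)) = K(t)$, and since the extra indices $\vvi\in K(t)\setminus J(\vvc)$ also satisfy $t\in\calR_{\beta,\vvi}$... this is where it does not immediately close, because $\zeta_{\alpha,\beta,p}(t)$ sums over \emph{all} of $K(t)$, which could exceed $\calM_{\alpha,\beta,J}(G)$. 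The resolution is that the sup defining $\calM_{\alpha,\beta,p}$ is over \emph{all} $J\in\calJ_{p,p'}$, so in particular it includes $J = K(t) = J(C(t))$, which is exactly $\zeta_{\alpha,\beta,p}(t)$; thus $\calM_{\alpha,\beta,J}(G)\le\calM_{\alpha,\beta,p}(G)$ needs reexamination — actually the correct statement is: for each $J\in\calJ_{p,p'}$ realizing the sup, $\calM_{\alpha,\beta,J}(G) = \sum_{\vvi\in J}[\varepsilon_\vvi]/[\Gamma_\vvi]^{1/\alpha}$, and since by \eqref{eq:p'} the set $\calR_{\beta,J}$ is a nonempty $\beta_{|\vvc|}$-regenerative set which (being a nontrivial random closed set with no isolated points, hitting $[0,1]$) contains points of $G$ for any open $G$ meeting it, we can pick $t$ in its interior-of-intersection-with-$G$; then $C(t) = \{c_1,\dots,c_q\}$ \emph{exactly} for a.e.\ such $t$ because belonging to $\calR_{\beta,i}$ for $i\notin\vvc$ is a null event for Lebesgue-a.e.\ $t$ in a fixed set and independence. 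So a.s.\ $\calM_{\alpha,\beta,J}(G)\le\sup_{t\in G}\zeta_{\alpha,\beta,p}(t) = \calM'_{\alpha,\beta,p}(G)$, giving the reverse inequality and hence equality; the completeness assumption on the probability space is needed so that $\calM'_{\alpha,\beta,p}(G)$, which a priori may only be measurable up to a null set because of the uncountable supremum, is genuinely a random variable.

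\medskip

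I expect the main obstacle to be the measurability and "simultaneously in $t$" subtleties in Part (ii): controlling the supremum over the uncountable index set $t\in G$ requires showing the map $t\mapsto\zeta_{\alpha,\beta,p}(t)$ is, up to a null set, constant on each of countably many random pieces (the "intersection patterns" $\{t: C(t)=S\}$), and that on each piece $\zeta$ equals one of the $\calM_{\alpha,\beta,J}$; the delicate point is that the constancy in $t$ holds only Lebesgue-a.e.\ (the sets $\calR_{\beta,i}$ have Lebesgue measure zero), so one must verify that discarding a Lebesgue-null set of $t$'s does not change the supremum — this is true because $\calR_{\beta,J}\cap G$, when nonempty, is a set with positive "dimension" (it is itself a stable-regenerative set scaled into $G$) hence uncountable and not Lebesgue-null-removable in the relevant sense; making this rigorous is where most of the work lies, and it parallels the arguments in \cite[Appendix B]{samorodnitsky19extremal} for the case $p=1$, which I would follow closely.
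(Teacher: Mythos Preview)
Your Part (i) and the $\le$ direction of Part (ii) match the paper's proof essentially exactly: the paper also restricts to singletons $J=\{\vvi\}$ over disjoint blocks of indices for positivity, and also introduces the random index set $J_t=\{\vvi\in\calD_p:t\in\calR_{\beta,\vvi}\}$ (your $K(t)$), noting it equals $J(C(t))$ with $|C(t)|\le p'$ simultaneously in $t$ on a countable-union null set.

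The gap is in the $\ge$ direction. Your proposed argument --- that for the maximizing $J(\hat\vvc)$ one can find $t\in\calR_{\beta,J(\hat\vvc)}\cap G$ with $C(t)=\hat\vvc$ exactly, because ``belonging to $\calR_{\beta,i}$ for $i\notin\hat\vvc$ is a null event for Lebesgue-a.e.\ $t$'' --- does not work: the set $\calR_{\beta,J(\hat\vvc)}$ itself has Lebesgue measure zero (it is a $\beta_q$-stable-regenerative set with $\beta_q<1$), so Lebesgue-a.e.\ statements give no information about points in it. You correctly flag this as the obstacle, but your proposed fix (``uncountable, not Lebesgue-null-removable'') is not the right invariant.

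The paper resolves this by a Hausdorff dimension comparison. Arguing by contradiction: if every $t\in\calR_{\beta,J(\hat\vvc)}\cap G$ has $C(t)\supsetneq\hat\vvc$, then
\[
\calR_{\beta,J(\hat\vvc)}\cap G \subset \bigcup_{\substack{\vvc'\in\calD_{q+1}\\\hat\vvc\subset\vvc'}}\calR_{\beta,J(\vvc')}.
\]
The left side has Hausdorff dimension $\beta_q$ a.s., while each set on the right has Hausdorff dimension $\beta_{q+1}<\beta_q$, and by countable stability of Hausdorff dimension so does the union. This is a contradiction. This dimension argument is the missing idea; it is not in \cite{samorodnitsky19extremal}, so following their Appendix B alone would not close the gap.
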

\begin{proof}[Proof of Proposition \ref{pro:RSM}] 
We first prove the first part.  The case of $G=\emptyset$ is trivial since both sides of \eqref{eq:RSM} are $-\infty$. 
From now on, fix an nonempty $G\in \calG$.  
 First, we show that $\calM_{\alpha,\beta,p}(G)>  0$ almost surely. To see this, note that 
\begin{align*}
&\proba\left( \sup_{J\in\calJ_{p,p'}}\calM_{\alpha,\beta,J}(G) \le 0\right)\le\proba\left(\calM_{\alpha,\beta,J}(G) \le 0, \ J=\{(1,\ldots,p)\},\{(p+1,\ldots,2p)\},\ldots \right)\\
= & \lim_{n\rightarrow\infty}\left[\proba\left(\calR_{\beta,{(1,\ldots,p)}}\cap G\ne\emptyset\right)\proba([\varepsilon_{(1,\ldots,p)}]=-1) +\proba\left(\calR_{\beta,{(1,\ldots,p)}}\cap G=\emptyset\right)\right]^n.
\end{align*} 
where we used independence in the last equality. The   limit above is zero since $\proba([\varepsilon_{(1,\ldots,p)}]=-1)=1/2$ and $\proba\left(\calR_{\beta,{(1,\ldots,p)}}\cap G\neq \emptyset\right)\ge \proba\left(B_{1-\beta_p,1} \in G\right)>0$, recalling $\calR_{\beta,{(1,\ldots,p)}}  \eqd \wt\calR_{\beta_p}+B_{1-\beta_p,1}$ with $\wt\calR_{\beta_p}$ and $B_{1-\beta_p,1}$   as described in Section \ref{sec:stable reg intro}.

Now we prove the second part. We start by showing that on an event with probability one,   $\calM'_{\alpha,\beta,p}(G)\le \calM_{\alpha,\beta,p}(G)$. (Note that a priori the measurability of $\calM_{\alpha,\beta,p}'$ is not clear yet.)
Introduce the random index set $J_t:=\{\vvi\in \calD_p: t\in \calR_{\beta,\vvi}\}$. Then for $t\in G$, with the notation in \eqref{eq:aggregated cluster p=1}, we have   
\[
 \zeta_{\alpha,\beta,p}(t)= \sum_{\vvi \in J_t}  
 \frac{[\varepsilon_\vvi]}{[\Gamma_\vvi]^{1/\alpha}}= \sum_{\vvi \in J_t}  
 \frac{[\varepsilon_\vvi]}{[\Gamma_\vvi]^{1/\alpha}} \inddd{\calR_{\beta,J_t}\cap G\ne\emptyset}=\calM_{\alpha,\beta,J_t}(G),
\]
where we understand a summation over $J_t$ as $0$ if $J_t=\emptyset$.
Then 
\equh\label{eq:J_t in Jpp'}
\proba(\Omega_0) = 1 \qmwith \Omega_0 := \ccbb{\calR_{\beta,J} = \emptyset \mfa J\subset\N, |J|>p'},
\eque by the choice of $p'$ in \eqref{eq:p'}, and it follows that for all $\omega\in\Omega_0$, $J_t(\omega)\in\calJ_{p,p'}$ for all $t\in G$ such that $J_t \ne\emptyset$. Also, let $\Omega_1 = \{\calM_{\alpha,\beta,p}(G)>0\}$. So we have 
\[
\calM_{\alpha,\beta,p}'(G) = \sup_{t\in G}\zeta_{\alpha,\beta,p}(t) = \pp{\sup_{t\in G: J_t\ne\emptyset}\calM_{\alpha,\beta,J_t}(G)}_+\le \calM_{\alpha,\beta,p}(G) \quad \mbox{ on $\Omega_0\cap\Omega_1$.}
\] 

Next, we show that $\calM_{\alpha,\beta,p}'(G)\ge \calM_{\alpha,\beta,p}(G)$ on the event of probability one.   First, since $[\Gamma_\vvi]^{-1/\alpha}\rightarrow 0$ when $\max(\vvi)\rightarrow \infty$ almost surely, it 
follows
that the supremum over $J\in\calJ_{p,p'}$ in $\calM_{\alpha,\beta,p}(G)$ is attainable almost surely. Let $\Omega_2$ denote this event. So on $\Omega_0\cap\Omega_1\cap\Omega_2$,
\[
\calM_{\alpha,\beta,p}(G)=  \calM_{\alpha,\beta,J(\what \vvc)}(G)>0
\]
 for some random $\what \vvc\in \calD_q$  with random  $q\in\{p,p+1,\dots,p'\}$, and in particular $\calR_{\beta,J(\what\vvc)}\cap G\ne\emptyset$.
It suffices to show that 
\[J(\what\vvc)=J_t\text{ for some }t\in \calR_{\beta,J(\what\vvc)}\cap G \quad \mbox{ on $\Omega_0\cap\Omega_1\cap\Omega_2$.}
\]  
(In words, there is a location $t\in \calR_{\beta,J(\what\vvc)}\cap G$   exactly covered by the aggregated cluster corresponding to $J(\what{\vv c})$.) 
Note that on the event $\Omega_0\cap\Omega_1\cap\Omega_2$, for all $t\in \calR_{\beta,J(\what\vvc)}\cap G$, necessarily $ J(\what\vvc)\subset  J_t$.  It remains to show the inclusion is in fact an equality on the event mentioned above.   We now prove   by contradiction: suppose on a sub-event say $E$ of $\Omega_0\cap\Omega_1\cap\Omega_2$ with  strictly positive probability, we have $J(\what\vvc)\subsetneq J_t$ for all $t\in\calR_{\beta,J(\what\vvc)}\cap G$  (in words, every $t$ is covered by some aggregated cluster indexed by $J(\vv c)$ such that $\vv c\supsetneq \what{\vv c}$). Recall that in view of \eqref{eq:J_t in Jpp'},  $q\le p'-1$ on the event $E$. It then
 follows   that on the event $E$, 
\[
\emptyset\neq\calR_{\beta,J(\what\vvc)}\cap G \subset \left(\bigcup_{ t\in\calR_{\beta,J(\what\vvc)}\cap G} \calR_{\beta,J_t}\right)\cap G \subset \pp{
 \bigcup_{\substack{\vvc'\in \calD_{q+1}\\\what\vvc\subset\vvc'}} 
\calR_{\beta,J(\vvc')}}\cap G.
\]
 But 
 almost everywhere on the sub-event of $E$,  $\calR_{\beta,J(\what\vvc)}\cap G$  has Hausdorff dimension $\beta_q$, while each   $\calR_{\beta,J(\vvc')}$ in the last expression above with $\vvc'\in\calD_{q+1}$ has Hausdorff dimension $\beta_{q+1}<\beta_q$ and hence the countable union  in the last expression  has Hausdorff dimension $\beta_{q+1}<\beta_q$   in view of the countable stability property   (cf.\ \cite[Section 3.2]{falconer14fractal}).  So the event $E$ cannot have strictly positive probability. 
 
In summary, we have proved that on an event of probability one $\calM_{\alpha,\beta,p}(G) = \calM'_{\alpha,\beta,p}(G)$. By completeness of probability space, $\calM'_{\alpha,\beta,p}(G)$ is a random variable. This completes the proof.
\end{proof}

\subsection{Proof of Theorem \ref{thm:RSM}, $\beta_p>0$}
 Recall the model in  \eqref{eq:series infty p} and consider the associated empirical random sup-measure $M_n$ as  in \eqref{eq:fdd RSM}.
The goal of this section is to prove the following case of Theorem \ref{thm:RSM} (recall the  characterization of the weak convergence in the 
space of sup-measures  discussed around \eqref{eq:fdd RSM conv}), $\beta_p>0$: 
\equh\label{eq:thm super-crit}
\frac1{w_n^{p/\alpha}}\ccbb{M_n(I)}_{I\in\calG_0}\fddto
\ccbb{\calM_{\alpha,\beta,p}(I)}_{I\in\calG_0} \eqd\ccbb{\sup_{J\subset\calJ_{p,p'}}\calM_{\alpha,\beta,J}(I)}_{I\in\calG_0}.
\eque
In view of \eqref{eq:w_n}, in Theorem \ref{thm:RSM}, 
\equh\label{eq:C super-crit}
\mathfrak C_{F,p} = \pp{\frac{\mathsf C_F}{1-\beta}}^p. \eque
 We start with a truncation argument.  
 Define first 
\[
R_{n,\vvi} 
:=
 \bigcap_{k=1}^p R_{n,i_k} , \ \vvi=(i_1,\ldots,i_p)\in \calD_p.
\]
Recall that $M_n(G) = \max_{i/n\in G}X_i$ with $X_i$ in \eqref{eq:p>=1}.
 For each $\ell\in\N$ and $G\in \calG$, consider
\begin{equation}\label{eq:Mn truncate}
M_{n,\ell}(G)  := \max_{k/n\in G} w_n^{p/\alpha} \sum_{\vvi\in\calD_p, i_p\le \ell} \frac{[\varepsilon_\vvi]}{[\Gamma_\vvi]^{1/\alpha}} \inddd{k\in R_{n,\vvi}}.
\end{equation}
\begin{Lem}\label{lem:truncation}
For any $\epsilon>0$, we have
\[
\lim_{\ell\to\infty} \limsupn \proba\left(\max_{k=1,\ldots,n}\abs{ \sum_{\vvi\in\calD:i_p>\ell} \frac{[\varepsilon_\vvi]}{ [\Gamma_\vvi]^{1/\alpha}} \inddd{k\in R_{n,\vvi}}} >\epsilon\right)= 0.
\]
\end{Lem}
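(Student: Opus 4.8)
The plan is to prove the truncation estimate by a first-moment (Markov inequality) argument on the maximum, exploiting the $\alpha$-stable conditional structure of the series. Condition on the two families $\{R_{n,i}\}_{i\in\N}$ and $\{\Gamma_i\}_{i\in\N}$; then the tail sum $S_{n,k}^{(\ell)} := \sum_{\vvi\in\calD_p, i_p>\ell} [\varepsilon_\vvi][\Gamma_\vvi]^{-1/\alpha}\inddd{k\in R_{n,\vvi}}$ is, for each fixed $k$, a (signed) finite sum of products of Rademacher variables. The first step is to produce a moment bound: for a suitable power $m$ (any $m\in(\alpha,2)$ will do if $\alpha<2$, or even $m=2$ after noting orthogonality of the Rademacher products), control $\esp\big[\sabs{S_{n,k}^{(\ell)}}^m \mmid \{R_{n,i}\},\{\Gamma_i\}\big]$ by $\big(\sum_{\vvi\in\calD_p,i_p>\ell}[\Gamma_\vvi]^{-2/\alpha}\inddd{k\in R_{n,\vvi}}\big)^{m/2}$ via the Marcinkiewicz--Zygmund / Khintchine inequality applied to the Rademacher chaos of order $p$ (here one uses that the products $[\varepsilon_\vvi]$, $\vvi\in\calD_p$, are orthonormal in $L^2$, plus hypercontractivity to pass to $L^m$ with a constant depending only on $m$ and $p$).

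Next I would take the maximum over $k=1,\dots,n$ inside, bound $\max_k$ by the sum over $k$ (crudely) or, better, use a union bound after the moment estimate:
\[
\proba\Big(\max_{k=1,\dots,n}\sabs{S_{n,k}^{(\ell)}}>\epsilon\Big)\le \epsilon^{-m}\sum_{k=1}^n \esp\,\sabs{S_{n,k}^{(\ell)}}^m .
\]
Then take the unconditional expectation and integrate out the $\Gamma$'s and the $R_{n,i}$'s. Using the shift-invariance property $\proba(k\in R_n)=1/w_n$ for $k\le n$ (stated as property (i) after \eqref{eq:p>=1}) and independence of the $p$ copies, $\esp\,\inddd{k\in R_{n,\vvi}} = w_n^{-p}$. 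So the inner conditional-on-$\Gamma$ bound, after taking expectation over $R$, becomes (schematically, for $m=2$ the computation is cleanest)
\[
\sum_{k=1}^n \esp\!\!\sum_{\vvi\in\calD_p,\,i_p>\ell}\frac{1}{[\Gamma_\vvi]^{2/\alpha}}\,\inddd{k\in R_{n,\vvi}} \;=\; \frac{n}{w_n^{p}}\,\esp\!\!\sum_{\vvi\in\calD_p,\,i_p>\ell}\frac{1}{[\Gamma_\vvi]^{2/\alpha}},
\]
and one checks that $n/w_n^{p}\to 0$ (indeed $w_n\sim cn^{1-\beta}$ so $n/w_n^p \asymp n^{1-p(1-\beta)} = n^{-\beta_p}\to 0$ since $\beta_p>0$) while $\esp\sum_{\vvi\in\calD_p}[\Gamma_\vvi]^{-2/\alpha}<\infty$ when $\alpha<2$ (the series over $\calD_p$ of $\esp[\Gamma_{i_1}\cdots\Gamma_{i_p}]^{-2/\alpha}$ converges because $\esp\Gamma_i^{-2/\alpha}\asymp i^{-2/\alpha}$ and $2/\alpha>1$), and the tail $i_p>\ell$ forces the $\ell\to\infty$ piece to vanish by dominated convergence. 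The case $\alpha<2$ needs $m>\alpha$ rather than $m=2$: then one uses $\sabs{\cdot}^{m/2}\le$ sum of $\sabs{\cdot}^{m/2}$ (subadditivity, $m/2<1$) to keep the bound a single sum over $\vvi$ without cross terms, and the same convergence $\esp\sum_{\vvi}[\Gamma_\vvi]^{-m/\alpha}<\infty$ holds since $m/\alpha>1$; I would carry this version out as the default so that both $\alpha\in(0,2)$ and $\alpha=2$ are covered uniformly.

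The main obstacle I anticipate is getting the Rademacher-chaos moment inequality in the right form for $m<2$ without generating cross terms that are hard to control after integrating over $R$: the clean route is to first use hypercontractivity/Borell's inequality to reduce $L^m$ to $L^2$ at the cost of a constant $C(m,p)$, giving $\esp\,\sabs{S_{n,k}^{(\ell)}}^m\le C\big(\esp[\sabs{S_{n,k}^{(\ell)}}^2\mmid \Gamma,R]\big)^{m/2}$, and only then integrate — but one must be careful that the $(\cdot)^{m/2}$ with $m/2<1$ is concave so Jensen goes the wrong way when pulling $\esp_R$ out. The fix is to instead bound $(\sum_{\vvi} a_{\vvi})^{m/2}\le \sum_{\vvi} a_{\vvi}^{m/2}$ (valid since $m/2\le1$) with $a_{\vvi} = [\Gamma_\vvi]^{-2/\alpha}\inddd{k\in R_{n,\vvi}}$, turning the estimate back into a single, linear-in-indicators sum to which $\esp_R$ applies termwise. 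This subadditivity trick, combined with the convergent $\Gamma$-series and the decay $n/w_n^p\to0$, closes the argument.
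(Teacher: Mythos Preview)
There is a decisive arithmetic error that breaks your argument. You claim $n/w_n^p \asymp n^{1-p(1-\beta)} = n^{-\beta_p} \to 0$, but $1 - p(1-\beta) = p\beta - p + 1 = \beta_p$, so in fact $n/w_n^p \asymp n^{\beta_p}$, which \emph{diverges} in the super-critical regime $\beta_p > 0$ where this lemma is used. Consequently, after the union bound $\proba(\max_k|\cdot|>\epsilon) \le \epsilon^{-m}\sum_k \esp|\cdot|^m$, any moment of order $m \le 2$ --- whether via orthogonality at $m=2$ or via Khinchine plus subadditivity for $m<2$ --- produces a bound of order $n\, w_n^{-p}\cdot(\text{series}) \to \infty$. (A secondary issue: your claim $\esp \sum_{\vvi \in \calD_p}[\Gamma_\vvi]^{-m/\alpha} < \infty$ also fails, since $\esp\Gamma_1^{-s} = \infty$ for every $s \ge 1$ and you take $m/\alpha > 1$; but this is repairable and is not the structural obstruction.)

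The paper's proof instead uses a \emph{high} moment $\gamma > 2/(1-\beta) > 2$: after the generalized Khinchine inequality one keeps the factor $w_n^{-(p-q)}$ inside the second moment before raising to the power $\gamma/2$, producing $n\, w_n^{-\gamma(p-q)/2} \le n\, w_n^{-\gamma/2} \to 0$. Because $\esp\Gamma_i^{-2/\alpha}=\infty$ for small $i$, and because $[\Gamma_\vvi]^{-1/\alpha}$ need not have finite $\gamma$-moments, the paper first splits the sum according to how many coordinates $i_1,\ldots,i_q$ lie below a fixed threshold $M \ge 2p/\alpha$; the small-index prefactor $[\Gamma_{i_{1:q}}]^{-1/\alpha}$ (with $q \le p-1$) is separated by H\"older with an exponent $r'$ satisfying $(p-1)r'/\alpha < 1$, and only then is Khinchine with exponent $\gamma$ applied to the remaining sum over indices $> M$, where the bound $\esp[\Gamma_{i_{q+1:p}}]^{-2/\alpha}\le C[i_{q+1:p}]^{-2/\alpha}$ becomes available. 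Your low-moment/subadditivity route cannot be salvaged here; the missing idea is precisely this high-moment Khinchine step combined with the $M$-splitting and H\"older separation of the small indices.
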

\begin{proof}
The argument  is similar to those in \citep{bai20functional,samorodnitsky19extremal}.
Fix finite $M\ge 2p/\alpha$.
It suffices to show for any $0\le  q \le p-1$ and fixed $\vvi = (i_1,\dots,i_q)\in\calD_q$ with $i_q\le M$  that  
\[
\lim_{\ell\to\infty} \limsupn \proba\left(\max_{k=1,\ldots,n}\left|  \frac{[\varepsilon_{i_{1:q}}]}{[\Gamma_{i_{1:q}}]^{1/\alpha}} \sum_{M<i_{q+1}<\cdots<i_p, i_p>\ell}   \frac{[\varepsilon_{i_{q+1:p}}]}{[\Gamma_{i_{q+1:p}}]^{1/\alpha}} \inddd{k\in R_{n,\vvi}}\right| >\epsilon\right)= 0.
\]
Here and below we write $\varepsilon_{i_{a:b}} = (\varepsilon_{i_a},\varepsilon_{i_{a+1}},\dots,\varepsilon_{i_b})$ and $[\varepsilon_{i_{a:b}}] = \prod_{r=a}^b\varepsilon_{i_r}$.  By Markov inequality, union bound and stationarity in $k$, the probability above is bounded by
\[
\frac{n}{\epsilon^r} \esp  \left| \frac{ [\varepsilon_{i_{1:q}}]}{ [\Gamma_{i_{1:q}}]^{1/\alpha}} \sum_{M<i_{q+1}<\cdots<i_p, i_p>\ell}   \frac{[\varepsilon_{i_{q+1:p}}]}{ [\Gamma_{i_{q+1:p}}]^{1/\alpha}} \inddd{1\in R_{n,\vvi}}\right|^r 
\]
for some $r>0$ satisfying $1/r=1/r'+1/\gamma$, where $\gamma$ is chosen large enough so that
\[
\gamma>\frac{2}{1-\beta},
\]  and $r'$ is chosen so that $(p-1)r'/\alpha<1$ which ensures $\esp([ \Gamma_{i_{1:q}}]^{-r'/\alpha})<\infty$ for all $q\le p-1$ (cf.\ \citep[eq.\ (3.2)]{samorodnitsky89asymptotic}).
 By H\"older's inequality,  it remains to show 
\[
\lim_{\ell\to\infty} \limsupn n \esp  \left|    \sum_{M<i_{q+1}<\cdots<i_p, i_p>\ell}\frac{  [\varepsilon_{i_{q+1:p}}]}{ [\Gamma_{i_{q+1:p}}]^{1/\alpha}} \inddd{1\in R_{n,\vvi}}\right|^\gamma =0.
\] 
By a generalized Khinchine inequality \citep[Theorem 1.3(ii)]{samorodnitsky89asymptotic}, exploring the orthogonality induced by $[\varepsilon_{i_{q+1:p}}]$, and the fact $\proba(1\in R_{n,\vvi})= \proba(1\in R_{n,1})^{p-q}=w_n^{-(p-q)}$, the expectation in the displayed line above is bounded from above by,  up to a multiplicative constant,
\[
\left(\esp  \left|    \sum_{M<i_{q+1}<\cdots<i_p, i_p>\ell}   \frac{[\varepsilon_{i_{q+1:p}}]}{ [\Gamma_{i_{q+1:p}}]^{1/\alpha} }\inddd{1\in R_{n,\vvi}}\right|^2 \right)^{\gamma/2}
\le   w_n^{-\gamma (p-q)/2} \left( \sum_{M<i_{q+1}<\cdots<i_p, i_p>\ell}   \esp [\Gamma_{i_{q+1:p}}]^{-2/\alpha}\right)^{\gamma/2}.
\]
By the choice of $\gamma$, we have $nw_n^{-\gamma(p-q)/2}\le nw_n^{-\gamma/2}\rightarrow 0$. To conclude the proof, we apply the bound 
\begin{equation}\label{eq:esp Gamma product bound}
\esp [\Gamma_{i_{q+1:p}}]^{-2/\alpha}\le C [i_{q+1:p}]^{-2/\alpha},
\end{equation} 
which holds since $i_{q+1}>M\ge 2 p/\alpha\ge 2(p-q)/\alpha$  \citep[eq.\ (3.2)]{samorodnitsky89asymptotic}, so that the last multiple sum above is bounded.
 \end{proof}
Next, we define for any $J = \{\vvi_1,\dots,\vvi_r\}$, $\vv i_s=(i_{s,1},\ldots,i_{s,p})$, $s=1,\ldots,r$, that
\[
\cup J :=\ccbb{i_{s,h}:  s=1,\dots,r,\ h=1,\dots,p}\subset\N 
\]
and introduce of a finite truncation of $\calJ_{p,p'}$ in \eqref{eq:Jpp} as
\[
\calJ_{p,p'}(\ell)  := \ccbb{J\in \calJ_{p,p'}: \max(\cup J) \le \ell}.
\]

Next, as a discrete analog of Proposition \ref{pro:RSM}, we approximate $M_{n,\ell}(G)$ in \eqref{eq:Mn truncate} by 
\begin{equation}\label{eq:tilde Mnl}
\wt M_{n,\ell}(G):=\max_{J\subset \calJ_{p,p'}(\ell)}\wt M_{n,J}(G),
\end{equation}
where
\begin{equation}\label{eq:tMnJ}
\wt M_{n,J}(G)  := \begin{cases}
\displaystyle w_n^{p/\alpha}\sum_{\vvi\in J}\frac{[\varepsilon_\vvi]}{[\Gamma_\vvi]^{1/\alpha}}, & \displaystyle\mbox{ if } \frac1n R_{n,J}\cap G\ne\emptyset,
\\
-\infty, & \mbox{ otherwise.}
\end{cases}
\qmwith R_{n,J} := \bigcap_{\vvi\in J}R_{n,\vvi}.
\end{equation}
 
\begin{Lem}\label{lem:sparse}
For every $G\in\calG$ fixed, 
\[
\limn \proba\pp{M_{n,\ell}(G) = \wt M_{n,\ell}(G)} = 1.
\]
\end{Lem}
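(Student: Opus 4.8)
The plan is to show that with probability tending to one, the discrete model $M_{n,\ell}(G)$ attains its maximum over $k/n\in G$ at some index $k$ that belongs to a joint renewal set $R_{n,J}$ for exactly one $J\in\calJ_{p,p'}(\ell)$, and that no ``collisions'' occur among renewal sets indexed by multi-indices that are \emph{not} nested inside a common $\vvc\in\calD_q$ with $q\le p'$. Concretely, for a fixed $k$, the summand $w_n^{p/\alpha}\sum_{\vvi\in\calD_p,\, i_p\le\ell}[\varepsilon_\vvi][\Gamma_\vvi]^{-1/\alpha}\indd{k\in R_{n,\vvi}}$ equals $\wt M_{n,J_k^{(n)}}(G)$ (up to the $-\infty$ convention) where $J_k^{(n)}:=\{\vvi\in\calD_p, i_p\le\ell:\, k\in R_{n,\vvi}\}$. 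So it suffices to prove two things on an event of asymptotic probability one: first, that for every $k$ with $k/n\in G$, the index set $J_k^{(n)}$ lies in $\calJ_{p,p'}(\ell)$, i.e.\ $\cup J_k^{(n)} = \{c_1,\dots,c_q\}$ for some $\vvc\in\calD_q$ with $q\le p'$ and $J_k^{(n)}=J(\vvc)$; and second, that the maximizing $J$ for $\wt M_{n,\ell}(G)$ is itself realized as $J_k^{(n)}$ for some such $k$. This is exactly the discrete counterpart of the Hausdorff-dimension argument in Proposition \ref{pro:RSM}.

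The key steps, in order, are as follows. \textbf{(1)} Fix $\ell$; since $\calJ_{p,p'}(\ell)$ is a finite family, and since $[\Gamma_\vvi]^{-1/\alpha}\to 0$ as $\max\vvi\to\infty$, the supremum defining $\wt M_{n,\ell}(G)$ (in fact $M_n(G)$ restricted to contributions from indices $\le\ell$) is over finitely many terms. \textbf{(2)} The crucial estimate: for any fixed collection of $p$ indices $\{j_1,\dots,j_m\}\subset\{1,\dots,\ell\}$ with $m>p'$ renewals (i.e.\ indexing $m>p'$ of the i.i.d.\ sets $R_{n,1},\dots,R_{n,\ell}$), show $\proba\bigl(\bigcap_{s=1}^m R_{n,j_s}\cap\{1,\dots,n\}\ne\emptyset\bigr)\to 0$; more generally, if $J\subset\calD_p$ with $\cup J$ \emph{not} of the form $J(\vvc)$ for $\vvc\in\calD_q$, $q\le p'$, then $\proba(\frac1n R_{n,J}\cap G\ne\emptyset)\to 0$. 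This is where the scaling $\beta_p>0$ enters: since $\esp|R_n\cap\{1,\dots,n\}|$ and the renewal-mass function $u_p$ control the expected size of the intersection (recall $u(n)^q\sim c\,n^{q\beta-q}$ and $\proba(k\in R_{n,J})\le w_n^{-|\cup J|}\cdot(\text{renewal-mass factors})$), a first-moment bound $\esp|\frac1n R_{n,J}\cap nG|$ vanishes precisely when the relevant exponent $\beta_{|\cup J|}$ would be negative, i.e.\ $|\cup J|>p'$. For index sets $J$ of cardinality $<\binom{q}{p}$ that are not full $J(\vvc)$ but still have $|\cup J|\le p'$, the same computation shows $R_{n,J}$ is nonempty with positive probability but then the argument in step (3) handles their relation to the full $J(\cup J)$. \textbf{(3)} Inclusion--exclusion / comparison: on the event that no ``oversized'' $R_{n,J}$ (with $|\cup J|>p'$) meets $nG$, every index $k$ with $k/n\in G$ in the support has $\cup J_k^{(n)}$ of size $\le p'$; and since $k\in R_{n,i}$ for every $i\in\cup J_k^{(n)}$ forces $k\in R_{n,\vvi}$ for \emph{every} $\vvi\subset\cup J_k^{(n)}$ with $|\vvi|=p$, we get $J_k^{(n)}=J(\cup J_k^{(n)})\in\calJ_{p,p'}(\ell)$. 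Conversely the maximizer of $\wt M_{n,\ell}(G)$ over $\calJ_{p,p'}(\ell)$ is attained at some $\what J=J(\what\vvc)$ with $\frac1n R_{n,\what J}\cap G\ne\emptyset$, so there is $\what k$ with $\what k/n\in G$ and $\what k\in R_{n,\what J}$; one must check $J_{\what k}^{(n)}=\what J$ rather than something strictly larger, which again follows by excluding the oversized-intersection event from step (2). Combining, $M_{n,\ell}(G)$ and $\wt M_{n,\ell}(G)$ agree on this event.

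The main obstacle is step (2): obtaining the quantitative bound $\proba(\frac1n R_{n,J}\cap G\ne\emptyset)\to 0$ for the ``bad'' index sets, uniformly enough over the finitely many $J$, and correctly bookkeeping the exponents so that exactly $|\cup J|>p'$ is the threshold. One has to be careful that $R_{n,J}$ is an intersection over $\vvi\in J$ of sets that are themselves intersections of the \emph{shared} underlying i.i.d.\ sets $R_{n,1},\dots,R_{n,\ell}$, so the effective number of independent renewal sets is $|\cup J|$, not $|J|$; the first-moment estimate $\esp\sum_{k:k/n\in G}\indd{k\in R_{n,J}} = |nG\cap\N|\cdot\proba(1\in R_{n,1})^{|\cup J|}\cdot(\text{correction})\asymp n\cdot w_n^{-|\cup J|}$, together with $w_n\sim c\,n^{1-\beta}$ and $1-|\cup J|(1-\beta)<0 \iff |\cup J|>1/(1-\beta)$, and the finer statement involving $u_p$ for the borderline cases, is the computation to get right. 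The comparison with Proposition \ref{pro:RSM} is a reliable guide: the role played there by ``Hausdorff dimension $\beta_{q+1}<\beta_q$'' is played here by ``expected discrete intersection size of order $n^{\beta_{q+1}}=o(n^{\beta_q})$''. All other steps are routine given the finiteness of $\calJ_{p,p'}(\ell)$.
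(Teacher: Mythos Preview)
Your overall architecture matches the paper's: split the equality into two inequalities governed by two events. The first --- that no point $k$ lies in more than $p'$ of the sets $R_{n,1},\dots,R_{n,\ell}$ --- is exactly the paper's $\Omega_{n,1}(\ell)$, and your first-moment estimate $n w_n^{-m}\to 0$ for $m>p'$ is the right way to see $\proba(\Omega_{n,1}(\ell))\to 1$. On that event every $J_k^{(n)}$ is automatically some full $J(\vvc)\in\calJ_{p,p'}(\ell)$ (note $R_{n,J}=\bigcap_{i\in\cup J}R_{n,i}$ always, so your ``more generally'' clause in step (2) is vacuous), and hence $M_{n,\ell}(G)\le\wt M_{n,\ell}(G)$. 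So far so good.

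The gap is in the converse direction. You write that if $\what J=J(\what\vvc)$ is the maximizer of $\wt M_{n,\ell}(G)$ and $\what k\in R_{n,\what J}$ with $\what k/n\in G$, then ``$J_{\what k}^{(n)}=\what J$ rather than something strictly larger, which again follows by excluding the oversized-intersection event from step (2).'' This is false when $|\what\vvc|=q<p'$: excluding intersections of size $>p'$ does nothing to prevent $\what k$ from lying in some additional $R_{n,i}$ with $i\le\ell$, $i\notin\what\vvc$, giving $|\cup J_{\what k}^{(n)}|=q+1\le p'$. In that case the value of the truncated sum at $\what k$ is $\wt M_{n,J(\vvc')}(G)$ for some $\vvc'\supsetneq\what\vvc$, which by maximality is \emph{at most} $\wt M_{n,\what J}(G)$ --- possibly strictly less, since the extra terms $[\varepsilon_\vvi]/[\Gamma_\vvi]^{1/\alpha}$ can be negative. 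So you have not exhibited any $k$ at which $M_{n,\ell}$ attains the value $\wt M_{n,\what J}(G)$.

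What is actually needed is the paper's second event $\Omega_{n,2}(\ell)$: for every $J\in\calJ_{p,p'}(\ell)$ with $R_{n,J}\cap nG\ne\emptyset$, there exists some $k\in R_{n,J}\cap nG$ lying in \emph{none} of the $R_{n,i}$ with $i\in\{1,\dots,\ell\}\setminus(\cup J)$. Your size heuristic ``$|R_{n,J}\cap nG|\asymp n^{\beta_q}$ while the part covered by an extra $R_{n,i}$ has size $\asymp n^{\beta_{q+1}}=o(n^{\beta_q})$'' is the right intuition, but it is not a proof: a first-moment comparison of expected sizes does not by itself rule out the event that the smaller set covers the larger one. Turning this into a rigorous statement is nontrivial --- the paper does not redo it but cites \cite[Lemma 5.5]{samorodnitsky19extremal} (see also \cite[Lemma 3.7]{wang22choquet}) for exactly this fact. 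You should either invoke that lemma directly or supply a genuine argument (e.g.\ conditioning on $R_{n,J}$ and using independence of the remaining $R_{n,i}$ together with a quantitative lower bound on $|R_{n,J}\cap nG|$), rather than asserting it follows from step~(2).
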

\begin{proof}
Suppose $G$ is nonempty; otherwise the result is trivial.
Consider the following event:
\[
\Omega_{n,1}(\ell):=\ccbb{\max_{k=1,\dots,n}\summ i1\ell\inddd{k\in R_{n,i}}\le p'}.
\]
Restricted to $\Omega_{n,1}(\ell)$, one readily checks that $M_{n,\ell}(G)\le \wt M_{n,\ell}(G)$. For the other direction, introduce further
\[
\Omega_{n,2}(J):=\ccbb{R_{n,J}\cap G = \emptyset}\cup\ccbb{\exists k\in\{1,\dots,n\} \mbox{ s.t. } k\in R_{n,J}, k\notin \bigcup_{i\in\{1,\dots,\ell\}\setminus(\cup J)}R_{n,i}},
\]
and
\[
\Omega_{n,2}(\ell) :=\bigcap_{J\in \calJ_{p,p'}(\ell)}\Omega_{n,2}(J).
\]
Restricted to $\Omega_{n,2}(\ell)$, one has  $\wt M_{n,\ell}(G)\le M_{n,\ell}(G)$ 
because the $J$ set which maximizes \eqref{eq:tilde Mnl} can always be realized by   $\{\vv i=(i_1,\ldots,i_p) \in \calD_p,\ i_p\le \ell:\  k\in R_{n,\vvi} \}$ for some $k$. Therefore, it remains to prove that $\limn\proba(\Omega_{n,i}(\ell)) = 1$ for $i=1,2$. For $\Omega_{n,1}(\ell)$, this follows from the fact that $R_{n,\vvi}\weakto\emptyset$ (e.g.~\citep[Theorem 3.1]{samorodnitsky19extremal}) for all $\vvi\in\calD_{p'+1}$. The fact that $\limn\proba(\Omega_{n,2}(\ell)) = 1$ has played a crucial role in \citep[Lemma 5.5]{samorodnitsky19extremal} (see also \citep[Lemma 3.7]{wang22choquet}). 
\end{proof}

\begin{proof}[Proof of Theorem \ref{thm:RSM}, $\beta_p>0$]
The goal is to show   \eqref{eq:thm super-crit}. 
Recall $M_{n,\ell}$ in \eqref{eq:Mn truncate}  and $\wt M_{n,\ell}$ in \eqref{eq:tilde Mnl}, which serve  as approximations for $M_n$. 
Introduce the truncated version of $\calM_{\alpha,\beta,p} $ as
\[
\calM_{\alpha,\beta,p}\topp\ell(G):=\max_{J\in\calJ_{p,p'}(\ell)}\calM_{\alpha,\beta,J}(G),\quad G\in \calG.
\]
We first show  
\equh\label{eq:ell convergence}
\ccbb{\frac1{w_n^{p/\alpha}} \wt M_{n,\ell}(I)}_{I\in\calG_0}\fddto\ccbb{\calM_{\alpha,\beta,p}\topp\ell(I)}_{I\in\calG_0}.
\eque
Indeed, the above follows if for all   $I_s\in\calG_0, s=1,\dots,d$,  $d\in \N$,   
\equh\label{eq:ell convergence1}
\ccbb{\frac1{w_n^{p/\alpha}}\wt M_{n,J}(I_s)}_{J\in\calJ_{p,p'}(\ell),\,  s=1,\dots,d}\weakto \ccbb{\calM_{\alpha,\beta,J}(I_s)}_{J\in\calJ_{p,p'}(\ell),\, s=1,\dots,d}. 
\eque
Comparing the two sides (see \eqref{eq:tMnJ} and \eqref{eq:RSM}), it suffices to show
\[
\ccbb{\frac1nR_{n,\vvi}}_{\substack{\vvi = (i_1,\dots,i_q)\in \calD_q\\
 i_q\le\ell, q=p,\dots,p'}}
 \weakto\ccbb{\calR_{\beta,\vvi}}_{\substack{\vvi = (i_1,\dots,i_q)\in \calD_q\\
 i_q\le\ell, q=p,\dots,p'}},
\]
 a joint convergence in distribution of finitely many random closed sets. This joint convergence, a non-trivial result since  $R_{n,\vvi}$ and $R_{n,\vvi'}$ are dependent when $\vvi\cap\vvi'\ne\emptyset$, was established in \citep[Theorem 5.4]{samorodnitsky19extremal}. The desired \eqref{eq:ell convergence1} and hence \eqref{eq:ell convergence} now  follow. 
Next, for any   $I\in \cal{G}_0$, we have
\[
\lim_{\ell\to\infty}\limsupn\proba\pp{\frac1{w_n^{p/\alpha}}\abs{M_n(I)-\wt M_{n,\ell}(I)}\ge \epsilon} = 0.
\]
 Above $\wt M_{n,\ell}(I)$ may be $-\infty$ while $M_n(I)$ (and also $M_{n,\ell}(I)$ appearing below) is almost surely finite for all large enough $n$; we understand $\abs{M_n(I)-\wt M_{n,\ell}(I)}$ as $\infty$ when 
 $M_n(I)$ is finite and 
$\wt M_{n,\ell}(I)=-\infty$.  
Indeed, the above follows from Lemmas \ref{lem:truncation} and \ref{lem:sparse} applied to the two probabilities respectively on right-hand side of following inequality
\[
\proba\pp{\frac1{w_n^{p/\alpha}}\abs{M_n(I)-\wt M_{n,\ell}(I)}\ge \epsilon} 
\le 
\proba\pp{\frac1{w_n^{p/\alpha}}\abs{M_n(I)-M_{n,\ell}(I)}>\varepsilon}+\proba\pp{M_{n,\ell}(I)\ne\wt M_{n,\ell}(I)}.
\]
In addition, by  monotonicity, one has  
\[
\lim_{\ell\to\infty}\calM_{\alpha,\beta,p}\topp\ell(I)= \calM_{\alpha,\beta,p}(I).
\]
The desired result now follows from a routine triangular approximation argument \citep[Theorem 3.2]{billingsley99convergence}.
\end{proof}

\section{Convergence for random sup-measures,  $\beta_p<0$}\label{sec:sub-critical}
The goal of this section is to prove the following case of Theorem \ref{thm:RSM} (recall the  characterization of the weak convergence in the 
space of sup-measures
   discussed around \eqref{eq:fdd RSM conv}), with $\beta_p<0$:
\equh\label{eq:thm sub-crit}
\frac1{c_n}\ccbb{M_n(I)}_{I\in\calG_0}\fddto
\ccbb{\mathfrak C_{F,p}^{1/\alpha}\calM_{\alpha}^{\rm is}(I)}_{I\in\calG_0},
\eque
where we have
\equh
c_n = \pp{n\log ^{p-1}n}^{1/\alpha}
\qmand \label{eq:C_Fp}
\mathfrak C_{F,p} = \frac1{2p!(p-1)!} q_{F,p}\mathsf D_{\beta,p},
\eque   with
 $\mathfrak q_{F,p}$ as in \eqref{eq:c_Fp}, and
\begin{equation}\label{eq:shape parameter}
\mathsf D_{\beta,p} :=  \sum_{s=q_{\beta,p}}^p (-1)^{p-s}\binom ps(-\beta_s)^{p-1} \qmwith q_{\beta,p}:=\min\{q\in\N:\beta_q <0\}.
\end{equation}
It can be shown that  the parameter $\mathsf D_{\beta,p}$   takes value in $(0,1)$ when $\beta\in (0,1-1/p)$ (see \cite{bai21tail}).
Recall from \eqref{eq:w_n} that  
$w_n \sim  \mathsf C_F(1-\beta)\inv n^{1-\beta}$. Throughout this section we set 
\equh\label{eq:r_n sub}
 r_n=\frac{w_n^p}{c_n^{\alpha}} \sim \pp{\frac{\mathsf C_F}{1-\beta}}^p\frac{n^{-\beta_p}}{ \log^{p-1}n}.
\eque
To prove \eqref{eq:thm sub-crit}, the strategy is to first approximate the series representation \eqref{eq:p>=1}   by  a truncated version, and then to  establish  the point-process convergence of block maxima for the truncated process.
For each $n\in \N$,  we recall the series representation of finite-dimensional distribution $\{X_{n,k}\}_{k=1,\dots,n}\eqd\{X_k\}_{k=1,\dots,n}$ in \eqref{eq:p>=1}:  
Introduce, for a sequence $d_n\to\infty$ such that $k_n := \floor{n/d_n}\rightarrow\infty$,  
\[
\calI_{n,j}:=\{(j-1)d_n+1,\dots,jd_n\},\ j=1,\dots,k_n,
\]
which are $k_n$ non-overlapping blocks each of length $d_n$. 
Introduce   a sub-domain of $\calD_p$ with a product constraint as
\[
\calD^*_p(x):=\ccbb{\vv i=(i_1,\dots,i_p)\in\N^p:i_1<\cdots<i_p, [\vvi]= i_1\ldots i_p  \le x},\ x>0,
\]
as well as
\equh\label{eq:H(n,K)}
\calH(n,K) := \ccbb{\vvi\in \calD^*_p(Kr_n):i_p\le w_n},
\eque
where $r_n$ is as in \eqref{eq:r_n sub}.
Note that $\calH(n,K) = \calD_p^*(Kr_n)$ if $r_n\le K w_n$, which holds  for large $n$  if $\beta_{p-1}\ge 0$. It is crucial to work with $\calH(n,K)$ instead of $\calD_p^*(Kr_n)$ when $\beta_{p-1}<0$. See Remark \ref{rem:region} for explanations. 

Now we introduce the truncated process, for $K>0$,
\[
X_{n,k}\topp K := \sum_{\vvi\in\calH(n,K)}w_n^{p/\alpha}
\frac{[\varepsilon_\vvi]}{[\Gamma_\vvi]^{1/\alpha}}\inddd{k\in R_{n,\vvi}}, k=1,\dots,n, 
\]
and a point process involving block maxima:
\[
\xi_{n,K}  := \summ j1{k_n}\ddelta{\what m_{n,j}\topp K/c_n,j/k_n}\inddd{\what m_{n,j}\topp K> 0}
\qmwith \what m_{n,j}\topp K:=\max_{k\in\calI_{n,j}}X_{n,k}\topp K, j=1,\dots,k_n.
\]
It is worth mentioning that since we consider limit theorem for maximum, the negative values of $ \what m_{n,j}\topp K$ will play no role and hence are  excluded in the point process  $\xi_{n,K}$; see the relation \eqref{eq:wt xi''}  in Section \ref{sec:remainder} below. Establishing weak convergence of the point process  $\xi_{n,K}$ as $n\rightarrow\infty$ is a key step in the proof of \eqref{eq:thm sub-crit}. Throughout, we let $\mathfrak M_p((0,\infty]\times[0,1])$ denote the space of Radon point measures on $(0,\infty]\times[0,1] $ with the vague topology. The standard reference is \citet{resnick87extreme}.
\begin{Prop}\label{prop:1}
With $\beta_p<0$, $d_n\to\infty$ and $k_n\rightarrow\infty$ as $n\rightarrow\infty$, we have the following weak convergence
\equh\label{eq:PPP_limit'}
\xi_{n,K} \weakto  \xi_K:=\sif\ell1 \ddelta{\mathfrak C_{F,p}
^{1/\alpha}
\Gamma_\ell^{-1/\alpha},U_\ell}\inddd{\mathfrak C_{F,p} 
^{1/\alpha}
\Gamma_\ell^{-1/\alpha}>{K^{-1/\alpha}}},
\eque
in $\mathfrak M_p((0,\infty]\times [0,1])$, where $\mathfrak C_{F,p}$ is as in \eqref{eq:C_Fp}.
\end{Prop}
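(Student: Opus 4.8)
The plan is to prove Proposition \ref{prop:1} via the classical two-moment (Chen--Stein) method for Poisson approximation following \citet{arratia89two}, applied to a suitable family of rare events. Since convergence in $\mathfrak M_p((0,\infty]\times[0,1])$ of point processes is equivalent to convergence of Laplace functionals, and since a Radon point measure on $(0,\infty]\times[0,1]$ can be approximated by its restriction to $(\delta,\infty]\times[0,1]$, it suffices to fix $\delta>0$ (think $\delta = K^{-1/\alpha}$ or smaller) and to show that, for any finite collection of ``boxes'' $B = (a,b]\times A$ with $A\subset[0,1]$ a union of the dyadic-type cells $((j-1)/k_n, j/k_n]$, the count $\xi_{n,K}(B)$ converges to a Poisson random variable with the correct mean, jointly over disjoint boxes with asymptotic independence. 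Concretely, $\xi_{n,K}((x,\infty]\times A) = \sum_{j:\, j/k_n\in A}\indd{\what m_{n,j}\topp K > x c_n}$, so the object to control is a sum over blocks $j$ of the indicator that the truncated block maximum exceeds a growing threshold $x c_n$.

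\textbf{Key steps, in order.} First, I would reduce the block-maximum exceedance event to an exceedance event for a single summand: since on the relevant scale each block contains at most one ``large'' contributing multi-index $\vvi$ with high probability (this is the sparsity/rare-event regime $\beta_p<0$, where $R_{n,\vvi}$ is asymptotically empty), one shows $\P(\what m_{n,j}\topp K > xc_n)$ is asymptotically the probability that there exists exactly one $\vvi\in\calH(n,K)$ with $R_{n,\vvi}\cap\calI_{n,j}\ne\emptyset$ and $w_n^{p/\alpha}[\varepsilon_\vvi]/[\Gamma_\vvi]^{1/\alpha} > x c_n$, i.e. $[\Gamma_\vvi]^{-1/\alpha} w_n^{p/\alpha} c_n^{-1} > x$ with $[\varepsilon_\vvi] = +1$. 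Using $r_n = w_n^p/c_n^\alpha$, this is the event $[\Gamma_\vvi] < r_n x^{-\alpha}$, which (for the leading behavior) is governed by the intensity of $\sum_{\vvi\in\calD_p}\ddelta{[\Gamma_\vvi]}$ near the origin. Second, I would compute the first moment $\esp \xi_{n,K}(B)$: this amounts to counting, for each block $j$ and each $\vvi\in\calH(n,K)$ with $[\Gamma_\vvi]$ in the appropriate range, the probability $\P(R_{n,\vvi}\cap\calI_{n,j}\ne\emptyset)$, and summing; the combinatorial sum over $\vvi$ with the product constraint $[\vvi]\le Kr_n$ is precisely where the logarithmic normalization $\log^{p-1}n$ and the constant $\mathfrak q_{F,p}\mathsf D_{\beta,p}/(2p!(p-1)!)$ emerge — the $\binom{\ }{\ }$-type combinatorial factor $1/(p!(p-1)!)$, the terminating probability $\mathfrak q_{F,p}$ from \eqref{eq:c_Fp}, the shape constant $\mathsf D_{\beta,p}$ from the inclusion-exclusion over how a single renewal block can intersect $\calI_{n,j}$, and the factor $1/2$ from the Rademacher sign. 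One must verify $\P(R_{n,\vvi}\cap\calI_{n,j}\ne\emptyset)$ has the right asymptotics uniformly, using the renewal/Markov structure (i)--(ii) of $R_n$ and the tail of $F$ from \eqref{eq:F}, together with the key input that $\vv\eta = \bigcap_{r=1}^p\vv\tau\topp r$ is a terminating renewal process. Third, I would bound the Chen--Stein error terms $b_1$ (self-interaction within a neighborhood of block $j$) and $b_2$ (pairwise dependence of exceedance events in distinct near-neighboring blocks), and the term $b_3$ (vanishing since we use independence structure); the neighborhood is naturally indexed by overlapping multi-index supports, and one shows $b_1, b_2 \to 0$. Finally, the joint Poisson limit over disjoint boxes and the identification of the limit $\xi_K$ in \eqref{eq:PPP_limit'} follows by superposition, after recognizing that $\{[\Gamma_\vvi]\}_{\vvi\in\calD_p}$ together with the uniform block labels and the thinning by $[\varepsilon_\vvi]=+1$ produces, after the change of variable and the constant bookkeeping, exactly the Poisson point process $\sum_\ell \ddelta{\mathfrak C_{F,p}^{1/\alpha}\Gamma_\ell^{-1/\alpha}, U_\ell}$ restricted to magnitudes exceeding $K^{-1/\alpha}$.

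\textbf{Main obstacle.} The hard part will be the moment computations and error bounds over the restricted region $\calH(n,K)$ rather than the full $\calD_p^*(Kr_n)$. When $\beta_{p-1}<0$ the largest index $i_p$ must be capped at $w_n$ to kill a dangerous local-dependence contribution (a ``condensation'' phenomenon where one large renewal dominates, reminiscent of the random energy model as noted in the remark around \citep{kistler15derrida}); getting the first-moment constant exactly right then requires a delicate asymptotic analysis of the constrained combinatorial sum $\sum_{\vvi\in\calH(n,K)}\P(R_{n,\vvi}\cap\calI_{n,j}\ne\emptyset)$, showing both that the truncation $i_p\le w_n$ does not alter the leading constant and that the product constraint $[\vvi]\le Kr_n$ produces the $\log^{p-1}n$ factor with the correct coefficient. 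The Chen--Stein pairwise term $b_2$ is also subtle because exceedance events for multi-indices $\vvi,\vvi'$ sharing a common coordinate are genuinely correlated through the shared $R_{n,i}$; controlling these requires careful use of the renewal mass function asymptotics \eqref{eq:u(n)}--\eqref{eq:u_p} to show the shared-coordinate contributions are lower-order. I expect this region-dependent moment analysis — not the abstract Poisson-approximation machinery — to be the technical heart of the argument.
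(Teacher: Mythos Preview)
Your overall strategy is correct and matches the paper: the two-moment method of \citet{arratia89two} applied after reducing the block maximum to a single dominating multi-index. Two points, however, deserve correction.

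\medskip
\textbf{Missing intermediate step.} You go directly from $\xi_{n,K}$ to Bernoulli indicators of the form $\indd{R_{n,\vvi}\cap\calI_{n,j}\ne\emptyset,\,[\varepsilon_\vvi]=1,\,[\Gamma_\vvi]<r_n x^{-\alpha}}$. The paper inserts an extra approximation: it first replaces the random product $[\Gamma_\vvi]$ by the deterministic $[\vvi]$, using the strong law $\Gamma_i/i\to1$ together with the fact that, with high probability, every $\vvi\in\calH(n,K)$ with $R_{n,\vvi}\ne\emptyset$ satisfies $i_1>m$ for any fixed $m$ (so the LLN is uniform over the relevant indices). This produces the deterministic point process $\wb\eta_{n,K}$ whose indicators are simply $\chi_{\vvi,j}=\indd{R_{n,\vvi}\cap\calI_{n,j}\ne\emptyset,\,[\varepsilon_\vvi]=1}$, with the point \emph{location} determined entirely by $[\vvi]$. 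The Chen--Stein method is then applied to these; the first-moment computation becomes a pure counting problem $|\calH(n,K)|\cdot k_n\rho_n$, and the dependency graph $(\vvi,j)\sim(\vvi',j')\Leftrightarrow\vvi\cap\vvi'\ne\emptyset$ is exactly as you describe. Your version, keeping $[\Gamma_\vvi]$ random, can presumably be made to work, but the paper's ``determinization'' step (their Lemma~\ref{lem:1}) is what makes the moment computations tractable.

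\medskip
\textbf{Misattribution of $\mathsf D_{\beta,p}$.} You write that $\mathsf D_{\beta,p}$ arises from ``the inclusion-exclusion over how a single renewal block can intersect $\calI_{n,j}$'', and later that one must show ``the truncation $i_p\le w_n$ does not alter the leading constant''. Both are wrong. The renewal probability $\proba(R_{n,\vvi}\cap\calI_{n,j}\ne\emptyset)$ contributes only the factor $\mathfrak q_{F,p}$ (Lemma~\ref{lem:rho}). The constant $\mathsf D_{\beta,p}$ is purely geometric: it comes from the asymptotic volume of the constrained index region $\calH(n,K)=\{\vvi:[\vvi]\le Kr_n,\,i_p\le w_n\}$ (Lemma~\ref{lem:Cn}), and the inclusion-exclusion is over which coordinates $i_s$ are saturated by the cap $w_n$. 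When $\beta_{p-1}<0$ the cap is active ($r_n\gg w_n$), and it \emph{does} alter the leading constant --- precisely producing the alternating sum $\sum_{s=q_{\beta,p}}^p(-1)^{p-s}\binom ps(-\beta_s)^{p-1}$. Only when $\beta_{p-1}\ge0$ is the cap inactive, in which case $\mathsf D_{\beta,p}=(-\beta_p)^{p-1}$ comes straight from $|\calD_p^*(Kr_n)|$. Your diagnosis of the ``main obstacle'' is right in spirit (the constrained combinatorial sum is the heart of the matter), but the mechanism is the geometry of $\calH(n,K)$, not the renewal intersection.
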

Note that the limit $\xi_K$ in \eqref{eq:PPP_limit'} is a Poisson point process.

In order to prove Proposition \ref{prop:1}, 
we shall work with the following point processes 
\begin{align*}
 \eta_{n,K}
&:= \summ j1{k_n}\sum_{\vvi\in \calH(n,K)}\ddelta{([\Gamma_\vvi]/r_n)^{-1/\alpha} ,j/k_n}\inddd{R_{n,\vvi}\cap\calI_{n,j}\ne\emptyset,\  [\varepsilon_\vvi]=1 },\nonumber\\
 \wb\eta_{n,K}&
:= \summ j1{k_n}\sum_{\vvi\in \calH(n,K)}\ddelta{ ([\vv i]/ r_n)^{-1/\alpha},j/k_n}\inddd{R_{n,\vvi}\cap \calI_{n,j}\ne\emptyset,\ [\varepsilon_\vvi]=1 }.
\end{align*}
Note that there is only  one difference between the two above: the random product $[\Gamma_\vvi]$  in $ \eta_{n,K}$ replaced by the non-random product $[\vvi]$ in $\wb\eta_{n,K}$.   
An overview of the approximations behind Proposition \ref{prop:1} is as follows. For some test set $E$,
\equh\label{eq:structure}
 \xi_{n,K}(E) \quad\underbrace{\approx\quad \eta_{n,K}(E)\quad \approx}_{\rm Lemma~\ref{lem:1}}\quad \wb\eta_{n,K}(E) \underbrace{\weakto}_{\rm Lemma~\ref{lem:2}}\xi_K(E).
\eque
The approximations above  are understood with letting $n\to\infty$, with $E$ fixed and $K$   picked large enough depending on $E$.
The meaning of the approximations `$\approx$' will be made precise in  the lemmas indicated.
In particular, we shall consider sets of the form
\equh\label{eq:E}
E := (y_0,y_1]\times \wt I   \subset(0,\infty] \times[0,1] \qmwith \wt I  = (a,b] \mbox{ or } [0,b]\subset[0,1],
\eque
for some $0< y_0<y_1\le\infty$, and each set satisfying 
\equh\label{eq:kappa0}
 0 <K^{-1/\alpha}<y_0.
\eque
We mention that for  proving  Proposition \ref{prop:1} we shall fix $K$ and consider all $E$ described above, whereas for  proving \eqref{eq:thm sub-crit}, we shall work with a fixed collection of $E_1,\dots,E_d$, and   let $K\to\infty$  so that \eqref{eq:kappa0} is eventually satisfied.

At last, say for  the marginal convergence in \eqref{eq:thm sub-crit} with $I\in\calG_0$, we have for $y>K^{-1/\alpha}$,
\equh\label{eq:structure1}
\proba\pp{M_n(I)\le y} \underbrace\approx_{\rm Lemma~\ref{lem:remainder}} \proba\pp{M_{n,K}(I)\le y} \approx
\proba\pp{ \xi_{n,K}( (y,\infty]\times I) = 0},
 \eque
where $M_{n,K}(I)$ is defined as $M_n(I)$ with  $X_k$ (or $X_{n,k}$)   replaced by the truncated version $X_{n,k}\topp K$.

The most involved step is a Poisson approximation between $\wb\eta_{n,K}$ and $\xi$ over certain test sets, which will be proved first in Section \ref{sec:two-moment}. We then prove  Proposition \ref{prop:1} in Section \ref{sec:approximation} and finally \eqref{eq:thm sub-crit} in Section \ref{sec:remainder}.

\subsection{Two-moment method for Poisson approximation}\label{sec:two-moment}

\begin{Lem}\label{lem:2}
For $K>0$ 
fixed and for all disjoint sets
 $E_s  =  (y_{s,0}, y_{s,1}]\times \wt I_s$,  
 $s=1,\dots,d$,  of the form \eqref{eq:E},  suppose 
\[
 0 <K^{-1/\alpha}<\min_{s=1,\dots,d} y_{s,0}.
\]
We have as $n\rightarrow\infty$,
\equh\label{eq:Poisson fdd}
(\wb\eta_{n,K}(E_1),\dots,\wb \eta_{n,K}(E_d))\weakto (\xi_K(E_1),\dots,\xi_K(E_d)).
\eque
Moreover, $\wb\eta_{n,k}\weakto \xi_K$ in $\mathfrak M_p((0,\infty]\times[0,1])$. 
\end{Lem}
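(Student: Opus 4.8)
The plan is to verify the standard two criteria for Poisson convergence of a triangular array of dependent Bernoulli-type indicator sums, following the Arratia--Goldstein--Gordon two-moment method (the Chen--Stein approach). Since $\wb\eta_{n,K}$ is a sum over $\vvi\in\calH(n,K)$ and $j\in\{1,\dots,k_n\}$ of the indicators $\inddd{R_{n,\vvi}\cap\calI_{n,j}\ne\emptyset,\ [\varepsilon_\vvi]=1}$ placed at the deterministic point $(([\vvi]/r_n)^{-1/\alpha}, j/k_n)$, the count $\wb\eta_{n,K}(E_s)$ for a set $E_s$ of the form \eqref{eq:E} is exactly a sum of such indicators restricted to those $(\vvi,j)$ whose associated point lands in $E_s$. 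The first step is therefore to compute, for each fixed $s$, the first moment $\lambda_{n,s} := \esp\wb\eta_{n,K}(E_s)$. Using $\proba([\varepsilon_\vvi]=1)=1/2$, the renewal/shift-invariance property $\proba(k\in R_{n,i})=1/w_n$ together with the renewal property (ii) listed before \eqref{eq:p>=1}, and the block length $d_n\to\infty$, I would show that $\proba(R_{n,\vvi}\cap\calI_{n,j}\ne\emptyset)$ is, up to lower-order corrections, $d_n$ times a ``return probability'' governed by the terminating renewal process $\vv\eta$; summing over $\vvi$ with $[\vvi]/r_n$ in the relevant range and over the $k_n\sim n/d_n$ blocks, the $d_n$ cancels and one is left with a Riemann sum that converges to $\tfrac12\,\Leb(\wt I_s)\cdot\bigl((y_{s,0})^{-\alpha}-(y_{s,1})^{-\alpha}\bigr)\cdot(\text{constant})$. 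Matching this against $\esp\xi_K(E_s)$ — which, since $\xi_K$ is the PPP with intensity $\tfrac12$ times the $\alpha$-Fr\'echet/Lebesgue measure (after the Rademacher thinning halves the rate), integrates to the same quantity — fixes that the combinatorial constant must be $\mathfrak C_{F,p}$ as in \eqref{eq:C_Fp}; this is where the factor $1/(2p!(p-1)!)$, the terminating rate $\mathfrak q_{F,p}$ from \eqref{eq:c_Fp}, and the shape constant $\mathsf D_{\beta,p}$ of \eqref{eq:shape parameter} all enter, via an inclusion--exclusion over which subsets of the $p$ renewals share their last common point.

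The second step is to bound the two Chen--Stein error terms $b_1 = \sum \esp[\text{indicator}]\,\esp[\text{indicator}]$ over ``neighbouring'' pairs and $b_2 = \sum \esp[\text{indicator}\cdot\text{indicator}]$ over distinct neighbouring pairs, where the dependence graph couples $(\vvi,j)$ with $(\vvj,j')$ precisely when $\vvi\cap\vvj\ne\emptyset$ (so the underlying renewal sets $R_{n,\cdot}$ share at least one coordinate process) — note that different blocks $j\ne j'$ do not by themselves create dependence once the index sets are disjoint, since the $R_{n,i}$ across different $i$ are independent. For $b_1$ one simply squares the first-moment estimate and checks it vanishes because each individual term is $O(1/(n\log^{p-1}n))\times d_n$-type small and the number of terms is controlled. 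The real work is $b_2$: one must show that the probability that two overlapping multi-indices $\vvi,\vvj$ (sharing, say, $m\ge1$ coordinates) \emph{both} have their intersected renewal sets hit nearby blocks is of strictly smaller order than $(\esp\wb\eta_{n,K})^2$. Here the truncation to $\calH(n,K)$ in \eqref{eq:H(n,K)} — imposing $i_p\le w_n$ in addition to the product constraint $[\vvi]\le Kr_n$ — is exactly what is needed when $\beta_{p-1}<0$ to kill the dangerous overlap configurations (this is the content flagged in Remark \ref{rem:region}); without it the pair with $m=p-1$ shared coordinates would contribute at the same order as the main term. I would stratify the $b_2$ sum by $m$, use the renewal-mass asymptotics $u(n)\sim n^{\beta-1}/(\mathsf C_F\Gamma(\beta)\Gamma(1-\beta))$ from \eqref{eq:u(n)} to bound each hitting probability by a product of power-law factors, and verify that the resulting multiple sum over $\vvi,\vvj$ is $o\bigl((\esp\wb\eta_{n,K})^2\bigr)$ uniformly; the $i_p\le w_n$ constraint limits the ``free'' coordinates enough to gain the needed logarithmic factor.

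Granting $\lambda_{n,s}\to\lambda_s := \esp\xi_K(E_s)$ and $b_1,b_2\to0$, the Chen--Stein bound gives total-variation convergence of each $\wb\eta_{n,K}(E_s)$ to $\mathrm{Poisson}(\lambda_s)$; applying the same argument to disjoint unions $E_{s_1}\sqcup\cdots\sqcup E_{s_k}$ and using that $\xi_K$, being a PPP, has independent values on disjoint sets, upgrades this to the joint convergence \eqref{eq:Poisson fdd}. Finally, since sets of the form \eqref{eq:E} generate the vague topology on $\mathfrak M_p((0,\infty]\times[0,1])$ and $\xi_K$ is simple with the required boundary-continuity, the f.d.d.\ convergence \eqref{eq:Poisson fdd} is equivalent to $\wb\eta_{n,K}\weakto\xi_K$ in $\mathfrak M_p((0,\infty]\times[0,1])$ (see Resnick's criterion for point-process convergence, \citep{resnick87extreme}). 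The main obstacle, by a wide margin, is the $b_2$ estimate: correctly organizing the inclusion--exclusion/overlap combinatorics so that (i) the main-term constant comes out as the stated $\mathfrak C_{F,p}$ with the precise $\mathsf D_{\beta,p}$, and (ii) every overlapping-pair stratum is provably negligible thanks to the $\calH(n,K)$ truncation rather than the naive $\calD_p^*(Kr_n)$.
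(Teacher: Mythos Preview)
Your proposal is correct and follows essentially the same route as the paper: the Arratia--Goldstein--Gordon two-moment method with dependency graph $(\vvi,j)\sim(\vvi',j')$ iff $\vvi\cap\vvi'\ne\emptyset$, first-moment matching, and the $b_1,b_2$ estimates stratified by overlap size $r=|\vvi\cap\vvi'|$, followed by Kallenberg's criterion for point-process convergence. One small clarification: the constant $\mathsf D_{\beta,p}$ does \emph{not} arise from any inclusion--exclusion on the renewal side; in the paper the first moment factorizes as $|\mathfrak I_{n,s}|\cdot\rho_n$, and $\mathsf D_{\beta,p}$ together with $1/(p!(p-1)!)$ comes purely from the deterministic lattice-point count $C_{n,1}(K)=|\calH(n,K)|$ (Lemma~\ref{lem:Cn}, via the density of products of uniforms on $[a,w_n]$), while $\mathfrak q_{F,p}$ and the factor $1/2$ come separately from the last-renewal decomposition for $\rho_n$ (Lemma~\ref{lem:rho}).
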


Lemma \ref{lem:2} is a multivariate Poisson limit theorem for sums of dependent Bernoulli variables 
\[
\chi_{\vvi,j}:=  \inddd{R_{n,\vvi}\cap \calI_{n,j}\ne\emptyset,\ [\varepsilon_\vvi]=1},
\]
with summation index sets 
 \[\mathfrak I_{n,s} :=\ccbb{(\vvi,j)\in \calH(n,K)\times \{1,\ldots,k_n\}: (([\vvi]r_n)^{-1/\alpha},j/k_n)\in E_s},\ s=1,\dots.d,
 \]
 In this way, we write
 \[
 \wb \eta_{n,K}(E_s) = \sum_{(\vvi,j)\in\mathfrak I_{n,s}}\chi_{\vvi,j},\  s=1,\dots,d.
 \]
We apply the well-known  two-moment method of \citet[Theorem 2]{arratia89two}.  
The dependence structure of the Bernoulli random variables $\chi_{\vvi,j}$ can be described by a  dependency graph (\citep[Section 2.1]{penrose04random}), where disconnection of subsets of nodes implies independence. Obviously for  $\chi_{\vvi,j}$, we can define $(\vvi,j)\sim(\vvi',j')$, namely, these two nodes  to be connected,   if $\vvi\cap \vvi'\neq \emptyset$ (regardless of $j$ and $j'$). Here and below  $\vvi\cap\vvi'$ is  understood in the obvious way  by regarding $\vvi$ and $\vvi'$ as subsets of $\N$ and the 
elements
 of  $\vvi\cap\vvi'$ 
 listed
  in increasing order if it is nonempty.   
  
Before stating the conditions for the two-moment method, we introduce a few notations. First, introduce the following counting numbers (recall $\calH(n,K)$  in \eqref{eq:H(n,K)}):
\begin{align}
C_{n,1}(K) &:=\abs{\calH(n,K)},\label{eq:Cn1 def}\\
C_{n,2}(r,K) & :=  |\{(\vvi,\vvi'): \vvi,\vvi'\in \cal H(n,K),|\vvi\cap  \vvi'|=r\} |,  \ r=1,\dots,p,\label{eq:Cn2 def}
\end{align}
and the following probabilities 
\begin{align}
\rho_n & :=\esp\chi_{\vvi,j} = \proba([\varepsilon_\vvi] = 1)\cdot\proba(R_{n,\vvi}\cap \calI_{n,1}\neq \emptyset ) = \frac12\cdot \proba(R_{n,\vvi}\cap \calI_{n,1}\neq \emptyset )\label{eq:rho_n},\\
\rho_{n,\vvi,\vvi',j,j'} &  := \esp(\chi_{\vvi,j}\chi_{\vvi',j'}) \le \proba\pp{R_{n,\vvi}\cap\cal I_{n,j}\ne\emptyset, R_{n,\vvi'}\cap\calI_{n,j'}\ne\emptyset}.\label{eq:rho_n i j}
\end{align}
Note that $C_{n,2}(p,K) = C_{n,1}(K)$, and for all $j,j'$ fixed,  $\rho_{n,\vvi,\vvi',j,j'}$  has an identical value for all $\vvi,\vvi'$ such that $|\vvi\cap\vvi'| = r$. Note also that in \eqref{eq:rho_n i j} we bound $\proba([\varepsilon_\vvi] =1,  [\varepsilon_{\vvi'}] = 1) \le 1$.
Introduce
\begin{equation}\label{eq:rho_n(r)}
\rho_n(r) :=\begin{cases}
\displaystyle \sum_{j=1}^{k_n}\sum_{j'=1}^{k_n}\rho_{n,(1,\dots,p),(1,\dots,r,p+1,\dots,2p-r),j,j'}, & \mbox{ if } r=1,\dots,p-1,\\
\\
\displaystyle \sum_{j,j'\in\{1,\dots,k_n\},j\ne j'}\rho_{n,(1,\dots,p),(1,\dots,p),j,j'}, &\mbox{ if } r= p.
\end{cases}
\end{equation}
In this way, 
we have
\begin{equation}\label{eq:esp eta bar}
 \esp \wb\eta_{n,K}(E_s) = \sum_{(\vvi,j)\in \mathfrak I_{n,s}}\esp \chi_{\vvi,j} = |\mathfrak I_{n,s}|\cdot\rho_n,
 \quad  s=1,\ldots,d,
 \end{equation}
Next, we introduce, with $\mathfrak I_n:=\calH(n,K)\times\{1,\dots,k_n\}$, 
\begin{align}
b_{n,1}(K)&:=\sum_{(\vvi,j)\in\mathfrak I_n}\sum_{\substack{(\vvi',j')\in\mathfrak I_n\\ (\vvi,j)\sim(\vvi',j')}} \esp \chi_{\vvi,j}\esp\chi_{\vvi',j'} = k_n^2\sum_{\substack{\vvi,\vvi'\in\calH(n,K)\\ |\vvi\cap\vvi'|>0}}\rho_n^2 = k_n^2\rho_n^2{\summ r1{p}C_{n,2}(r,K)}\label{eq:b_n1}\\
b_{n,2}(K)&:=\sum_{(\vvi,j)\in\mathfrak I_n}\sum_{\substack{(\vvi,j)\ne(\vvi',j')\in\mathfrak I_n\\ (\vvi,j)\sim(\vvi',j')}} 
\esp (\chi_{\vvi,j}\chi_{\vvi',j'})=
\summ r1{p}\sum_{\substack{\vvi,\vvi'\in\calH(n,K)\\ |\vvi\cap\vvi'|=r}} \rho_n(r) = \summ r1{p}C_{n,2}(r,K)\rho_n(r),\label{eq:b_n2}
\end{align}
Then, by \citep[Theorem 2]{arratia89two}, the Poisson convergence \eqref{eq:Poisson fdd} follows once we show
\begin{equation}\label{eq:goal two moments}
\limn \esp \wb\eta_{n,K}(E_s) = \esp \xi_K(E_s), s=1,\dots,d,
 \qmand \limn (b_{n,1}(K)+b_{n,2}(K)) = 0.
\end{equation}

It remains to provide estimates for $C_{n,1}(K), C_{n,2}(r,K), \rho_n$ and $\rho_n(r)$, and we shall proceed one by one. Note that we need exact asymptotics of $C_{n,1}(K)$ and $\rho_n$, and only the orders of $C_{n,2}(r,K)$ and $\rho_n(r)$. 
\begin{Lem}\label{lem:D_p*}
As $r\rightarrow\infty$, we have 
\equh\label{eq:Dp}
 |\calD_p^*(r)|\sim \frac{r \log^{p-1}(r)}{p!(p-1)!}.
\eque
\end{Lem}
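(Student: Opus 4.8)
The plan is to count the lattice points in $\calD_p^*(r)=\{0<i_1<\cdots<i_p:\ i_1i_2\cdots i_p\le r\}$ by comparing with the unordered count. Since the $i_j$ are strictly increasing, the map $(i_1,\dots,i_p)\mapsto\{i_1,\dots,i_p\}$ identifies $\calD_p^*(r)$ with the set of $p$-element subsets of $\N$ whose product is at most $r$; writing $N_p(r)$ for the number of ordered $p$-tuples $(n_1,\dots,n_p)\in\N^p$ with $n_1\cdots n_p\le r$ and $D_p(r)$ for the number of \emph{distinct-coordinate} ordered tuples, we have $|\calD_p^*(r)|=D_p(r)/p!$. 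So the main task is to show $D_p(r)\sim N_p(r)$ and that $N_p(r)\sim r\log^{p-1}r/(p-1)!$.

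First I would establish the asymptotics of $N_p(r)$. This is the classical Dirichlet divisor-type estimate: $N_p(r)=\sum_{n\le r}d_{p-1}(n)$ where $d_{p-1}(n)$ counts ordered factorizations of $n$ into $p-1$ factors, or, more directly, $N_p(r)=\sum_{m\le r}\lfloor r/m\rfloor$-type iterated sums. The cleanest route is induction on $p$: $N_1(r)=\lfloor r\rfloor\sim r$, and $N_{p}(r)=\sum_{m=1}^{\lfloor r\rfloor}N_{p-1}(r/m)$. Feeding in the inductive hypothesis $N_{p-1}(x)\sim x\log^{p-2}x/(p-2)!$ and approximating the sum by the integral $\int_1^r (r/m)\log^{p-2}(r/m)/(p-2)!\,dm = r\int_1^r \log^{p-2}(r/m)/((p-2)!\,m)\,dm$, the substitution $u=\log(r/m)$ turns this into $r\int_0^{\log r} u^{p-2}/(p-2)!\,du = r\log^{p-1}r/(p-1)!$, which gives the claim. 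One must check the error terms in replacing sums by integrals are of lower order (they are $O(r\log^{p-2}r)$), which is routine.

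Next I would show $D_p(r)\sim N_p(r)$, i.e.\ that tuples with a repeated coordinate are negligible. If $n_i=n_j$ for some $i\ne j$, then setting $m=n_i=n_j$ the remaining $p-2$ coordinates have product at most $r/m^2$, so the number of such tuples is at most $\binom p2\sum_{m\ge1} N_{p-2}(r/m^2)\le C\sum_{m\ge1}(r/m^2)\log^{p-3}(r/m^2)\lesssim r\log^{p-3}r\cdot\sum_m m^{-2}=O(r\log^{p-3}r)$, which is $o(r\log^{p-1}r)$ (with the obvious separate, even easier, treatment of the small cases $p=1,2$, where $p-3<0$ and one just uses $N_0\equiv 1$, $N_{p-2}(x)\le x$). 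Combining, $|\calD_p^*(r)|=D_p(r)/p!\sim N_p(r)/p!\sim r\log^{p-1}r/(p!(p-1)!)$, as claimed.

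I expect the only real subtlety to be bookkeeping of the logarithmic-power error terms through the induction — making sure that at each step the sum-versus-integral discrepancy and the boundary contributions from $m$ near $1$ and $m$ near $r$ are genuinely $O(r\log^{p-2}r)$ rather than contributing to the leading constant. This is standard but needs care; everything else (the substitution computing the leading integral, the diagonal estimate) is elementary. An alternative, slicker approach would be to invoke a known asymptotic for the summatory function of the generalized divisor function $d_{p-1}$, namely $\sum_{n\le r}d_{p-1}(n)\sim r\log^{p-1}r/(p-1)!$, and then only carry out the diagonal estimate; I would mention this as the quickest justification if a reference (e.g.\ a standard analytic number theory text) is acceptable.
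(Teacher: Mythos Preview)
Your proposal is correct, and the steps you outline are sound: the identification $|\calD_p^*(r)|=D_p(r)/p!$, the inductive computation of $N_p(r)\sim r\log^{p-1}r/(p-1)!$, and the diagonal estimate $N_p(r)-D_p(r)=O(r\log^{p-3}r)$ all work as you describe, with the bookkeeping you flag being routine.

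The paper, however, takes a different route. Rather than inducting on $p$ via the recursion $N_p(r)=\sum_{m\le r}N_{p-1}(r/m)$, it argues directly that $p!\,|\calD_p^*(r)|$ is asymptotic to the volume $\int_{[1,r]^p}\inddd{x_1\cdots x_p\le r}\,d\vvx$ (the diagonal being negligible, as in the more detailed computation in the proof of Lemma~\ref{lem:Cn}), and then computes this integral in one shot by the change of variables $y_i=\log x_i$, reducing it to
\[
\int_{[0,\log r]^p} e^{y_1+\cdots+y_p}\inddd{y_1+\cdots+y_p\le\log r}\,d\vv y
=\frac{1}{(p-1)!}\int_0^{\log r} z^{p-1}e^z\,dz\sim\frac{r\log^{p-1}r}{(p-1)!}.
\]
The paper's approach is shorter and avoids induction entirely; your approach is more combinatorial, connects cleanly to the summatory function of the generalized divisor function $d_{p-1}$, and makes the diagonal removal explicit rather than deferring it to an integral-approximation argument. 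Either is fine for this lemma.
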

\begin{proof}
The key is to argue the integral approximation $|\calD_p^*(r)|  \sim \int_{[1 ,r]^p} 1_{\{x_1\cdots x_p\le r \}}d\vv x$ as $r\rightarrow\infty$.  Here we omit its proof, which  is similar to a more sophisticated case below in the proof of Lemma \ref{lem:Cn}. Then by change of variables,
\begin{align}
 \int_{[1 ,r]^p} 1_{\{x_1\cdots x_p\le r \}}d\vv x & =\int_{[0,\log(r)]^p} e^{y_1+\cdots+y_p} \inddd{y_1+\cdots+y_p\le \log(r)}d\vv y \nonumber\\
&=  \frac{1}{(p-1)!} \int_0^{\log(r)} z^{p-1} e^zdz\sim\frac{r\log^{p-1}(r)}{(p-1)!},\label{eq:simplex limit}
\end{align}
as $r\to\infty$.
\end{proof}

\begin{Lem}\label{lem:Cn}
Assume $\beta_p<0$. We have, for all $K>0$, 
\[
C_{n,1}(K) = |\calH(n,K)| \sim \frac{K}{p!(p-1)!} \mathsf D_{\beta,p}\cdot \frac{w_n^p}n,
\]
where
\[
\mathsf D_{\beta,p} :=  \sum_{s=q_{\beta,p}}^p (-1)^{p-s}\binom ps(-\beta_s)^{p-1} \qmwith q_{\beta,p}:=\min\{q\in\N:\beta_q <0\}.
\]
\end{Lem}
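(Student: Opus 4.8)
The plan is to (i) pass from ordered tuples to unrestricted $p$-tuples, (ii) remove the cap $\max_j i_j\le w_n$ by inclusion--exclusion, (iii) evaluate each resulting count using the divisor-sum asymptotic of Lemma~\ref{lem:D_p*} and a logarithmic change of variables, and (iv) recognise the answer as $\mathsf D_{\beta,p}$.

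\emph{Setup.} An ordered tuple in $\calH(n,K)$ is exactly a $p$-element subset of $\{1,\dots,\floor{w_n}\}$ with product at most $Kr_n$, and the tuples with a repeated coordinate contribute only $o(w_n^p/n)$ (by Lemma~\ref{lem:D_p*} and \eqref{eq:r_n sub}), so
\[
|\calH(n,K)| = \frac1{p!}\,\big|\{\vvi\in\{1,\dots,\floor{w_n}\}^p:\ [\vvi]\le Kr_n\}\big| + o\!\big(w_n^p/n\big).
\]
Inclusion--exclusion over the subset of coordinates exceeding $w_n$ gives
\[
\big|\{\vvi\in\{1,\dots,\floor{w_n}\}^p:\ [\vvi]\le Kr_n\}\big| = \summ\ell0p(-1)^\ell\binom p\ell\, A_{n,\ell},\qquad A_{n,\ell}:=\big|\{\vvi\in\N^p:\ [\vvi]\le Kr_n,\ i_1,\dots,i_\ell>w_n\}\big|.
\]
A tuple counted by $A_{n,\ell}$ satisfies $w_n^\ell<i_1\cdots i_\ell\le Kr_n$, hence $A_{n,\ell}=0$ once $\ell\ge\log(Kr_n)/\log w_n$; by \eqref{eq:w_n}, \eqref{eq:r_n sub} and $\beta_p<0$ this ratio tends to $p-\tfrac1{1-\beta}\in(0,p)$, approaching its limit \emph{from below} because of the factor $(\log n)^{p-1}$ in the denominator of $r_n$, so for $n$ large only the terms with $\ell\le k:=\floor{\log(Kr_n)/\log w_n}=p-q_{\beta,p}$ survive.

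\emph{Main estimate and conclusion.} Fix $0\le\ell\le k$, split $\vvi=(i_1,\dots,i_\ell,j_1,\dots,j_{p-\ell})$ and sum over the free block first:
\[
A_{n,\ell} = \sum_{\substack{i_1,\dots,i_\ell>w_n\\ i_1\cdots i_\ell\le Kr_n}}\big|\{(j_1,\dots,j_{p-\ell})\in\N^{p-\ell}:\ j_1\cdots j_{p-\ell}\le Kr_n/(i_1\cdots i_\ell)\}\big|.
\]
By Lemma~\ref{lem:D_p*} the inner cardinality is $\sim\frac{N(\log N)^{p-\ell-1}}{(p-\ell-1)!}$, $N=Kr_n/(i_1\cdots i_\ell)$ (the terms with $N$ small contribute $o(w_n^p/n)$); writing $L:=\log w_n$, $M:=\log(Kr_n)$ and $i_a=e^{y_a}$, the outer sum becomes $\sim\frac{Kr_n}{(p-\ell-1)!}\int_{\{y_a>L,\,\sum_a y_a\le M\}}(M-\sum_a y_a)^{p-\ell-1}\,d\vv y$, and the shift $y_a=L+z_a$ followed by the Dirichlet integral $\int_{\{z_a>0,\,|z|\le S\}}(S-|z|)^{p-\ell-1}\,dz=\frac{(p-\ell-1)!}{(p-1)!}\,S^{p-1}$ yield
\[
A_{n,\ell}\sim\frac{Kr_n}{(p-1)!}\,(M-\ell L)^{p-1}\sim\frac{Kr_n}{(p-1)!}\,(-\beta_{p-\ell})^{p-1}(\log n)^{p-1},
\]
since $M-\ell L\sim\big((p-\ell)(1-\beta)-1\big)\log n=(-\beta_{p-\ell})\log n>0$ for $\ell\le k$. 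Summing over $\ell$ and reindexing $s=p-\ell$,
\[
|\calH(n,K)|\sim\frac{Kr_n(\log n)^{p-1}}{p!(p-1)!}\summ\ell0{p-q_{\beta,p}}(-1)^\ell\binom p\ell(-\beta_{p-\ell})^{p-1}=\frac{Kr_n(\log n)^{p-1}}{p!(p-1)!}\,\mathsf D_{\beta,p};
\]
as $Kr_n(\log n)^{p-1}\sim K(\mathsf C_F/(1-\beta))^p n^{-\beta_p}\sim K\,w_n^p/n$ by \eqref{eq:w_n}, \eqref{eq:r_n sub}, this is the claim. (That the leading term does not vanish uses $\mathsf D_{\beta,p}\in(0,1)$ for $\beta\in(0,1-1/p)$, see \cite{bai21tail}.)

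The main obstacle is the estimate of $A_{n,\ell}$: the divisor-sum asymptotic of Lemma~\ref{lem:D_p*} must be invoked \emph{uniformly enough} in $N$, as $N$ ranges from a positive power of $n$ down to $O(1)$, and the error terms produced when it is inserted into the $\ell$-fold outer sum --- which is then replaced by its Riemann integral --- have to be controlled; the delicate regions are those where some $i_a$ is close to $w_n$ or $N$ is close to $1$, where the asymptotic is not yet accurate, but in each such region the stray contribution is easily bounded by $o(w_n^p/n)$. (When $\beta_{p-1}\ge0$, i.e.\ $q_{\beta,p}=p$, only $\ell=0$ survives, the cap $\max_j i_j\le w_n$ is eventually vacuous, $\calH(n,K)=\calD^*_p(Kr_n)$, and Lemma~\ref{lem:D_p*} applies directly with $\mathsf D_{\beta,p}=(-\beta_p)^{p-1}$; the substance is the case $\beta_{p-1}<0$.)
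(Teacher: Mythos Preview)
Your argument is correct and reaches the same alternating sum $\mathsf D_{\beta,p}$ by a genuinely different route from the paper. Both proofs first symmetrize (ordered tuples $\to$ unrestricted $p$-tuples, the diagonal being $o(w_n^p/n)$), but then diverge: the paper sandwiches the unrestricted count by integrals $G_n(a)=\int_{[a,w_n]^p}\ind\{[\vv x]\le Kr_n\}\,d\vv x=(w_n-a)^p\,\proba(U_1\cdots U_p\le Kr_n)$ with $U_i$ i.i.d.\ uniform on $(a,w_n)$, and then quotes an exact closed-form formula of Ishihara for this distribution, from which $\mathsf D_{\beta,p}$ drops out once the leading $j=1$ term is isolated. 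You instead do inclusion--exclusion by hand on the cap $\max_j i_j\le w_n$, reducing to the counts $A_{n,\ell}$ and evaluating each through Lemma~\ref{lem:D_p*} plus the Dirichlet integral $\int_{\{z>0,|z|\le S\}}(S-|z|)^{p-\ell-1}\,dz=\frac{(p-\ell-1)!}{(p-1)!}S^{p-1}$. The kinship is not accidental (Ishihara's identity is itself an inclusion--exclusion expansion), but your version is self-contained and makes transparent that the $s$-th summand of $\mathsf D_{\beta,p}$ is the contribution of tuples with exactly $p-s$ coordinates above $w_n$. The price is the uniformity issue you flag at the end, which the paper's exact formula avoids; it is handled either by the crude bound $|\{\vv j\in\N^m:[\vv j]\le N\}|\le C_m N(\log(N{+}2))^{m-1}$ valid for all $N\ge1$ (dominating the outer sum and showing the region $N=O(1)$ is negligible), or more cleanly by skipping Lemma~\ref{lem:D_p*} and passing directly to the $p$-fold integral for $A_{n,\ell}$ with the single change of variables $y_a=\log x_a$.
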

\begin{proof}
We shall approximate summations by the corresponding integrals as follows. For a general summable function  $f: \N^p\rightarrow \mathbb{R}$, $\sum_{\vvi \in \N^p } f(\vvi)=\int_{[0,\infty)^p} f(\ceil{x_1},\ldots \ceil{x_p} ) d\vv x =\int_{[0,\infty)^p} f(\floor{x_1}+1,\ldots \floor{x_p}+1) d\vv x$. Here and below $\floor{x}$ ($\ceil{x}$, resp.) denotes the greatest (smallest, resp.) integer less than or equal to (greater than or equal to, resp.) $x$, which should  be distinguished with $[ \vv x  ]$ which denotes the product of all components of a vector $\vv x$. 
We first derive a crude upper bound. Let $U_1,\ldots,U_p$ be i.i.d.\ uniform random variables  in $[0,1]$. Then
\begin{align}
C_{n,1}(K) & \le \frac{1}{p!} \int_{[0,\infty)^p} \inddd{\ceil{x_1} \cdots \ceil{x_p} \le K r_n, \ceil{x_i}\le w_n,i=1,\dots,p} d\vv x  \nonumber \\
&\le   
   \frac1{p!}\int_{\substack{x_1\cdots x_p\le K r_n\\ 0\le x_i\le w_n,i=1,\dots,p}} d\vv x \le \frac{w_n^p}{p!}\proba\pp{U_1\cdots U_p\le K r_n/w_n^p}\nonumber\\
& \sim \frac{w_n^p}{p!}\frac{K(r_n/w_n^p)(-\log(r_n/w_n^p)) ^{p-1}}{(p-1)!}\sim \frac {K\alpha^{p-1}}{p!(p-1)!}\cdot r_n \log^{p-1} (n),\label{eq:Cn1}
\end{align}
where we used the fact that $r_n/w_n^p = c_n^{-\alpha}\to 0$ as $n\to\infty$, and recalled in the third step that for $s\in(0,1)$,
\[
\proba(U_1\cdots U_p\le s) = s\summ k0{p-1}\frac{(-\log s)^k}{k!} \sim \frac {s(-\log s)^{p-1}}{(p-1)!}
\]
as $s\downarrow 0$ 
\citep[Lemma 3.1]{samorodnitsky89asymptotic}.   (This upper bound \eqref{eq:Cn1} is of the correct order $O(w_n^p/n)$, but not sharp in the multiplicative constant as shown below.)

Next we aim at  precise upper and lower bounds. Write 
$C_{n,1}(K)= A_n+B_n$
with
\begin{align*}
A_n &:=  |\ccbb{\vvi\in \N^p:  [\vvi]\le Kr_n,   2 \le i_1<\ldots<i_p\le w_n}|,\\
B_n &:= |\ccbb{\vvi\in \N^p:[\vvi] \le Kr_n, i_1=1, i_2<\ldots<i_p\le w_n}|.
\end{align*}
The crude upper bound \eqref{eq:Cn1} implies that
$B_n=O(r_n ( \log (n))^{p-2})$,
which will be eventually negligible.
On the other hand,
\begin{align*}
p!A_n&=|\ccbb{\vvi\in \N^p:[\vvi] \le Kr_n, 2\le  i_s\le w_n, s=1,\ldots,p, \ i_{s}\neq i_{t} \text{ if } s\neq t}|\\
 &= G_n -O(r_n ( \log (n))^{p-2}), 
\end{align*}
where
\[
G_n= |\ccbb{\vvi\in \N^p:[\vvi] \le Kr_n, 2\le i_s\le w_n, s=1,\ldots,p}|,
\] 
and the negligible $O(r_n ( \log (n))^{p-2})$ term follows from the upper bound \eqref{eq:Cn1} with $p$ replaced by $p-1$,$p-2$,\ldots,$1$, which   correspond to the number of distinct $i_s$'s. 

Now we provide precise upper and lower bounds for $G_n$:
\[
G_n = \int_{[1,\infty)^p} \inddd{ \ceil{x_1}  \cdots  \ceil{x_p} \le K r_n, \ceil{ x_i} \le w_n,i=1,\dots,p} d\vv x \le   \int_{[1,w_n]^p}\inddd{ [\vvx] \le K r_n}d\vv x 
\]
and
\[
G_n = \int_{[1,\infty)^p} \inddd{(\floor{x_1}+1) \cdots (\floor{x_p}+1)\le K r_n, \floor{ x_i} +1\le w_n,i=1,\dots,p} d\vv x \ge   \int_{ [2,w_n]^p} \inddd{ [\vvx] \le K r_n} d\vv x.
\]
So for a fixed constant $a>0$, we are left to compute the asymptotics  of
\[
G_n(a):=\int_{ [a,w_n]^p} \inddd{[\vvx] \le K r_n} d\vv x = (w_n-a)^p\proba(U_{n,1}\times\cdots\times U_{n,p} \le Kr_n),
\]
where $U_{n,1},\dots, U_{n,p}$ are i.i.d.~uniform random variables over the interval $(a,w_n)$. The density of products of i.i.d.~uniform random variables can be derived by an induction method \citep{ishihara02distribution} (or, see \citep{dettmann09product} for a simple complex-analysis argument), and one can derive the corresponding cumulative distribution function. In particular, from \citep[(3.9)]{ishihara02distribution} we have, for all $a>0$ fixed,
\begin{align*}
G_n(a) & := (w_n  -a)^p  \proba(U_{n,1}\ldots U_{n,p} \le Kr_n)\\
&  = \summ s0p\binom{p}{s} (-1)^{s}\pp{ K r_n\sum_{j=1}^{p-1}\frac{(-1)^{j+1}}{(p-j)!} \left(\log\frac{Kr_n}{ a^{p-s} w_n^{s}} \right)_+^{p-j} +(-1)^{p-1}(Kr_n- a^{p-s} w_n^s )_+  }\\
& \sim \frac{Kr_n}{(p-1)!}\summ s0p(-1)^s \binom ps\pp{\log\frac{r_n}{w_n^s}}^{p-1}_+ \sim \frac {K }{(p-1)!}\sum_{s=q_{\beta,p}}^p(-1)^{p-s} \binom ps(-\beta_{s})^{p-1} \cdot \frac{w_n^p}n,
\end{align*}
where for the last asymptotic equivalence we have used \eqref{eq:r_n sub}. Note that the last expression does not depend on $a$.
In summary, we have proved $C_{n,1}(K)\sim A_n\sim G_n/p! \sim G_n(1)/p!$ and the desired result follows.
\end{proof}
\begin{Lem}\label{lem:Cn2}
For $r=1,\dots,p-1$, we have 
\equh\label{eq:Cn}
C_{n,2}(r,K) \le \begin{cases}
\displaystyle C \frac{w_n^{2p}}{n^2} \log^{-2r} n, & \mbox{ if } \beta_r>0,\\\\
\displaystyle C \frac{w_n^{2p}}{n^2} \log^{-2r} n, & \mbox{ if } \beta_r = 0,\\\\
\displaystyle C \frac{w_n^{2p-r}}{n}\log ^{r-p}n, & \mbox{ if } \beta_r<0.
\end{cases}
\eque
\end{Lem}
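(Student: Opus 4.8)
The plan is to estimate $C_{n,2}(r,K) = |\{(\vvi,\vvi'): \vvi,\vvi'\in\calH(n,K),\ |\vvi\cap\vvi'|=r\}|$ by splitting each pair into its shared part and the two disjoint remainders. Write $\vvi = \vvj\cup\vvk$ and $\vvi'=\vvj\cup\vvk'$, where $\vvj$ has $r$ coordinates (the intersection) and $\vvk,\vvk'$ each have $p-r$ coordinates, all distinct. Up to combinatorial constants (choosing which positions in $\vvi,\vvi'$ form the intersection), this reduces the count to bounding
\[
\sum_{\vvj}\,\Bigl(\#\{\vvk : \vvj\cup\vvk\in\calH(n,K)\}\Bigr)^2
\]
where $\vvj$ ranges over increasing $r$-tuples. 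The constraint $\vvi\in\calH(n,K)$ is $[\vvi]\le Kr_n$ and $\max\vvi\le w_n$; thus $[\vvj][\vvk]\le Kr_n$, i.e. $[\vvk]\le Kr_n/[\vvj]$, with all coordinates $\le w_n$. So the inner count is governed by $|\calD_{p-r}^*(Kr_n/[\vvj])|$ truncated at $w_n$, for which I would use (an appropriately truncated version of) the asymptotics $|\calD_m^*(x)|\sim x\log^{m-1}(x)/(m!(m-1)!)$ from Lemma \ref{lem:D_p*}, together with the crude bound $|\{\vvk\le w_n: [\vvk]\le x\}|\le C\,x\log^{m-1}w_n$ valid for all $x$ (this is \eqref{eq:Cn1} with $p\rightsquigarrow m$, and it is the bound actually needed since $Kr_n/[\vvj]$ may not tend to infinity).

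With $m=p-r$ the inner count is $\le C\,(Kr_n/[\vvj])\log^{m-1}w_n$ when $[\vvj]\le Kr_n$ and is $0$ otherwise; squaring and summing over $\vvj$ gives, up to constants,
\[
C_{n,2}(r,K)\ \le\ C\, r_n^2\,(\log w_n)^{2(p-r-1)}\sum_{\substack{\vvj\in\calD_r,\ \max\vvj\le w_n\\ [\vvj]\le Kr_n}}\frac{1}{[\vvj]^2}.
\]
Now everything hinges on the sum $S_n(r):=\sum_{\vvj} [\vvj]^{-2}$ over increasing $r$-tuples with $\max\vvj\le w_n$ and $[\vvj]\le Kr_n$. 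The point is that $\sum_{k=1}^{N} k^{-2}$ converges, so naively $S_n(r)=O(1)$ — \emph{but} that only holds when $r_n$ is large enough that the product constraint $[\vvj]\le Kr_n$ is inactive relative to $\max\vvj\le w_n$, i.e. when $w_n^{\,r-1}\lesssim r_n$, equivalently $\beta_r\ge 0$ roughly. When $\beta_r<0$ we have $r_n\ll w_n^{\,r-1}$ and the constraint $[\vvj]\le Kr_n$ bites: one then estimates $S_n(r)$ by noting $[\vvj]^{-2}\le w_n^{-1}[\vvj]^{-1}\,\mathbf 1_{[\vvj]\le Kr_n}$ is too lossy, so instead split off the largest coordinate $i_r$: for fixed $i_1<\cdots<i_{r-1}$ (product $P$) one sums $\sum_{i_r\le \min(w_n,Kr_n/P)} i_r^{-2}$ and then sums over the smaller coordinates, producing the extra factor $r_n/w_n^{\,r}$ type loss that converts $r_n^2(\log)^{2(p-r-1)}$ into the stated $w_n^{2p-r}n^{-1}\log^{r-p}n$. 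Tracking powers: $r_n\asymp n^{-\beta_p}\log^{-(p-1)}n$ and $w_n\asymp n^{1-\beta}$, so $r_n^2\log^{2(p-r)}n\asymp n^{-2\beta_p}\log^{2(r-p)+2}n$; verifying that this equals $C w_n^{2p}n^{-2}\log^{-2r}n$ up to logs in the cases $\beta_r\ge 0$, and that the product-constraint correction yields $w_n^{2p-r}n^{-1}\log^{r-p}n$ when $\beta_r<0$, is the bookkeeping to be carried out.

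The main obstacle I anticipate is exactly this last case analysis: getting the sharp power of $w_n$ (namely $2p-r$ rather than $2p$) and the correct sign of the log-exponent when $\beta_r<0$ requires carefully handling the truncated simplex-volume asymptotics when the product bound $Kr_n/[\vvj]$ does \emph{not} go to infinity, so that Lemma \ref{lem:D_p*} cannot be applied as a pure asymptotic and one must use the uniform crude bound \eqref{eq:Cn1}-style estimate instead. A secondary subtlety is that the cases $\beta_r>0$ and $\beta_r=0$ produce the same displayed bound $Cw_n^{2p}n^{-2}\log^{-2r}n$, which is consistent because the $\beta_r=0$ sub-case only contributes extra slowly-varying (logarithmic) corrections that are absorbed into the constant $C$ after possibly weakening $\log^{-2r}n$ — I would remark on this rather than chase it. Since only the \emph{order} of $C_{n,2}(r,K)$ is needed (for the vanishing of $b_{n,1}(K)+b_{n,2}(K)$ in \eqref{eq:goal two moments}), throughout I would be generous with constants and keep only leading powers of $n$ and $\log n$.
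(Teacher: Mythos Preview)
Your decomposition into the shared part $\vvj$ and the two disjoint remainders $\vvk,\vvk'$ is exactly the paper's second bound, and it correctly yields
\[
C_{n,2}(r,K)\ \le\ C\,r_n^2\,(\log r_n)^{2(p-r-1)}\sum_{\vvj}[\vvj]^{-2}\ \le\ C\,r_n^2\,(\log r_n)^{2(p-r-1)}\ \sim\ C\,\frac{w_n^{2p}}{n^2}\log^{-2r}n,
\]
since $\sum_{\vvj\in\N^r}[\vvj]^{-2}<\infty$ unconditionally. This settles the cases $\beta_r\ge 0$.

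The gap is in your treatment of $\beta_r<0$. Your claim that ``the constraint $[\vvj]\le Kr_n$ bites'' and that splitting off the largest coordinate of $\vvj$ extracts an extra factor of type $r_n/w_n^{\,r}$ does not hold: $S_n(r)=\sum_\vvj[\vvj]^{-2}$ is $O(1)$ regardless of any constraint (constraints only shrink it), and your ``split off $i_r$'' step just gives $\sum_{i_r}i_r^{-2}=O(1)$ times $\sum P^{-2}=O(1)$ again. The squared-count approach is inherently too lossy here: it yields at best $C\,r_n^2(\log r_n)^{2(p-r-1)}$, which exceeds the target bound $C\,w_n^{2p-r}n^{-1}\log^{r-p}n$ by a polynomial factor $n^{-\beta_r}$. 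Bounding instead $N_\vvj^2\le N_\vvj\cdot w_n^{p-r}$ and summing gives $C\,w_n^{2p-r}/n$, but \emph{without} the factor $\log^{r-p}n$; and that factor is not cosmetic --- combined with $\rho_n(r)\le Cn/w_n^{2p-r}$ from Lemma~\ref{lem:rho_n(r)} it is exactly what makes $\rho_n(r)\,C_{n,2}(r,K)\le C\log^{r-p}n\to 0$ in \eqref{eq:b_n2}.

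What the paper does for $\beta_r<0$ is a genuinely different count that avoids squaring. By the symmetry $(\vvi,\vvi')\leftrightarrow(\vvi',\vvi)$ one may restrict to $i'_s\le i_s$ for all $s$; then, after choosing which $r$ coordinates of $\vvi$ are shared, each of the remaining $p-r$ coordinates of $\vvi'$ has at most $i_s$ choices, so the number of partners of a fixed $\vvi$ is $\le C\,[i_{r+1:p}]$ (the product of the top $p-r$ coordinates). Summing over $\vvi\in\calH(n,K)$ and using $|\calD_r^*(r_n/[i_{r+1:p}])|\le C\,(r_n/[i_{r+1:p}])\log^{r-1}r_n$ for the bottom $r$ coordinates gives
\[
C_{n,2}(r,K)\ \le\ C\,r_n\log^{r-1}r_n\sum_{i_{r+1}<\cdots<i_p\le w_n}1\ \le\ C\,r_n\,w_n^{p-r}\log^{r-1}r_n\ \sim\ C\,\frac{w_n^{2p-r}}{n}\log^{r-p}n.
\]
You will need to adopt this (or an equivalent) second bound for the $\beta_r<0$ case.
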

\begin{proof}
Write 
\begin{align}
C_{n,2}(r,K) 
& = \sum_{\substack{\vvi,\vvi'\in\calH(n,K)\\|\vvi\cap\vvi'|   = r}} 1\le  C \sum_{\vvi\in\calH(n,K)}\sum_{\substack{\vvi'\in \calH(n,K) \\
|\vvi\cap\vvi'| = r,i_s'\le i_s, s=1,\dots,p}} 1 \nonumber\\
& \le C \binom pr\sum_{\vvi\in\calH(n,K)}[i_{r+1:p}]\le C\sum_{i_{r+1}<\cdots<i_p\le w_n}[i_{r+1:p}] \abs{\calD_r^*\pp{\frac{r_n}{[i_{r+1:p}]}}}\nonumber\\
& \le C r_n\log^{r-1}r_n\sum_{i_{r+1}<\cdots<i_p\le w_n} 1\le C r_nw_n^{p-r}\log ^{r-1}r_n\sim C \frac{w_n^{2p-r}}n\frac1{\log^{p-r}r_n},\label{eq:Cn2'}
\end{align}
where we have used \eqref{eq:Dp} in the first inequality of the last line.

On the other hand,  using an integral re-expression of sum as in the proof of Lemma \ref{lem:D_p*}, and  the fact $u/2\le \floor{u}$ for $u\ge 1$, we   have  (recall $[\vv v]$ denotes product of all components in $\vv v$)
\begin{align}
C_{n,2}(r,K) &\le \int_{\substack{\vv x,\vv y\in[1,\infty)^{p-r}\\\vv z\in[1,\infty)^r}} \inddd{[\vv x][\vv z]\le 2^pKr_n, [\vv y][\vv z]\le 2^pKr_n}
d\vv xd\vv yd\vv z\nonumber
 \\ &=  \int_{\substack{\vv z\in[1,\infty)^r,[\vv z]\le 2^pKr_n}} \left(\int_{\vv x\in[1,\infty)^{p-r}} \inddd{[\vv x]\le 2^pKr_n/[\vv z]} d\vv x\right)^{2}d\vv z\nonumber\\
 &\le C r_n^2 \log(r_n)^{2(p-r-1)}\int_{\substack{\vv z\in[1,\infty)^r,[\vv z]\le 2^pKr_n}} \frac{1}{[\vv z]^2}  d\vv z \le C r_n^2 \log(r_n)^{2(p-r-1)},\label{eq:Cn2}
\end{align}
where in the third step we applied \eqref{eq:simplex limit}. 

With our choice of $r_n$, $\log r_n\sim C\log n$. Then, 
the bound \eqref{eq:Cn2'} (\eqref{eq:Cn2} resp.) is sharper if $\beta_r<0$ ($\beta_r>0$ resp.).
If $r<p, \beta_r = 0$, then \eqref{eq:Cn2'} yields an upper bound $Cw_n^{2p}\log^{r-p}n/n^2$, and \eqref{eq:Cn2} $Cw_n^{2p}\log ^{-2r}n/n^2$. We simply use the latter upper bound for our purpose later, even it might not always be better than the former. The desired \eqref{eq:Cn} now follows.
\end{proof}
\begin{Lem}\label{lem:rho}
For $\rho_n$ in \eqref{eq:rho_n}, and with any $d_n\to\infty$ as $n\rightarrow\infty$, we have 
\equh\label{eq:c_p}
\rho_n\sim \frac12\mathfrak q_{F,p} \frac n{k_nw_n^p}.
\eque
where $\mathfrak q_{F,p}$ is as in \eqref{eq:c_Fp}.
\end{Lem}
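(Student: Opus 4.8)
\emph{Proof proposal.} Since $\calI_{n,1}=\{1,\dots,d_n\}$ and, by \eqref{eq:rho_n}, $\rho_n=\tfrac12\proba(R_{n,\vvi}\cap\{1,\dots,d_n\}\ne\emptyset)$ for every $\vvi\in\calD_p$, it suffices to prove
\[
\proba\pp{R_{n,\vvi}\cap\{1,\dots,d_n\}\ne\emptyset}\sim \mathfrak q_{F,p}\,\frac{d_n}{w_n^p}
\]
and then use $n/k_n\sim d_n$ (this holds since $0\le n-d_nk_n<d_n$ and $k_n\to\infty$). The plan is a first-passage decomposition. Set $N:=|R_{n,\vvi}\cap\{1,\dots,d_n\}|$ and $T:=\min\{k\ge1:k\in R_{n,\vvi}\}$, so that $\{N\ge1\}=\{T\le d_n\}$; write $S_n:=\proba(T\le d_n)$. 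By property (i) of $R_n$ in Section~\ref{sec:background} and independence of the $p$ copies, $\proba(k\in R_{n,\vvi})=w_n^{-p}$ for $1\le k\le n$, hence $\esp N=d_n/w_n^p$.

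Next I would condition on $\{T=k\}$. This event lies in the $\sigma$-field generated by $(R_{n,i_r}\cap\{1,\dots,k\})_{r=1}^p$ and forces $k\in R_{n,i_r}$ for every $r$; by the renewal property of $R_n$ at its points in $\{1,\dots,n\}$ (property (ii) in Section~\ref{sec:background}; see \citep{owada15functional,samorodnitsky19extremal}) together with independence across $r$, the translated set $R_{n,\vvi}\cap\{k,k+1,\dots\}-k$ has, conditionally on $\{T=k\}$, the law of $\vv\eta=\bigcap_{r=1}^p\vv\tau\topp r$. Writing $N_{\vv\eta}(m):=|\vv\eta\cap\{0,1,\dots,m\}|$, we obtain $\esp(N\mid T=k)=\esp N_{\vv\eta}(d_n-k)$ for $k\le d_n$, hence the identity
\[
\frac{d_n}{w_n^p}=\esp N=\summ k1{d_n}\proba(T=k)\,\esp N_{\vv\eta}(d_n-k).
\]
Since $\beta_p<0$, the renewal process $\vv\eta$ is terminating, with $|\vv\eta|=G$ geometric of mean $1/\mathfrak q_{F,p}$ (as established in Section~\ref{sec:background}), so $N_{\vv\eta}(m)\uparrow G$ and, by monotone convergence, $\esp N_{\vv\eta}(m)\uparrow 1/\mathfrak q_{F,p}$ as $m\to\infty$.

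Finally I would sandwich the right-hand side of the identity. Fix $L\in\N$. Bounding $\esp N_{\vv\eta}(d_n-k)$ below by $\esp N_{\vv\eta}(L)$ for $k\le d_n-L$ and above by $1/\mathfrak q_{F,p}$ for all $k$, and using $\sum_{k=d_n-L+1}^{d_n}\proba(T=k)\le\sum_{k=d_n-L+1}^{d_n}\proba(k\in R_{n,\vvi})=L/w_n^p$, the identity yields
\[
\mathfrak q_{F,p}\,\frac{d_n}{w_n^p}\ \le\ S_n\ \le\ \frac1{\esp N_{\vv\eta}(L)}\,\frac{d_n}{w_n^p}+\frac{L}{w_n^p}.
\]
Dividing through by $d_n/w_n^p$, letting $n\to\infty$ (so $L/d_n\to0$) and then $L\to\infty$ (so $1/\esp N_{\vv\eta}(L)\to\mathfrak q_{F,p}$) forces $S_n\sim\mathfrak q_{F,p}\,d_n/w_n^p$, which gives \eqref{eq:c_p}. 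The only genuinely delicate point is the boundary contribution from indices $k$ within $O(L)$ of $d_n$, where the terminating set $\vv\eta$ shifted to $k$ may not have exhausted its (a.s.\ finite) collection of points inside $\{1,\dots,d_n\}$; this is harmless because there are only $O(L)$ such indices, each contributing at most $w_n^{-p}=o(d_n/w_n^p)$ to both $\esp N$ and $S_n$, which is also what makes the interchange of the limits in $n$ and $L$ legitimate through the two-sided bound above.
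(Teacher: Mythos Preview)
Your proof is correct, but it proceeds by a genuinely different decomposition than the paper's. The paper uses a \emph{last-renewal} decomposition: it splits
\[
2\rho_n = q_{n,1}\topp p + q_{n,2}\topp p,
\]
where $q_{n,1}\topp p$ is the probability that $R_{n,\vvi}$ hits $\{1,\dots,d_n\}$ and its overall maximum lies in $\{1,\dots,d_n\}$, and $q_{n,2}\topp p$ is the probability that it hits $\{1,\dots,d_n\}$ but has a further renewal beyond $d_n$. Decomposing according to $\max R_{n,\vvi}$ gives $q_{n,1}\topp p = \mathfrak q_{F,p}\,d_n/w_n^p$ exactly, and a union bound together with $\sum_j u(j)^p<\infty$ yields $q_{n,2}\topp p = o(d_n/w_n^p)$.

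You instead use a \emph{first-passage} decomposition combined with the identity $\esp N = d_n/w_n^p$, expressing $\esp N$ as $\sum_k \proba(T=k)\,\esp N_{\vv\eta}(d_n-k)$ and sandwiching via the monotone convergence $\esp N_{\vv\eta}(m)\uparrow 1/\mathfrak q_{F,p}$. Both arguments exploit $\beta_p<0$ in the same essential way (finiteness of the total number of intersection points $G$, equivalently $\esp G = 1/\mathfrak q_{F,p}<\infty$). The paper's route gives the main term as an exact equality and isolates the error as a separate probability to be bounded; your route is slightly slicker in that the sandwich handles both bounds simultaneously and avoids estimating the ``overshoot'' event $q_{n,2}\topp p$ directly, at the cost of the auxiliary truncation parameter $L$. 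Either approach works equally well here.
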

\begin{proof}
Introduce
\begin{align*}
q_{n,1}\topp p  &:= \proba\pp{R_{n,(1,\dots,p)}\cap\{1,\dots, d_n\}\ne\emptyset,\max R_{n,(1,\dots,p)}\le d_n},\\
q_{n,2}\topp p& := \proba\pp{R_{n,(1,\dots,p)}\cap\{1,\dots,d_n\}\ne\emptyset, \max R_{n,(1,\dots,p)}>d_n}.
\end{align*}
Then, 
$q_{n,1}\topp p\le 2\rho_n\le q_{n,1}\topp p+q_{n,2}\topp p$. 
For $q_{n,1}\topp p$ we can apply the last-renewal decomposition and Markov property:
\begin{align*}
q_{n,1}\topp p & = \summ i1{d_n}\proba\pp{\max R_{n,(1,\dots,p)}= i} \\
 & = \summ i1{d_n} \proba\pp{\max R_{n,(1,\dots,p)} = i\mmid i\in R_{n,(1,\dots,p)}}\proba\pp{i\in R_{n,(1,\dots,p)}} =  \summ i1{d_n} \mathfrak q_{F,p} \frac1{w_n^p} = \frac{\mathfrak q_{F,p}d_n}{w_n^p}.
\end{align*}
For $q_{n,2}\topp p$, write first by a similar decomposition based on the last renewal before time $d_n$, 
\begin{align*}
q_{n,2}\topp p & = \summ i1{d_n}\proba\pp{\max(R_{n,(1,\dots,p)}\cap\{1,\dots,d_n\}) = i, \max R_{n,(1,\dots,p)}>d_n} \\
& \le \summ i1{d_n} \proba\pp{i\in R_{n,(1,\dots,p)}, \max R_{n,(1,\dots,p)}>d_n} \\
& = \summ i1{d_n} \proba\pp{i\in R_{n,(1,\dots,p)}}\proba\pp{\max R_{n,(1,\dots,p)}>d_n\mmid i\in R_{n,(1,\dots,p)}} \le \summ i1{d_n} \frac1{w_n^p}\sum_{j=d_n-i+1}^\infty u(j)^p.
\end{align*}
The 
last step above
 follows from the renewal property and 
then the union bound.
With $v(i) := \sum_{j=i}^\infty u(j)^p\downarrow 0$ as $i\to\infty$, the last expression  becomes 
$w_n^{-p}\summ i1{d_n} v(i) = d_nw_n^{-p}\summ i1{d_n}(v(i)/d_n) = d_nw_n^{-p}o(1) = o(q_{n,1}\topp p)$.
We have thus proved \eqref{eq:c_p}.
\end{proof}
\begin{Lem}\label{lem:rho_n(r)}
With $\beta_p<0$, if $d_n\to\infty,n/d_n\to\infty$ as $n\rightarrow\infty$,  then  $\rho_n(r)$ in \eqref{eq:rho_n(r)} satisfies 
\equh\label{eq:rho_n(r) bound} 
\rho_n(r) \le  
\begin{cases}
\displaystyle\frac{Cn^{1+\beta_r}  }{w_n^{2p-r}},  &\mbox{ if } \beta_r>0,
\\\\
\displaystyle \frac {Cn\log n}{w_n^{2p-r}}, & \mbox{ if } \beta_r = 0,\\\\
\displaystyle \frac{C n}{w_n^{2p-r}},  &\mbox{ if } \beta_r<0,r<p,\\\\
\displaystyle \frac{C nd_n^{-(|\beta_p|\vee 1)}\log d_n }{w_n^p}, & \mbox{ if } r = p.
\end{cases}
\eque
\end{Lem}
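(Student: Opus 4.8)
The plan is to reduce every $\rho_n(r)$ to a sum of two-point probabilities $\proba(a\in R_{n,\vvi},\,b\in R_{n,\vvi'})$ by a union bound over the points met in each block, and then to evaluate these using the product structure of $R_{n,\vvi}$ together with the renewal-mass asymptotics of \eqref{eq:u(n)}--\eqref{eq:u_p}. Throughout I would use the crude bound from \eqref{eq:rho_n i j} (which discards the Rademacher factor), so that $\rho_n(r)$ is dominated by a sum of $\proba(R_{n,\vvi}\cap\calI_{n,j}\ne\emptyset,\,R_{n,\vvi'}\cap\calI_{n,j'}\ne\emptyset)$ over the appropriate $(j,j')$, with $\vvi=(1,\dots,p)$ and $\vvi'=(1,\dots,r,p+1,\dots,2p-r)$.

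First, the case $r<p$. Here $R_{n,\vvi}=S\cap T$ and $R_{n,\vvi'}=S\cap T'$, where $S=\bigcap_{k=1}^r R_{n,k}$, $T=\bigcap_{k=r+1}^p R_{n,k}$ and $T'=\bigcap_{k=p+1}^{2p-r}R_{n,k}$ are mutually independent. Summing the union bound over all points gives
\[
\rho_n(r)\le\sum_{a,b=1}^n\proba(a\in R_{n,\vvi},\,b\in R_{n,\vvi'}),
\]
and by the renewal property of $R_n$ one has $\proba(a\in S,\,b\in S)=w_n^{-r}u(|a-b|)^r$ while $\proba(a\in T)=\proba(b\in T')=w_n^{-(p-r)}$, all independent, so $\proba(a\in R_{n,\vvi},\,b\in R_{n,\vvi'})=w_n^{-(2p-r)}u(|a-b|)^r$. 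Hence $\rho_n(r)\le Cw_n^{-(2p-r)}n\bigl(1+\sum_{m=1}^n u(m)^r\bigr)$, and since $u(m)^r\sim Cm^{\beta_r-1}$ the inner sum is $O(1)$, $O(\log n)$ or $O(n^{\beta_r})$ according as $\beta_r<0$, $\beta_r=0$ or $\beta_r>0$; this gives the first three cases of \eqref{eq:rho_n(r) bound}.

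The case $r=p$ is the delicate one and the main obstacle. Now $\vvi=\vvi'$ and $\rho_n(p)$ is at most the expected number of ordered pairs of distinct blocks met by $R_{n,\vvi}$. For $j<j'$ I would decompose on the last point $a^\star:=\max(R_{n,\vvi}\cap\{1,\dots,jd_n\})$: meeting $\calI_{n,j}$ forces $a^\star\in\calI_{n,j}$, and conditionally on $a^\star\in R_{n,\vvi}$ the set $R_{n,\vvi}$ continues beyond $a^\star$ as a translate of $\{\eta_1,\eta_2,\dots\}$ with $\vv\eta=\bigcap_{k=1}^p\vv\tau\topp k$, so that meeting $\calI_{n,j'}$ forces $\vv\eta$ to place a renewal in $\calI_{n,j'}-a^\star$. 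Using $\proba(a^\star=a)\le\proba(a\in R_{n,\vvi})=w_n^{-p}$, a union bound over that renewal's position, and $\proba(m\in\vv\eta)=u(m)^p$, one arrives at
\[
\rho_n(p)\le Cw_n^{-p}\sum_{\substack{a,b\in\{1,\dots,n\}\\ a,b\text{ in distinct blocks}}}u(|a-b|)^p.
\]
The whole point of the block constraint is that the number of such pairs with $|a-b|=m$ is $O(nm/d_n)$ for $m\le d_n$ and $O(n)$ for $m>d_n$, whence $\rho_n(p)\le Cw_n^{-p}\bigl(\frac{n}{d_n}\sum_{m=1}^{d_n}m\,u(m)^p+n\sum_{m>d_n}u(m)^p\bigr)$. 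Since $\beta_p<0$ forces $\beta_p-1<-1$, we get $\sum_{m>d_n}u(m)^p=O(d_n^{\beta_p})$, and $\sum_{m=1}^{d_n}m\,u(m)^p$ is $O(1)$, $O(\log d_n)$ or $O(d_n^{\beta_p+1})$ according as $\beta_p<-1$, $\beta_p=-1$ or $-1<\beta_p<0$; combining these and recalling $\log d_n\asymp\log n$ yields the last case of \eqref{eq:rho_n(r) bound}. The difficulty here is entirely in extracting the correct power of $d_n$: the naive estimate $\sum_{a,b}\proba(a,b\in R_{n,\vvi})=O(n/w_n^p)$ loses this factor, and recovering it requires both the distinct-block counting and the split of $\sum_m m\,u(m)^p$ at $m=d_n$, with the logarithmic correction appearing exactly at $\beta_p=-1$.
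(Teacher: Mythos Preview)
Your argument is correct. For $r<p$ you take a more direct route than the paper: rather than splitting the double sum over $(j,j')$ into the far part $|j-j'|>1$ and the near part $|j-j'|\le1$ and bounding each piece separately, you collapse the union bound over both blocks into a single sum $\sum_{a,b=1}^n w_n^{-(2p-r)}u(|a-b|)^r$, and the three subcases fall out directly from the behaviour of $\sum_{m\le n}m^{\beta_r-1}$. This is shorter and bypasses the block arithmetic entirely; the paper's decomposition buys nothing extra in this range. For $r=p$ the two proofs are essentially the same computation: both reduce to summing $w_n^{-p}u(m)^p$ over ordered pairs $(a,b)$ lying in distinct blocks, and the paper's split by $|j-j'|=1$ versus $|j-j'|>1$ corresponds to your split by $m\le d_n$ versus $m>d_n$. (Incidentally, for $r=p$ the last-renewal decomposition on $a^\star$ is not really needed: the plain two-point union bound $\proba(a,b\in R_{n,\vvi})=w_n^{-p}u(|a-b|)^p$ already gives the same inequality.)

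Two small remarks. First, the claim ``$\log d_n\asymp\log n$'' is neither justified by the hypotheses (only $d_n\to\infty$ and $n/d_n\to\infty$ are assumed) nor needed, since the target bound already carries $\log d_n$. Second, what you actually obtain for $r=p$ is $Cn\,d_n^{-(|\beta_p|\wedge 1)}\log d_n/w_n^p$, not the stated exponent $|\beta_p|\vee 1$; the paper's own computation (combine \eqref{eq:off diagonal} with the adjacent-block bound preceding \eqref{eq:diagonal new'}) yields the same weaker exponent, so the $\vee$ in the statement appears to be a typo --- harmless, since downstream only convergence to zero is used.
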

In fact, the $\log d_n$ term in the case $r=p$ can be dropped if $\beta_p\ne-1$. We keep it for all values of $\beta_p$ for the sake of simplicity.
\begin{proof}
We shall often use  the following fact: $u(n)\le C n^{\beta-1}$ for some constant $C>0$ for all $n\in\N$ (recall \eqref{eq:u(n)}). 
We first show that for all $\vvi,\vvi'$, 
\equh\label{eq:off diagonal}
\sum_{\substack{j,j'=1,\dots,k_n\\|j-j'|>1}}\rho_{n,\vvi,\vvi',j,j'}
\le 
\begin{cases}
\displaystyle C \frac{n^{1+\beta_r}}{w_n^{2p-r}},& \mbox{ if } \beta_r>0,\\\\
\displaystyle C\frac{n\log k_n}{w_n^{2p-r}}, & \mbox{ if } \beta_r = 0,\\\\
\displaystyle C \frac{nd_n^{\beta_r}}{w_n^{2p-r}},& \mbox{ if } \beta_r<0,
\end{cases}
\qmwith r = |\vvi\cap\vvi'|.
\eque
The relation above can be obtained in the following a unified argument for all 3 cases. Write 
\begin{align*}
\rho_{n,\vvi,\vvi',j,j'}&\le\proba\pp{R_{n,\vvi}\cap \calI_{n,j}\neq \emptyset,R_{n,\vvi'}\cap \calI_{n,j'}\neq \emptyset} \le\sum_{k\in \calI_{n,j} }\sum_{k'\in \calI_{n,j'} } \proba\pp{k\in  R_{n,\vvi}, k'\in R_{n,\vvi'}}\\
&= \frac{1}{w_n^{2p-2r}}\sum_{k\in \calI_{n,j} }\sum_{k'\in \calI_{n,j'} } \proba\pp{k,k'\in  R_{n,\vvi\cap \vvi'}}=\frac{1}{w_n^{2p-2r}}\sum_{k\in \calI_{n,j} }\sum_{k'\in \calI_{n,j'} }   \pp{\frac{u(|k-k'|)}{w_n}}^r \\
&\le  \frac{Cd_n^2}{w_n^{2p-2r}}  \pp{\frac{(d_n(|j-j'|-1))^{\beta-1}}{w_n}}^r 
 \le \frac{C d_n^{1+\beta_r}}{w_n^{2p-r}}|j-j'|^{\beta_r-1}, \quad |j-j'|>1,
\end{align*}
and hence
\[
\sum_{\substack{j,j'=1,\dots,k_n\\|j-j'|>1}}\rho_{n,\vvi,\vvi',j,j'}\le C \frac{d_n^{1+\beta_r}}{w_n^{2p-r}}\summ j1{k_n}(k_n-j)j^{\beta_r-1}.
\]
Note   $k_n\sim n/d_n\to\infty$. 
Now, \eqref{eq:off diagonal} follows by applying
\[
\summ j1{n}(n-j)j^{\gamma-1}\le \begin{cases}
 Cn^{\gamma+1}, & \mbox{ if } \gamma >0,\\
C n \log n, &  \mbox{ if } \gamma = 0,\\
 Cn, & \mbox{ if } \gamma<0  .
\end{cases}
\]
For $|j-j
'|\le 1$, consider first $j' = j+1$. Then, with $r = |\vvi\cap\vvi'|$, 
\[
\rho_{n,\vvi,\vvi',j,j+1} \le \sum_{\ell=1}^{2d_n}\sum_{\substack{k\in\calI_{n,j},k'\in\calI_{n,j+1}\\|k-k'|=\ell}}\frac{u(\ell)^r}{w_n^{2p-r}} \le \frac{C}{w_n^{2p-r}}\sum_{\ell=1}^{2d_n}\ell\cdot\ell^{\beta_r-1}\le 
\begin{cases}
\displaystyle\frac {Cd_n^{\beta_r+1}}{w_n^{2p-r}}, & \mbox{ if } \beta_r>-1,\\\\
\displaystyle \frac{C\log d_n}{w_n^{2p-r}}, & \mbox{ if } \beta_r = -1,\\\\
\displaystyle\frac{C}{w_n^{2p-r}}, & \mbox{ if } \beta_r<-1. 
\end{cases}
\]
Similarly,  
\[
\rho_{n,\vvi,\vvi',j,j} \le \sum_{\ell=1}^{d_n}\sum_{\substack{k,k'\in\calI_{n,j}\\|k-k'|=\ell}}\frac{u(\ell)^r}{w_n^{2p-r}} \le \frac{C}{w_n^{2p-r}}\sum_{\ell=1}^{d_n}(d_n-\ell)\ell^{\beta_r-1}\le 
\begin{cases}
\displaystyle\frac {Cd_n^{\beta_r+1}}{w_n^{2p-r}}, & \mbox{ if } \beta_r>0,\\\\
\displaystyle \frac{Cd_n\log d_n}{w_n^{2p-r}}, & \mbox{ if } \beta_r = 0,\\\\
\displaystyle\frac{Cd_n}{w_n^{2p-r}}, & \mbox{ if } \beta_r<0. 
\end{cases}
\]
One readily checks that the bounds for $\rho_{n,\vvi,\vvi',j,j}$ are of equal or larger order than those for $\rho_{n,\vvi,\vvi',j,j+1}$, regardless of the values of $\beta_r$. Therefore, we arrive at
\equh
\sum_{\substack{j,j'=1,\dots,k_n\\|j-j'|\le 1}}\rho_{n,\vvi,\vvi',j,j'}  \le Ck_n\rho_{n,\vvi,\vvi',1,1}\le 
\begin{cases}
\displaystyle\frac {Cnd_n^{\beta_r}}{w_n^{2p-r}}, & \mbox{ if } \beta_r>0,\\\\
\displaystyle \frac{Cn\log d_n}{w_n^{2p-r}}, & \mbox{ if } \beta_r = 0,\\\\
\displaystyle\frac{Cn}{w_n^{2p-r}}, & \mbox{ if } \beta_r<0,r<p.
\end{cases}
\label{eq:diagonal new}
\eque
For the case $r=p$, 
recall that the summation now excludes $j=j'$ and hence in addition to \eqref{eq:off diagonal} we only consider $|j-j'| = 1$. Then, by the bound on $\rho_{n,\vvi,\vvi',j,j+1}$ above, we have
\equh\label{eq:diagonal new'}
\sum_{\substack{j,j'=1,\dots,k_n\\|j-j'|= 1}}\rho_{n,\vvi,\vvi,j,j'} \le \frac {Cnd_n^{-(|\beta_p|\vee 1)}}{w_n^{p}}\log d_n,
\eque
and the $\log d_n$ factor can be dropped if $\beta_p\ne -1$. 
Combining \eqref{eq:off diagonal}, \eqref{eq:diagonal new} and \eqref{eq:diagonal new'}, we obtain \eqref{eq:rho_n(r) bound}. 
\end{proof}
\begin{proof}[Proof of Lemma \ref{lem:2}]
Now we complete the proof of \eqref{eq:goal two moments}. It is straightforward to show
\[
\limn\esp\wb\eta_{n,K}(E_s) = \esp \xi_K(E_s),\ s=1,\dots,d,
\]
Indeed, consider $E = (y_0,y_1]\times (a,b]$ or $(y_0,y_1]\times [0,b]$ (with $a=0$ in the latter case). Then from \eqref{eq:esp eta bar}, by  Lemma  \ref{lem:Cn} (with $y_i^{-1/\alpha}$, $i=0,1$, playing the role of $K$ there),  Lemma \ref{lem:rho} and the restriction $K^{-1/\alpha}<y_0<y_1$, we have
\begin{align*}
\esp\wb\eta_{n,K}(E) & \sim  \pp{C_{n,1}(y_0^{-\alpha})-C_{n,1}(y_1^{-\alpha})}\cdot (b-a)k_n\cdot \rho_n \\
& \sim (y_0^{-\alpha}-y_1^{-\alpha})\frac{\mathsf D_{\beta,p}}{p!(p-1)!}\frac{w_n^p}n\cdot (b-a)k_n \cdot\frac12\mathfrak q_{F,p} \frac n{k_nw_n^p}\\
&\to \frac1{2p!(p-1)!}(y_0^{-\alpha}-y_1^{-\alpha})\mathsf D_{\beta,p}\mathfrak q_{F,p} (b-a)= \esp \xi_K(E).
\end{align*}
Next, among the bounds for $C_{n,2}(r,K)$ in Lemma \ref{lem:Cn2},  the   $r=1$ (note that $\beta_1=\beta>0$) case is of the dominating order  for $r=1,\ldots,p-1$. Recall also that $C_{n,2}(p,K) = C_{n,1}(K)$, for which we have the precise estimate in Lemma \ref{lem:Cn}. Combining these into \eqref{eq:b_n1}  we have
\begin{align*}
b_{n,1}(K) & \le k_n^2 \pp{C C_{n,2}(1,K)+C_{n,1}(K)}\rho_n^2\\
& \le Ck_n^2 \pp{\frac{w_n^{2p}}{n^2}\log^{-2}n + r_n \log^{p-1}n} \pp{\frac n{k_nw_n^p}}^2 \le C\left(\frac {1}{\log^{2}n} + n^{\beta_p} \right)\rightarrow 0
\end{align*}
as $n\rightarrow\infty$,
where for the last inequality we have used \eqref{eq:r_n sub}.
At last by Lemmas \ref{lem:Cn}, \ref{lem:Cn2}, \ref{lem:rho_n(r)}, and the assumption $w_n \sim \frac{\mathsf C_F}{1-\beta} n^{1-\beta}$,   
\[ 
\rho_n(r) C_n(r,K) \le  
\begin{cases}
\displaystyle   C\log^{-2r}(n),  &\mbox{ if } \beta_r>0,
\\\\
\displaystyle   C\log^{1-2r}(n), & \mbox{ if } \beta_r = 0,\\\\
\displaystyle C\log ^{r-p}n  ,  &\mbox{ if } \beta_r<0,r<p,\\\\
\displaystyle   C  d_n^{-(|\beta_p|\vee 1)}   \log d_n, & \mbox{ if } r = p,
\end{cases}
\]
  and  in view of \eqref{eq:b_n2}, we have $\limn b_{n,2}(K) = 0$.
 We have thereby completed the proof of  \eqref{eq:goal two moments} and  hence \eqref{eq:Poisson fdd}.

The fact that \eqref{eq:Poisson fdd}   implies the weak convergence of the point processes $\wb\eta_{n,K}\weakto \xi_K$ in $\mathfrak M_p((0,\infty]\times[0,1])$  as $n\rightarrow\infty$ is well-known (e.g., by combining \citet[Theorem 4.11(ii)]{kallenberg17random} with Cram\'er--Wold Theorem.) Note that we shall need to apply \eqref{eq:Poisson fdd}  with a relaxed assumption on the test sets $E = (y_0,y_1]\times \wt{I}$ so that $y_0>0$ (instead of $y_0>K^{-1/\alpha}$ above). The modification does not effect the proof above as for the point processes of interest, there are no points of which the first coordinate has a value less than $K^{-1/\alpha}$ by our construction.
\end{proof}
\subsection{Proof of Proposition \ref{prop:1}}\label{sec:approximation}
Recall the overview of the approximations we shall work with in \eqref{eq:structure}.  

Next, we approximate $\xi_{n,K}$ by $\wb\eta_{n,K}$, with an intermediate approximation by $\eta_{n,K}$ as seen  in the proof. Recall the set $E$ in \eqref{eq:E}. Introduce accordingly
\[
E^{\pm,\epsilon}:=(y_0(1\mp\epsilon)^{1/\alpha},y_1(1\pm\epsilon)^{1/\alpha}]\times \wt{I},
\]
where $\wt{I}=(a,b]$ or $[0,b]$ (with $a=0$ in the latter case), $0\le a<b\le 1$.
We shall assume $0<y_0<y_1$ and eventually  $\epsilon>0$ small enough so that $y_0(1+\epsilon)^{1/\alpha}<y_1(1-\epsilon)^{1/\alpha}$. Otherwise, by convention $E^{-,\epsilon} = \emptyset$.
\begin{Lem}\label{lem:1}
With the notations above, for all $\epsilon>0$, with probability going to 1 as $n\to\infty$,
\[
\wb \eta_{n,K}(E^{-,\epsilon})\le \xi_{n,K}(E)\le \wb\eta_{n,K}(E^{+,\epsilon}).
\]
\end{Lem}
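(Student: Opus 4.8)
The plan is to establish the sandwich through the two successive reductions $\xi_{n,K}\approx\eta_{n,K}\approx\wb\eta_{n,K}$ indicated in \eqref{eq:structure}, each realized as a containment of events of asymptotic probability one. For the first reduction, introduce the event
\[
\Omega_{n,3}:=\ccbb{\ \sabs{\{\vvi\in\calH(n,K):R_{n,\vvi}\cap\calI_{n,j}\ne\emptyset\}}\le1\ \text{ for every }j=1,\dots,k_n\ }.
\]
On $\Omega_{n,3}$, writing $\vvi_0=\vvi_0(j)\in\calH(n,K)$ for the unique multi-index (if any) with $R_{n,\vvi_0}\cap\calI_{n,j}\ne\emptyset$, one has $X_{n,k}\topp K=w_n^{p/\alpha}[\varepsilon_{\vvi_0}][\Gamma_{\vvi_0}]^{-1/\alpha}\indd{k\in R_{n,\vvi_0}}$ for all $k\in\calI_{n,j}$ (and $X_{n,k}\topp K=0$ if no such $\vvi_0$ exists), whence $\what m_{n,j}\topp K/c_n=([\Gamma_{\vvi_0}]/r_n)^{-1/\alpha}$ when $[\varepsilon_{\vvi_0}]=1$ and $\what m_{n,j}\topp K\le0$ otherwise. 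Since $y_0>0$, comparing with the definitions of $\xi_{n,K}$ and $\eta_{n,K}$ yields $\xi_{n,K}(E)=\eta_{n,K}(E)$ on $\Omega_{n,3}$, for every $E=(y_0,y_1]\times\wt I$ of the form \eqref{eq:E}. That $\proba(\Omega_{n,3})\to1$ follows from the union bound
\[
\proba(\Omega_{n,3}^c)\le\summ j1{k_n}\ \sum_{\substack{\vvi,\vvi'\in\calH(n,K)\\\vvi\ne\vvi'}}\proba\pp{R_{n,\vvi}\cap\calI_{n,j}\ne\emptyset,\ R_{n,\vvi'}\cap\calI_{n,j}\ne\emptyset}:
\]
splitting according to $r=\sabs{\vvi\cap\vvi'}\in\{0,\dots,p-1\}$, the $r=0$ part is at most $k_nC_{n,1}(K)^2(2\rho_n)^2$, which tends to $0$ by Lemmas \ref{lem:Cn} and \ref{lem:rho} (and $k_n\to\infty$), while for $1\le r\le p-1$ the part is at most $C_{n,2}(r,K)\,\rho_n(r)$, which tends to $0$ exactly as does $b_{n,2}(K)$ in the proof of Lemma \ref{lem:2} (via Lemmas \ref{lem:Cn2} and \ref{lem:rho_n(r)}).

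For the second reduction, note that $\eta_{n,K}$ and $\wb\eta_{n,K}$ carry exactly the same atoms, the atom indexed by a pair $(\vvi,j)$ --- necessarily with $\vvi\in\calH(n,K)$, $[\varepsilon_\vvi]=1$, $R_{n,\vvi}\cap\calI_{n,j}\ne\emptyset$, so that $\vvi$ meets $\{1,\dots,n\}$ --- sitting at first coordinate $([\Gamma_\vvi]/r_n)^{-1/\alpha}$ in $\eta_{n,K}$ and $([\vvi]/r_n)^{-1/\alpha}$ in $\wb\eta_{n,K}$, the two differing by the factor $([\Gamma_\vvi]/[\vvi])^{-1/\alpha}$, the second coordinate $j/k_n$ being unchanged. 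Consequently, on the event
\[
\Omega_{n,4}:=\ccbb{[\Gamma_\vvi]/[\vvi]\in(1-\epsilon,1+\epsilon)\ \text{ for every }\vvi\in\calH(n,K)\ \text{meeting }\{1,\dots,n\}},
\]
every atom of $\eta_{n,K}$ in $E=(y_0,y_1]\times\wt I$ yields an atom of $\wb\eta_{n,K}$ in $E^{+,\epsilon}$, and every atom of $\wb\eta_{n,K}$ in $E^{-,\epsilon}$ yields an atom of $\eta_{n,K}$ in $E$; hence $\wb\eta_{n,K}(E^{-,\epsilon})\le\eta_{n,K}(E)\le\wb\eta_{n,K}(E^{+,\epsilon})$ on $\Omega_{n,4}$.

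It remains to combine the reductions. Since $[\Gamma_\vvi]/[\vvi]=\prod_{s=1}^p\Gamma_{i_s}/i_s$, fix $\epsilon'>0$ small enough that $(1-\epsilon')^p>1-\epsilon$ and $(1+\epsilon')^p<1+\epsilon$, and, for arbitrary $\delta>0$, choose $m\in\N$ with $\proba(\sup_{i>m}\sabs{\Gamma_i/i-1}>\epsilon')<\delta$ --- possible since $\Gamma_i/i\to1$ almost surely. For this fixed $m$ the ``small-index'' multi-indices are asymptotically negligible: the expected number of $\vvi\in\calH(n,K)$ with $i_1\le m$ meeting $\{1,\dots,n\}$ is at most $\frac n{w_n^p}\summ a1m\sabs{\calD_{p-1}^*(Kr_n/a)}\le Cm\,\frac n{w_n^p}\,r_n\log^{p-2}n$ by Lemma \ref{lem:D_p*} applied with $p-1$ in place of $p$, which is $O(\log^{-1}n)\to0$ by \eqref{eq:r_n sub} together with $n/w_n^p\sim Cn^{\beta_p}$ (here $p\ge2$, as $\beta_p<0$). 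On the event that $\Omega_{n,3}$ holds, that no $\vvi\in\calH(n,K)$ with $i_1\le m$ meets $\{1,\dots,n\}$, and that $\sup_{i>m}\sabs{\Gamma_i/i-1}\le\epsilon'$ --- an event of probability at least $1-\delta-o(1)$ --- every $\vvi\in\calH(n,K)$ meeting $\{1,\dots,n\}$ has all coordinates exceeding $m$, hence $[\Gamma_\vvi]/[\vvi]\in\big((1-\epsilon')^p,(1+\epsilon')^p\big)\subset(1-\epsilon,1+\epsilon)$, so that $\Omega_{n,4}$ holds and therefore $\wb\eta_{n,K}(E^{-,\epsilon})\le\xi_{n,K}(E)\le\wb\eta_{n,K}(E^{+,\epsilon})$. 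Since $\delta>0$ is arbitrary, $\proba\big(\wb\eta_{n,K}(E^{-,\epsilon})\le\xi_{n,K}(E)\le\wb\eta_{n,K}(E^{+,\epsilon})\big)\to1$, which is the assertion.

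The step I expect to be the main obstacle is precisely this uniform control of $[\Gamma_\vvi]/[\vvi]$ behind $\Omega_{n,4}$: a multi-index whose smallest coordinate $i_1$ is small can have $\Gamma_{i_1}/i_1$ far from $1$ with non-vanishing probability, and such indices are not excluded from $\calH(n,K)$ a priori. The resolution is the sparsity estimate above, whose saving $\log^{-1}n$ factor originates from the product constraint $[\vvi]\le Kr_n$ in the definition of $\calH(n,K)$; after it, a single almost-sure strong-law bound on the tail of $\{\Gamma_i/i\}$ suffices. Verifying $\proba(\Omega_{n,3})\to1$ is, by comparison, routine given the counting estimates of Lemmas \ref{lem:Cn}--\ref{lem:rho_n(r)}.
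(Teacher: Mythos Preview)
Your proof is correct and follows essentially the same approach as the paper's own proof: your events $\Omega_{n,3}$ and $\Omega_{n,4}$ are precisely the paper's $\Omega_{n,1}(K)$ and $\Omega_{n,2}(K,\epsilon)$, and you verify $\proba(\Omega_{n,3})\to1$ and $\proba(\Omega_{n,4})\to1$ by the same mechanisms (the two-moment estimates for the former, the small-$i_1$ sparsity bound plus the strong law $\Gamma_i/i\to1$ for the latter). Your handling of the $\epsilon'$ versus $\epsilon$ distinction is in fact slightly more explicit than the paper's, but the argument is otherwise the same.
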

\begin{proof}
Consider the following events
 \begin{align*} 
\Omega_{n,1}(K) & :=\bigcap_{j=1}^{k_n}\ccbb{\exists \mbox{ at most one $\vvi\in\calH(n,K)$ s.t.~$R_{n,\vvi}\cap \calI_{n,j}\ne\emptyset$}},\\
 \Omega_{n,2}(K,\epsilon) &:=\ccbb{\frac{[\Gamma_\vvi]}{[\vvi]}\in(1-\epsilon,1+\epsilon), \mfa \vvi\in \calH(n,K), R_{n,\vvi}\ne\emptyset}.
\end{align*}

Recall $
\what m_{n,j}\topp K/c_n =\max_{k\in\calI_{n,j}}\sum_{\vvi\in\calH(n,K)} 
\frac{[\varepsilon_\vvi]}{([\Gamma_\vvi]/r_n)^{1/\alpha}}\inddd{k\in R_{n,\vvi}}$.
Fix  $K>0$.  Then, we claim that under  $\Omega_{n,1}(K)$,   for each $j=1,\ldots,k_n$, the following two subsets of $(0,\infty]$ coincide:
\[\{\what m_{n,j}\topp K/c_n:  \what m_{n,j}\topp K  > 0 \} =\{([\Gamma_\vvi]/r_n)^{-1/\alpha}:   \vvi\in\calH(n,K),\ R_{n,\vvi}\cap \calI_{n,j}\ne\emptyset,\ [\varepsilon_\vvi]=1\}.
\] 
In particular, both are empty if   for any $\vvi \in \calH(n,K)$, we have either $R_{n,\vvi}\cap \calI_{n,j}=\emptyset$  or $[\varepsilon_\vvi]=-1$; otherwise, both are the same singleton set $\{([\Gamma_\vvi]/r_n)^{-1/\alpha}\}$ where $\vvi$  is the  unique index under $\Omega_{n,1}(K)$  for which $R_{n,\vvi}\cap \calI_{n,j}\ne\emptyset$. It follows that  $\xi_{n,K} = \eta_{n,K}$ under $\Omega_{n,1}(K)$.

Next,  note that the only difference between $\wb\eta_{n,K}$ and $\eta_{n,K}$ is that for each $\vvi$, the random $[\Gamma_\vvi]$ in $\eta_{n,K}$  is replaced by the nonrandom $[\vvi]$ in $\wb\eta_{n,K}$.   If  restricted to $\Omega_{n,1}(K,\epsilon)\cap \Omega_{n,2}(K,\epsilon)$, it easily follows from monotonicity  that $\wb \eta_{n,K}(E^{-,\epsilon})\le \eta_{n,K}(E)\le \wb\eta_{n,K}(E^{+,\epsilon})$. 

Combining the above, to prove the desired result, it remains to show that the   events $\Omega_{n,1}(K)$  and $\Omega_{n,2}(K)$ each occur with probability tending to one as $n\to\infty$. 

We first prove $\limn \proba\pp{\Omega_{n,1}(K)} =1$. This 
follows from the fact that the limit of $\wb\eta_{n,K}$   is Poisson. 
 A more detailed argument  can be given by using the estimates needed in the Poisson approximation. Indeed, setting $N_n(K)= \summ j 1{k_n} \sum_{\vvi \ne\vvi'\in\calH(n,K)}\inddd{R_{n,\vvi}\cap \calI_{n,j}\ne\emptyset,R_{n,\vvi'}\cap\calI_{n,j}\ne\emptyset}$, then 
\begin{align}
\proba\pp{\Omega_{n,1}(K)^c}  & = \proba \pp{N_n(K)\ge 1}\le \esp N_n(K)=  \summ j1{k_n} \sum_{\vvi \ne\vvi'\in\calH(n,K)} \rho_{n,\vvi,\vvi',j,j} \notag \\ & \le  
C_{n,1}(K)^2 k_n \rho_n^2+\summ r1{p-1}C_{n,2}(r,K)\rho_n(r) \notag
\end{align}
where the last bound is obtained by dividing the double sum over $(\vvi,\vvi')$ into two cases: $\vvi\cap \vvi'=\emptyset$ and $\vvi\cap \vvi'\neq \emptyset$. The first term in the bound, in view of Lemmas \ref{lem:Cn} and \ref{lem:rho},  is bounded up to a constant by $k_n^{-1}\rightarrow 0$ as $n\rightarrow \infty$. The second term in the bound is $b_{n,2}(K)$ in \eqref{eq:b_n2} which tends to zero as $n\rightarrow\infty$ as already shown.

At last, we show that $\limn\proba(\Omega_{n,2}(K,\epsilon)) = 1$. As an intermediate step, we claim that for all $m\in\N$ fixed, 
\[
\limn\proba\pp{\exists  \vvi = (i_1,\dots,i_p)\in\calH(n,K), \mbox{ s.t. } R_{n,\vvi}\ne\emptyset, i_1\le m } = 0.
\]
Indeed, by a union bound, Lemma \ref{lem:rho},  \eqref{eq:r_n sub} and  \eqref{eq:Cn1} ($p$ there replaced by $p-1$), the probability displayed above is bounded by  
\begin{align}
n w_n^{-p}|\{ \vvi\in\calH(n,K): i_1\le m\}|
 &\le n w_n^{-p} m    \abs{\ccbb{(i_2,\ldots,i_p)\in \calD^*_{p-1}(Kr_n ):  \ i_p\le w_n}}
\notag \\ &\le    \frac{C}{\log(n)}\rightarrow 0  \label{eq:Omegan2}
\end{align}
as $n\rightarrow\infty$.
 Then, the desired result follows from the strong law of large numbers: $\lim_{m\rightarrow\infty}\sup_{i> m} |\Gamma_i/i - 1|=0$ almost surely.
\end{proof}
\begin{proof}[Proof of Proposition \ref{prop:1}]
Consider any collection of $E_1,\dots,E_d$ as in Lemma \ref{lem:2} above. Then, based on  Lemmas \ref{lem:1} and \ref{lem:2}, the almost sure convergence $\xi_K(E^{\pm,\epsilon}_s)\rightarrow \xi_K(E_s)$   as $\epsilon\downarrow0$, 
we conclude that  as $n\rightarrow\infty$,
\[
\pp{\xi_{n,K}(E_1),\dots,\xi_{n,K}(E_d)}\weakto \pp{\xi_K(E_1),\dots,\xi_K(E_d)}.
\]
The desired result now follows.
\end{proof}
\subsection{Proof of \eqref{eq:thm sub-crit}}
\label{sec:remainder}
We first provide a uniform control on the remainder of the truncation. 
\begin{Lem}\label{lem:remainder}
We have
\equh\label{eq:uniform_control'}
\lim_{K\to\infty}\limsupn\proba\pp{\frac1{c_n}\max_{k=1,\dots,n}\abs{X_{n,k}-X_{n,k}\topp K}>\epsilon} = 0 \mfa \epsilon>0.
\eque
\end{Lem}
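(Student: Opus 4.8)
The plan is a moment estimate on the remainder of the truncation, carried out after peeling off the finitely many sub-sums indexed by ``small'' renewal labels.

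Since $w_n^{p/\alpha}/c_n=r_n^{1/\alpha}$, the quantity to control is
$r_n^{1/\alpha}\max_{k\le n}\bigl|\sum_{\vvi\in\calD_p\setminus\calH(n,K)}[\varepsilon_\vvi][\Gamma_\vvi]^{-1/\alpha}\inddd{k\in R_{n,\vvi}}\bigr|$,
and $\calD_p\setminus\calH(n,K)=\{\vvi\in\calD_p:[\vvi]>Kr_n\}\cup\{\vvi\in\calD_p:i_p>w_n\}$, the second set being empty for large $n$ when $\beta_{p-1}\ge0$ (recall $r_n/w_n\sim Cn^{-\beta_{p-1}}/\log^{p-1}n$) and requiring a separate argument otherwise. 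I would fix a large constant $M>2p/\alpha$, split each $\vvi$ according to its (maximal, possibly empty) prefix $(i_1,\dots,i_q)$, $q\le p-1$, of labels $\le M$, and treat each of the finitely many prefixes in turn. For a fixed prefix the sub-sum factorizes, and since the factor $[\Gamma_{(i_1,\dots,i_q)}]^{-1/\alpha}$ is constant in $k$ and independent of everything else,
\[
\max_{k\le n}\Bigl|\tfrac1{c_n}(\text{prefix sub-sum})_k\Bigr|
= r_n^{1/\alpha}[\Gamma_{(i_1,\dots,i_q)}]^{-1/\alpha}\max_{k\in A'\cap\{1,\dots,n\}}|T_{n,k}|,\qquad A':=\bigcap_{r=1}^q R_{n,i_r},
\]
where $T_{n,k}=\sum_{\text{tail }\vvm}[\varepsilon_\vvm][\Gamma_\vvm]^{-1/\alpha}\inddd{k\in R_{n,\vvm}}$ runs over tails $\vvm$ with all labels $>M$ lying in the relevant region.

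The prefix factor is handled by the sharp observation that, via the identity $[\Gamma_{(i_1,\dots,i_q)}]\eqd\Gamma_q^q\prod_{j=1}^{q-1}U_j$ with $U_j$ i.i.d.\ uniform independent of $\Gamma_q$, one has $\esp[\Gamma_{(i_1,\dots,i_q)}]^{-s}<\infty$ for every $s<1$ and every $q$; conditioning on it therefore costs only a moment of order $<\alpha$. If the prefix has $\beta_q<0$ then $A'\cap\{1,\dots,n\}$ is a.s.\ empty for $n$ large (its expected cardinality is of order $n^{\beta_q}$), so nothing remains. If $\beta_q\ge0$ one is reduced to a (fractional) moment of $\max_{k\in A'\cap\{1,\dots,n\}}|T_{n,k}|$, whose core ingredient is $\esp T_{n,1}^2=w_n^{-(p-q)}\sum_{\text{tail region}}\esp[\Gamma_\vvm]^{-2/\alpha}$ (Rademacher orthogonality and $\Gamma\perp R$); using $\esp[\Gamma_\vvm]^{-2/\alpha}\le C[\vvm]^{-2/\alpha}$ for $\min\vvm>M$ (\citep[(3.2)]{samorodnitsky89asymptotic}) this is governed by the combinatorial sum $\sum_{\text{tail region}}[\vvm]^{-2/\alpha}$, which is evaluated exactly as in Lemmas \ref{lem:D_p*} and \ref{lem:Cn2} by the integral/product-of-uniforms asymptotics, giving $\lesssim (Kr_n)^{-(2/\alpha-1)}\log^{p-q-1}n$ over $\{[\vvm]>K'r_n\}$ and $\lesssim w_n^{1-2/\alpha}\log^{\cdots}n$ over $\{\max\vvm>w_n,[\vvm]\le K'r_n\}$. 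When $q=0$ (no prefix) the plain second-moment bound, the cancellation $r_n\log^{p-1}n\sim Cn^{-\beta_p}$ and the identities $1=p(1-\beta)+\beta_p$, $1-q(1-\beta)=\beta_q$, make $\esp[\max_{k\le n}|\cdot|^2]$ of order $K^{-(2/\alpha-1)}$ uniformly in $n$, which $\to0$ as $K\to\infty$ — this is precisely what the $\lim_K$ in the statement is needed for; for $p=1$ there is nothing more and the lemma follows as in \citep{samorodnitsky19extremal}.

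The main obstacle is the case of a nonempty prefix with $\beta_q\ge0$, for which the crude bound $\bigl(\max_{k\in A'\cap[n]}|T_{n,k}|\bigr)^\mu\le\sum_{k\in A'\cap[n]}|T_{n,k}|^\mu$ is too lossy: the intersection carries $\sim n^{\beta_q}$ points, and absorbing that factor would require moment order $\mu>2$, which the heavy-tailed prefix ceiling $\mu<\alpha<2$ forbids. What is needed instead is a sharper control of these block maxima than the first-moment bound provides — exploiting the conditional sub-Gaussianity of the Rademacher chaos $T_{n,k}$ so that the maximum over the $\sim n^{\beta_q}$ points costs only a $\sqrt{\log n}$ rather than a power of $n$, and, when $\beta_{p-1}<0$, first performing the truncation to $i_p\le w_n$ (built into $\calH(n,K)$) to remove the local dependence among the $R_{n,\vvm}$ that would otherwise block the estimate. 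This is exactly the delicacy already flagged around Remark \ref{rem:region}; once it is in place, collecting all the bounds and using $w_n\sim Cn^{1-\beta}$, $r_n\sim Cn^{-\beta_p}/\log^{p-1}n$ drives the surviving power of $n$ strictly negative and leaves the vanishing factor in $K$.
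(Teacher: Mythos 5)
Your setup is the right one and matches the paper's: reduce $\calH(n,K)$ to $\calD_p^*(Kr_n)$ (with the separate emptiness argument for $\{\vvi: i_p>w_n\}$ when $\beta_{p-1}<0$), peel off the finitely many prefixes $(i_1,\dots,i_q)$ with labels below a fixed $M\ge 2p/\alpha$, and control each tail by Rademacher orthogonality plus $\esp[\Gamma_{\vvi_{q+1:p}}]^{-2/\alpha}\le C[i_{q+1:p}]^{-2/\alpha}$ and the product-of-uniforms integral asymptotics. The $q=0$ case is handled correctly and is where the $K\to\infty$ limit is genuinely used.

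The gap is in your ``main obstacle'' paragraph, which is both a misdiagnosis and an unfilled hole. You assert that for a nonempty prefix with $\beta_q\ge 0$ the crude bound $\max_k|\cdot|^2\le\sum_k|\cdot|^2$ cannot absorb the $\sim n^{\beta_q}$ points of $A'\cap\{1,\dots,n\}$, and that one must instead prove a sub-Gaussian maximal inequality for the Rademacher chaos $T_{n,k}$ costing only $\sqrt{\log n}$ --- and you never supply that argument, so the proof is incomplete by its own account. In fact no such refinement is needed. Bounding the maximum over \emph{all} $k=1,\dots,n$ by the sum and using $\proba(k\in R_{n,\vvi})=w_n^{-p}$ for the \emph{full} $p$-fold intersection (prefix and tail together) gives
$\esp Z_{n,i_1,\dots,i_q}(K)^2\le C\,n w_n^{-p} r_n^{2/\alpha}(Kr_n)^{1-2/\alpha}\log^{p-q-1}(Kr_n)\le C K^{1-2/\alpha}\log^{p-q-1}(Kn)/\log^{p-1}n$,
since $n r_n/w_n^p=n/c_n^{\alpha}=1/\log^{p-1}n$; this vanishes in the iterated limit for every $q$. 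Your own factored version gives the identical bound, because $\esp|A'\cap\{1,\dots,n\}|\cdot w_n^{-(p-q)}=n w_n^{-q}\cdot w_n^{-(p-q)}=n w_n^{-p}$: the $n^{\beta_q}$ count is exactly compensated by the $w_n^{-(p-q)}$ from the tail, with the surplus $\log^{p-1}n$ in $c_n^{\alpha}=n\log^{p-1}n$ left over to kill the remaining logarithms. The perceived need for $\mu>2$ arises only if you insist on taking a joint moment of the product with the heavy-tailed factor $[\Gamma_{(i_1,\dots,i_q)}]^{-1/\alpha}$; but that factor is independent of the tail, does not depend on $n$ or $K$, and is almost surely finite, so once each $Z_{n,i_1,\dots,i_q}(K)\to 0$ in probability (via Markov and the $L^2$ bound above, which does not involve the prefix Gammas at all), the product converges to $0$ in probability as well. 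No conditional sub-Gaussianity, chaining, or $\sqrt{\log n}$ savings enter the argument.
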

\begin{proof}
Recall $X_{n,k}\topp K = 
\sum_{\vvi\in\calH(n,K)}w_n^{p/\alpha}
\frac{[\varepsilon_\vvi]}{[\Gamma_\vvi]^{1/\alpha}}\inddd{k\in R_{n,\vvi}}$.
With probability tending to one as $n\rightarrow\infty$, we claim that
\[
X_{n,k}\topp K = \wt X_{n,k}\topp K:=\sum_{\vvi\in\calD_p^*(Kr_n)}w_n^{p/\alpha}
\frac{[\varepsilon_\vvi]}{[\Gamma_\vvi]^{1/\alpha}}\inddd{k\in R_{n,\vvi}}, \ k=1,\dots,n.
\]
Indeed,
 Note that if $r_n\le Cw_n$ for some constant $C>0$ (equivalently $\beta_{p-1}\ge 0$), then for $n$ large enough, $\calH(n,K) = \calD_p^*(Kr_n)$, and there is nothing to prove. So assume $\beta_{p-1}<0$.  The key observation is that for $X_{n,k}\topp K$ and $\wt X_{n,k}\topp K$ to differ for some $k$, then necessarily there exists $\vvi=(i_1,\dots,i_{p})\in\calD_p^*(Kr_n)$ such that $i_p>w_n$ and $k\in R_{n,\vvi}$. However, $i_p>w_n$ implies necessarily that $(i_1,\dots,i_{p-1})\in \calD_{p-1}^*(Kr_n/w_n)$. But, 
\equh\label{eq:p-1}
\what \calD_{p-1}^*(Kr_n/w_n):=\ccbb{\vvi\in\calD_{p-1}^*(Kr_n/w_n): R_{n,\vvi}\ne\emptyset}
\eque
is an empty set with probability tending to one as $n\to\infty$, in the case $\beta_{p-1}<0$. Indeed, using Lemma \ref{lem:rho} (with $d_n=n$), \eqref{eq:Dp} and \eqref{eq:r_n sub}, we have
\[
\esp\abs{\what \calD_{p-1}^*(Kr_n/w_n)} \le C \abs{\calD_{p-1}^*(Kr_n/w_n)} \frac n{w_n^{p-1}} \le C \frac{nr_n}{w_n^p}\log^{p-2} r_n\le \frac C{\log n}\to 0
\]
as $n\to\infty$. 

Therefore, it suffices to show \eqref{eq:uniform_control'} with $X_{n,k}\topp K $ replaced by $\wt X_{n,k}$. To do so, we introduce fixed $m\in \N$ to be chosen later. 
   By \eqref{eq:r_n sub}, a triangular inequality, and the   inequality $\max_k (\sum_\ell a_{k\ell})\le \sum_\ell \max_k (a_{k\ell})$, $a_{k\ell}\in \R$,  we have
\begin{align*}
\frac1{c_n}\max_{k=1,\dots,n}\abs{X_{n,k}-\wt X_{n,k}\topp K}&= \max_{k=1,\dots,n} \left| \sum_{\vvi\in \calD_p \setminus\calD_p^*(Kr_n)}r_n^{1/\alpha}
\frac{[\varepsilon_\vvi]}{[\Gamma_\vvi]^{1/\alpha}}\inddd{k\in R_{n,\vvi}}\right|\\ &\le  \summ q0{p-1}\sum_{1\le i_1<\cdots<i_q\le m}\frac1{[\Gamma_{\vvi_{1:q}}]^{1/\alpha}}Z_{n,i_1,\dots,i_q}(K)
\end{align*}
with
\[
Z_{n,i_1,\dots,i_q}(K):= \max_{k=1,\dots,n}  r_n^{1/\alpha}\left|\sum_{m<i_{q+1}<\cdots<i_p, i_1\ldots i_p>Kr_n} \frac{[\varepsilon_{\vvi_{q+1:p}}]}{  [\Gamma_{\vvi_{q+1:p}}]^{1/\alpha}} \inddd{k\in R_{n,\vvi}}\right|,\  q=0,\dots,p-1.
\]
When $q=0$, we understand $Z_{n,i_1,\dots,i_q}(K)$ above  as
\[
 Z_{n,\emptyset}(K):=   \max_{k=1,\dots,n}r_n^{1/\alpha}\abs{ \sum_{m <i_1<\cdots<i_p, i_1\ldots i_p>Kr_n}\frac{[\varepsilon_\vvi]}{[\Gamma_\vvi]^{1/\alpha}}\inddd{k\in R_{n,\vvi}}}.
\] 
Therefore, it suffices to show that for each $q=0,\dots,p-1$ and $i_1<\dots<i_q\le m$ fixed, 
\equh\label{eq:zeta}
\lim_{K\to\infty}\limsupn \proba\pp{Z_{n,i_1,\dots,i_q}(K)>\epsilon} = 0 \mfa \epsilon>0.
\eque
Bounding the max by a sum, using the orthogonality induced by $[\varepsilon_{\vvi_{q+1:p}}]$, the fact $P(k\in R_{n,\vvi} )=w_n^{-p}$ and  \eqref{eq:esp Gamma product bound} (for which it suffices to choose $m\ge 2 p/\alpha$), we have 
\begin{align*}
\esp Z_{n,i_1,\dots,i_q}(K)^2   & = r_n^{2/\alpha} \esp\left( 
\max_{k=1,\dots,n}    \left| \sum_{m<i_{q+1}<\cdots<i_p, [\vvi]>Kr_n} \frac{[\varepsilon_{\vvi_{q+1:p}}]}{  [\Gamma_{\vvi_{q+1:p}}]^{1/\alpha}} \inddd{k\in R_{n,\vvi}}\right|^2\right)   \\ 
&\le  C \frac{nr_n^{2/\alpha}}{w_n^p}\sum_{\substack{m<i_{q+1}<\cdots<i_p\\
 i_{q+1}\cdots i_p>Kr_n/m^{q}}}  (i_{q+1}\cdots i_p)^{-2/\alpha}.
\end{align*}
Now we estimate the multiple sum above by an integral approximation and suitable change of variables:
\begin{align*}
  \sum_{\substack{m<i_{q+1}<\cdots<i_p\\
 i_{q+1}\cdots i_p>Kr_n/m^{q}}}  &(i_{q+1}\cdots i_p)^{-2/\alpha} \le \int_{[m ,\infty)^{p-q}} (x_{q+1}\cdots x_{p})^{-2/\alpha} \inddd{(x_{q+1}+1)\cdots (x_{p}+1)>  Kr_n/m^{q}} d\vv x
\\&=   m^{(p-q)(1-2/\alpha)} \int_{[0,\infty)^{p-q}} e^{(1-2/\alpha)(y_1+\cdots+y_{p-q})} \inddd{(me^{y_1}+1)\cdots(me^{y_{p-q}}+1)>  Kr_n/m^{q}}  d\vv y
\\ & \le m^{(p-q)(1-2/\alpha)} \int_{[0,\infty)^{p-q}} e^{(1-2/\alpha)(y_1+\cdots+y_{p-q})} \inddd{y_1+\cdots+y_{p-q}>\log( K r_n)-p\log(m+1)}  d\vv y \\
&\le  C \int_{\log( K r_n) }^{\infty} e^{(1-2/\alpha) z}  z^{p-q-1} dz
\le C (Kr_n)^{1-2/\alpha}  \log^{p-q-1}(K r_n),
\end{align*}
where  in the second inequality above we   used $m e^{y_j}+1\le (m+1) e^{y_j}$, $j=1,\ldots,p-q$.
Hence with \eqref{eq:r_n sub} one concludes
\begin{align}
\esp Z_{n,i_1,\dots,i_q}(K)^2 & \le C nw_n^{-p}r_n^{2/\alpha} (Kr_n)^{1-2/\alpha} \log^{p-q-1}(K r_n)  \nonumber\\
&\le  \frac{C K^{1-2/\alpha}  \log^{p-q-1}(Kn) }{\log ^{p-1} n},\ q=0,\dots,p-1.\label{eq:1}
\end{align}
So \eqref{eq:zeta} follows   from Markov inequality, and hence   the whole proof is finished.
\end{proof}

\begin{proof}[Proof of \eqref{eq:thm sub-crit}]
Since the limit   $\calM_{\alpha}^{\rm is}(I)>0$ almost surely for any   $I\in\calG_0$, it suffices to show for any $x_i>0$ and any    $I_i\in \calG_0$, $i=1,\ldots,d$, that
\begin{align}
\limn\proba\pp{\frac1{c_n}M_n(I_i)\le x_i, i=1,\dots,d} & = \proba\pp{\calM_\alpha^{\rm is}(I_i)\le x_i, i=1,\dots,d}\nonumber\\
& = \proba\pp{\xi\pp{\bigcup_{i=1}^d\pp{(x_i,\infty]\times I_i}} = 0}.\label{eq:fdd goal subcrit}
\end{align}
We have established  Proposition \ref{prop:1}, which  implies that  
\begin{align}
&\limn \proba\pp{\xi_{n,K}\pp{\bigcup_{i=1}^d\pp{(x_i,\infty]\times I_i}} = 0}=\limn \proba\pp{\frac1{c_n}\max_{j/k_n\in G_i, \what  m_{n,j}\topp K>0}\what m_{n,j}\topp K\le x_i, i=1,\dots,d}\nonumber  \\  =& \limn \proba\pp{\frac1{c_n}\max_{j/k_n\in I_i }\what m_{n,j}\topp K\le x_i, i=1,\dots,d}= \proba\pp{\xi_K\pp{\bigcup_{i=1}^d\pp{(x_i,\infty]\times I_i}} = 0}.\label{eq:wt xi''}
\end{align}
where in the first equality in the line of \eqref{eq:wt xi''}, we have used the assumption $x_i>0$, $i=1,\ldots,d$ and the convention $\max_\emptyset=-\infty$.
This will be shown to imply that
\equh\label{eq:xi_K limit}
\limn \proba\pp{\frac1{c_n}\max_{k/n\in G_i}\what X_{n,k}\topp K\le x_i, i=1,\dots,d} = \proba\pp{\xi_K\pp{\bigcup_{i=1}^d\pp{(x_i,\infty]\times I_i}} = 0}.
\eque
  Indeed, for any open sub-interval $I\subset [0,1]$ and large enough $n$, the two terms  $c_n^{-1}\max_{k/n\in I}X_{n,k}\topp K$ and $ c_n^{-1} \max_{j/k_n\in I}\what m_{n,j}\topp K$ could differ due to two possibilities: first, the sets $\{k\in \N: k/n\in I\}$ and $\{k\in \calI_{n,j}: j/k_n\in I \}$ possibly differ near the boundaries of $I$  with at most $O(b_n)$ different indices; second, if  $\sup I=1$,   the first term involves also the  last partial block $\calI_{n,k_n+1}:=\{d_nk_n+1,\dots,n\}$. 
   Suppose first $I=(a,b)$ with $0<a<b<1$. It is clear that for $\epsilon>0$ small enough so that $0<a-\epsilon<a+\epsilon<b-\epsilon<b+\epsilon<1$, and $n$ large enough, we have
\[
\max_{j/k_n\in (a+\epsilon,b-\epsilon)}\what{m}_{n,j}\topp K\le \max_{k/n\in (a,b)}X_{n,k}\topp K\le \max_{j/k_n\in (a-\epsilon,b+\epsilon)}\what{m}_{n,j}\topp K.
\]
Therefore, \eqref{eq:xi_K limit} follows from a sandwich approximation argument based on \eqref{eq:wt xi''}. Similar arguments applies if $0=\inf I<\sup I<1$. When $\sup I=1$, it suffices to additionally apply  the following fact  due to stationarity: $ c_n^{-1}\max_{k\in \calI_{n,k_n+1}}X_{n,k}\topp K \eqd   c_n^{-1}\max_{k=1,\ldots,n-k_nd_n}X_{n,k}\topp K=o_p(1)$, where the $o_p(1)$ claim again follows from  a similar approximation argument using \eqref{eq:wt xi''}.

Now   \eqref{eq:xi_K limit}  has been verified.    The desired convergence in \eqref{eq:fdd goal subcrit} follows from a standard triangular-array approximation (e.g.~\citep[Theorem 3.2]{billingsley99convergence}) based on  Lemma \ref{lem:remainder}. To do so, notice that when $n$ is large enough so that each of the $\max_{k/n\in I_i}$ below is over an nonempty set,  we   have \[
\abs{\frac1{c_n}\max_{k/n\in I_i}X_{n,k} - \frac1{c_n}\max_{k/n\in I_i}X_{n,k}\topp K}\le \frac1{c_n} \max_{k=1,\dots,n}\abs{X_{n,k}-X_{n,k}\topp K}, \quad i=1,\ldots,d.
\]
In addition, one readily checks $\xi_K\weakto \xi$ so that the last expression of \eqref{eq:wt xi''} tends to the last expression of \eqref{eq:fdd goal subcrit} as $K\to\infty$. Hence the triangular-array approximation is justified.
\end{proof}
\begin{Rem}\label{rem:region}
In Lemma \ref{lem:2},  if we do not work with $\calH(n,K)$ but always with $\calD_p^*(Kr_n)$ when $\beta_{p-1}<0$, then the two-moment method fails. In particular,
the convergence $b_{n,2}(K)\to 0$ (now $b_{n,2}(K)$ is summing over a larger region) no longer holds, and moreover 
 we do not have the same asymptotics for $\esp \wb \eta_{n,K}(E)$. The delicate picture is due to the emergence of conditional clusters of unbounded size. Consider  
 \[
\what \calD_p^*(Kr_n):=  \ccbb{\vvi\in\what \calD_p^*(Kr_n): R_{n,\vvi}\ne\emptyset} = \what \calD_p^{*,1}(Kr_n)\cup\what \calD_p^{*,2}(Kr_n),
 \]
 with
 \begin{align*}
 \what \calD_p^{*,1}(Kr_n) & :  =  \ccbb{\vvi\in\what \calD_p^*(Kr_n): R_{n,\vvi}\ne\emptyset, i_p\le w_n},\\
 \what \calD_p^{*,2}(Kr_n) & :  =  \ccbb{\vvi\in\what \calD_p^*(Kr_n): R_{n,\vvi}\ne\emptyset, i_p> w_n},
 \end{align*}
and
recall $\what \calD_{p-1}^*(Kr_n/w_n)$ as in \eqref{eq:p-1}. We have shown that $\what \calD_{p-1}^*(Kr_n/w_n)$ is an empty set as $n\to\infty$ with probability going to one, and hence so is the set $\what \calD_p^{*,2}(Kr_n)$ (so there is no contribution from random variables associated to $\what \calD_p^{*,2}(Kr_n)$ in the Poisson limit). However, one can verify that $\esp|\what \calD_p^{*,i}(Kr_n)|, i=1,2$, are of the same order $ w_n^p/n $. This necessarily implies that given that $\what \calD_p^{*,2}(Kr_n)$ is non-empty (say with probability $p_n\downarrow 0$),   its size is of order $1/p_n\to\infty$. 

Similar phenomena are known in limit theorems for extremes for other models with delicate dependence. See for example \citep{kistler15derrida} for the explanation of a similar situation regarding extremes for the random energy model. 
\end{Rem}

\section{Convergence for random sup-measures,  $\beta_p =0$}\label{sec:critical}

Throughout this section, assume $\beta_p=0$, or equivalently
\[
\beta = \frac{p-1}p, \quad p\ge2.
\]
The goal of this section is to prove the following case of Theorem \ref{thm:RSM} (recall the  characterization of the weak convergence in the space of sup-measures   discussed around \eqref{eq:fdd RSM conv}): with $\beta_p = 0$:
\equh\label{eq:thm crit}
\frac1{c_n}\ccbb{M_n(I)}_{I\in\calG_0}\fddto
\ccbb{\mathfrak C_{F,p}^{1/\alpha}\calM_{\alpha}^{\rm is}(I)}_{I\in\calG_0}
\eque
where
\equh\label{eq:cFp crit}
c_n = \pp{\frac{n(\log\log n)^{p-1}}{\log n}}^{1/\alpha} \qmand 
\mathfrak C_{F,p} = \frac12\frac{(\mathsf C_F\Gamma(\beta)\Gamma(1-\beta))^p}{p!(p-1)!}.
\eque
Again we start by presenting an overview of the approximations.
Recall that $\calI_{n,j} = \{(j-1)d_n+1,\dots,jd_n\}, j=1,\dots,k_n = \floor{n/d_n}$, but this time we shall specifically work with
\equh\label{eq:d_n critical}
d_n \sim \frac n{\log ^ p n} \qmand k_n \sim \frac n{d_n} \sim \log ^{p}n.
\eque
Accordingly, we have now
\equh\label{eq:r_n critical}
r_n = \frac{w_n^p}{c_n^\alpha}\sim  \pp{\frac{\mathsf C_F}{1-\beta}}^p\frac{ \log n}{(\log\log n)^{p-1}}.
\eque
We emphasize that we use the same symbols $d_n, k_n, c_n, r_n$ as in Section \ref{sec:sub-critical}, but they are defined differently here. We also use the same definitions for random variables $\xi_{n,K}, \eta_{n,K}, \wb\eta_{n,K}$ defined as in the previous section, but now with the rates chosen above. 

The key behind \eqref{eq:thm crit} is again the following (in place of Proposition \ref{prop:1}), but now with a different normalizing constant in the limit $\xi$ and hence $\xi_K$.
\begin{Prop}\label{prop:2}
With $\beta_p<0$, $d_n\to\infty$,
\[
 \xi_{n,K}  := \summ j1{k_n}\ddelta{\what m_{n,j}\topp K/c_n,j/k_n}\inddd{\what m_{n,j}\topp K> 0} \weakto  \xi_K:=\sif\ell1 \ddelta{\mathfrak C_{F,p}
 ^{1/\alpha}
  \Gamma_\ell^{-1/\alpha},U_\ell}\inddd{\mathfrak C_{F,p} 
 ^{1/\alpha}
 \Gamma_\ell^{-1/\alpha}>K},
\]
in $\mathfrak M_p((0,\infty]\times [0,1])$, where $\mathfrak C_{F,p}$ is as in \eqref{eq:cFp crit}. 
\end{Prop}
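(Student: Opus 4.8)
\emph{Plan.} I would prove Proposition~\ref{prop:2} by following verbatim the architecture of the sub-critical case: the approximation chain $\xi_{n,K}\approx\eta_{n,K}\approx\wb\eta_{n,K}\weakto\xi_K$ from \eqref{eq:structure}, with every step replaced by its critical-rate counterpart. Concretely, first establish the analogue of Lemma~\ref{lem:2} (joint Poisson convergence of $\wb\eta_{n,K}$ over test sets $E_s=(y_{s,0},y_{s,1}]\times\wt I_s$) via the two-moment method of \citet[Theorem~2]{arratia89two}, i.e.\ by verifying \eqref{eq:goal two moments}; then the analogue of Lemma~\ref{lem:1} (so that $\xi_{n,K}$, $\eta_{n,K}$, $\wb\eta_{n,K}$ coincide up to the $E^{\pm,\epsilon}$ perturbation on events of probability $\to1$), for which one needs $\proba(\Omega_{n,1}(K))\to1$ and $\proba(\Omega_{n,2}(K,\epsilon))\to1$; and finally combine these with the continuity $\xi_K(E^{\pm,\epsilon})\to\xi_K(E)$ as $\epsilon\downarrow0$. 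Every argument not involving the precise renewal asymptotics transfers unchanged.

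\emph{New renewal estimates.} Four ingredients must be recomputed for the rates \eqref{eq:d_n critical}--\eqref{eq:r_n critical}. A useful structural simplification is that here $r_n\asymp\log n/(\log\log n)^{p-1}$ is much smaller than $w_n\asymp n^{1/p}$ (equivalently $\beta_{p-1}=1/p>0$), so $\calH(n,K)=\calD_p^*(Kr_n)$ for all large $n$ and the auxiliary set $\calH$ plays no role. Hence $C_{n,1}(K)=|\calD_p^*(Kr_n)|\sim Kr_n(\log\log n)^{p-1}/(p!(p-1)!)$ by Lemma~\ref{lem:D_p*}, using $\log(Kr_n)\sim\log\log n$. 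The genuinely new computation is $\rho_n$: since the intersected renewal process $\vv\eta$ is null-recurrent here, its terminating probability $\mathfrak q_{F,p}$ vanishes, so the argument of Lemma~\ref{lem:rho} must instead be fed the slowly-varying tail $\proba(\eta_1>m)=\wb F_p(m)\sim(\mathsf C_F\Gamma(\beta)\Gamma(1-\beta))^p/\log m$ from \eqref{eq:wb F_p crit}; the last-renewal-in-block decomposition then gives
\[
\rho_n=\frac1{2w_n^p}\summ m0{d_n-1}\wb F_p(m)\sim\frac{(\mathsf C_F\Gamma(\beta)\Gamma(1-\beta))^p}{2w_n^p}\cdot\frac{d_n}{\log d_n}\sim\frac{(\mathsf C_F\Gamma(\beta)\Gamma(1-\beta))^p\,d_n}{2w_n^p\log n},
\]
using $\log d_n\sim\log n$. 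Plugging these into $\esp\wb\eta_{n,K}(E)\sim(C_{n,1}(y_0^{-\alpha})-C_{n,1}(y_1^{-\alpha}))(b-a)k_n\rho_n$, the factor $r_n(\log\log n)^{p-1}k_nd_n/(w_n^p\log n)$ equals $1$ identically (this is exactly why $d_n\sim n/\log^pn$ and $c_n=(n(\log\log n)^{p-1}/\log n)^{1/\alpha}$ are the right scalings), and one lands on $\mathfrak C_{F,p}(y_0^{-\alpha}-y_1^{-\alpha})(b-a)=\esp\xi_K(E)$ with $\mathfrak C_{F,p}$ precisely as in \eqref{eq:cFp crit}. The remaining two ingredients are $C_{n,2}(r,K)$ and $\rho_n(r)$; note that in this regime $\beta_r=1-r/p>0$ for every $r=1,\dots,p-1$, which removes the case distinctions of Lemmas~\ref{lem:Cn2} and~\ref{lem:rho_n(r)}.

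\emph{The main obstacle.} The crux is the vanishing of $b_{n,1}(K)+b_{n,2}(K)$ in \eqref{eq:goal two moments}. The term $b_{n,1}(K)=k_n^2\rho_n^2\summ r1pC_{n,2}(r,K)$ is routine and does tend to $0$ with the choices above. The obstruction is $b_{n,2}(K)=\summ r1pC_{n,2}(r,K)\rho_n(r)$: because $\vv\eta$ is null-recurrent rather than terminating, a single renewal set $R_{n,\vvi}$ can meet several of the macroscopic blocks $\calI_{n,j}$, and the crude bounds on $\rho_n(r)$ that follow from $u(m)\le Cm^{\beta-1}$ alone (as in Lemma~\ref{lem:rho_n(r)}) are only of constant order, while $C_{n,2}(r,K)$ is of order $(\log n)^2$, so the product does not vanish. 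Overcoming this requires using the \emph{precise} slowly/regularly varying asymptotics of $\wb F_p$ and $u$: the probability that a renewal set, having entered one block, returns in another block across a macroscopic gap carries an extra factor of order $(\log n)^{-2}$ --- quantitatively, conditionally on meeting one block a used renewal set meets exactly one with probability tending to $1$. Feeding this refinement into $\rho_n(r)$ yields $C_{n,2}(r,K)\rho_n(r)\to0$ for every $r$; the same refined bookkeeping also controls $\proba(\Omega_{n,1}(K)^c)$ in the Lemma~\ref{lem:1} analogue (which is dominated by $b_{n,2}$-type sums) and, via a union bound together with $\rho_n$ and $|\calD_{p-1}^*(Kr_n)|$, the event $\Omega_{n,2}(K,\epsilon)$. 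This sharpened two-moment estimate is exactly the point where the critical regime departs from the sub-critical one, and carrying it out is the bulk of the argument (cf.\ Remark~\ref{rem:difference}).

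\emph{Conclusion.} Once the analogues of Lemmas~\ref{lem:1} and~\ref{lem:2} are in hand, Proposition~\ref{prop:2} follows exactly as Proposition~\ref{prop:1} did. The deduction of the full statement \eqref{eq:thm crit} from Proposition~\ref{prop:2} then proceeds as in Section~\ref{sec:remainder}: the truncation remainder is controlled by the analogue of Lemma~\ref{lem:remainder} with the new rates (its second-moment bounds go through verbatim), and the comparison between block maxima and the empirical random sup-measure is the same sandwich argument; the only simplification is that the set $\what\calD_{p-1}^*(Kr_n/w_n)$ of \eqref{eq:p-1} is now \emph{empty} for all large $n$, since $r_n/w_n\to0$ forces $Kr_n/w_n<(p-1)!$.
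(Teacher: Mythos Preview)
Your plan for Proposition~\ref{prop:2} is the paper's plan: the same chain $\xi_{n,K}\approx\eta_{n,K}\approx\wb\eta_{n,K}\weakto\xi_K$, with the two-moment method for the last arrow and the events $\Omega_{n,1},\Omega_{n,2}$ for the first two. Your recomputations of $C_{n,1}(K)$, $\rho_n$, and the resulting constant $\mathfrak C_{F,p}$ are correct, and you correctly isolate $b_{n,2}(K)$ as the crux.

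Two corrections are worth flagging. First, within the proof of the proposition itself, your heuristic ``an extra factor of $(\log n)^{-2}$ across a macroscopic gap'' covers the off-diagonal summands of $\rho_n(r)$ but not the whole story. The paper splits $\rho_n(r)$ into three pieces: (i) blocks at distance $\ge2$ (Lemma~\ref{lem:diagonal main}), handled by a first/last-renewal decomposition that inserts a $\wb F_p$ factor and indeed yields $C/\log^2 d_n$ for $r<p$; (ii) the same-block case $j=j'$ with $r<p$ (Lemma~\ref{lem:critical same block}), bounded by $C/(k_n^{\beta_r+1}\log d_n)$; and (iii) the adjacent-block case $|j-j'|=1$ with $r=p$ (Lemma~\ref{lem:critical diagonal}), which is \emph{not} a macroscopic gap and for which the method of (i) gives nothing useful. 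The paper handles (iii) by introducing an auxiliary cutoff $s_n\ll d_n$ to separate ``first renewal near the block boundary'' from ``first renewal deep inside,'' balancing to obtain only $C/\log^{3/2}d_n$. Since $C_{n,2}(p,K)=C_{n,1}(K)\asymp\log n$, this weaker bound is what makes $C_{n,2}(p,K)\rho_n(p)\le C/\log^{1/2}n\to0$; your $(\log n)^{-2}$ heuristic, read literally, overstates the gain here.

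Second, your final paragraph contains a genuine error, though it concerns the passage from Proposition~\ref{prop:2} to \eqref{eq:thm crit} rather than the proposition itself. The remainder bound does \emph{not} go through verbatim: with the critical rates, the $L^2$ estimate \eqref{eq:1} gives $nw_n^{-p}r_n\log^{p-q-1}r_n\asymp\log n/(\log\log n)^q\to\infty$, so it fails outright. The paper (Lemma~\ref{lem:remainder1}) repairs this by first applying the $L^2$ bound over the larger region $[\vvi]>K\wt r_n$ with $\wt r_n=r_n(\log n)^{1/(2/\alpha-1)}$, and then controlling the leftover strip $Kr_n<[\vvi]\le K\wt r_n$ pathwise on the event that at most one such $\vvi$ contributes at any $k$. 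Your claim that the set in \eqref{eq:p-1} is empty for large $n$ is correct (indeed trivially so here), but that observation only dispenses with the $\calH$-vs-$\calD_p^*$ discrepancy, not with the remainder itself.
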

The proof is similar as the one with $\beta_p<0$ in Section \ref{sec:sub-critical}, and  proceeds by a series of approximations as summarized below in place of  \eqref{eq:structure} and \eqref{eq:structure1}. For the proof of Proposition \ref{prop:2}, for some test set $E$ as in \eqref{eq:E}, we have
\[
\xi_{n,K}(E) \quad\underbrace{\approx\quad \eta_{n,K}(E)\quad \approx}_{\rm Lemma~\ref{lem:1 crit}}\quad \wb\eta_{n,K}(E) \underbrace{\weakto}_{\rm Lemma~\ref{lem:2 crit}}\xi_K(E).
\]
Then, for the proof of, say the marginal convergence in \eqref{eq:thm crit}, this time we have
\[
\proba\pp{M_n(G)\le y} \underbrace\approx_{\rm Lemma~\ref{lem:remainder1}} \proba\pp{M_{n,K}(G)\le y} \approx
\proba\pp{ \xi_{n,K}( (y,\infty]\times G) = 0}.
\]
 Lemmas \ref{lem:1 crit}, 
    \ref{lem:2 crit} and \ref{lem:remainder1} respectively correspond to   Lemmas \ref{lem:1}, \ref{lem:2} and  \ref{lem:remainder}  for the subcrtical case $\beta_p<0$ in Section \ref{sec:sub-critical}.

\begin{Rem}\label{rem:difference}
 Despite the fact that in both regimes the limit extremes are independently scattered, 
geometries underlying the Poisson approximation for local extremes are different. Therefore so are the proofs for Lemmas   \ref{lem:2 crit} and \ref{lem:remainder1}  and Lemmas \ref{lem:2} and  \ref{lem:remainder} mentioned above. The different geometries can be viewed from the following two aspects.
  \begin{enumerate}[(a)]
\item The shape of the region that contributes in each regime is different. In the sub-critical regime, it is possible that $r_n\gg w_n$ (this is the case if $\beta_{p-1}<0$, and the shape has a delicate influence in the limit through the constant $\mathsf D_{\beta,p}$), while at critical regime $r_n\ll w_n$ by our assumption. In particular, for $n$ large enough $\calH(n,K) = \calD_p^*(Kr_n)$ and there is no need to work with $\calH(n,K)$. 
\item 
Second, in terms of the size $d_n$ reflecting the size of each local extremal cluster, we see that the order is drastically different from the sub-critical regime. Recall that in that regime only $d_n\to\infty$ and $n/d_n\to\infty$ are assumed. Here, however, we have more restrictive constraints on both upper and lower bounds of the rate. Indeed, in order to apply the two-moment method we actually need (see Lemmas \ref{lem:Cn critical} and \ref{lem:rho_n(r) critical} below)
\equh\label{eq:k_n}
k_n^{\beta_{p-1}} = k_n^{1/p}\ge C\log n \qmand \log k_n = o(\log d_n). 
\eque
For a lower bound on the rate of $d_n\to\infty$, 
the first part of Lemma \ref{lem:rho_n(r) critical} enforces the constraint
\[
C_{n,1}(K)k_n\proba(R_{n,\vvi}\cap\calI_{n,1}) \sim C\frac{\log n \cdot k_nd_n}{n\log d_n} = O(1),
\]
which implies already a lower bound $\log d_n\ge C \log n$ on the rate $d_n\to\infty$. The estimate above is sharp and hence $d_n\to\infty$ {\em has to grow at least  at a polynomial rate}, which is different from the sub-critical regime.
\end{enumerate}
\end{Rem}
\begin{Rem}
 The more restrictive second constraint in \eqref{eq:k_n} on the lower bound, coming from \eqref{eq:critical off diagonal r=p},  requires that $d_n\sim nL(n)$ for some slowly varying function $L$ as $n\to\infty$.
We do not know whether \eqref{eq:critical off diagonal r=p} is optimal.
For the upper bound on $d_n\to\infty$, the first constraint in \eqref{eq:k_n}, coming from Lemma \ref{lem:critical same block} below, is more restrictive than $n/d_n\to\infty$. We do not know if our estimate on this part is optimal either. 
\end{Rem}

\subsection{Two-moment method}
\begin{Lem}\label{lem:2 crit}
With $\beta_p=0$, we have
\[
 \wb\eta_{n,K}
= \summ j1{k_n}\sum_{\vvi\in \calD^*_p(n,K)}\ddelta{([\vv i]/ r_n)^{-1/\alpha},j/k_n} \inddd{R_{n,\vvi}\cap \calI_{n,j}\ne\emptyset,\ [\varepsilon_\vvi] =1}
\weakto\xi_K
\]
as $n\to\infty$ in $\mathfrak M_p((0,\infty]\times[0,1])$. 
\end{Lem}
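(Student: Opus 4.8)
The plan is to follow the proof of Lemma~\ref{lem:2} line by line, applying the two-moment method of \citet[Theorem~2]{arratia89two} to the Bernoulli variables $\chi_{\vvi,j}=\inddd{R_{n,\vvi}\cap\calI_{n,j}\ne\emptyset,\ [\varepsilon_\vvi]=1}$, this time with the rates $d_n,k_n,r_n$ fixed by \eqref{eq:d_n critical}--\eqref{eq:r_n critical}. Since $\beta_{p-1}=1/p>0$ in the critical regime, one has $r_n\le Kw_n$ for all large $n$, hence $\calH(n,K)=\calD_p^*(Kr_n)$, and the auxiliary truncation at $i_p\le w_n$ is not needed; we work directly with $\calD_p^*(Kr_n)$ throughout. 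The dependency graph is unchanged --- $(\vvi,j)\sim(\vvi',j')$ iff $\vvi\cap\vvi'\ne\emptyset$ --- and by \citet[Theorem~2]{arratia89two} the finite-dimensional Poisson convergence over disjoint test sets $E_s=(y_{s,0},y_{s,1}]\times\wt I_s$ of the form \eqref{eq:E} reduces, just as in \eqref{eq:goal two moments}, to verifying (i) $\limn\esp\wb\eta_{n,K}(E_s)=\esp\xi_K(E_s)$ for each $s$, and (ii) $\limn\bigl(b_{n,1}(K)+b_{n,2}(K)\bigr)=0$ with $b_{n,1},b_{n,2}$ as in \eqref{eq:b_n1}--\eqref{eq:b_n2}. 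Upgrading the finite-dimensional convergence to $\wb\eta_{n,K}\weakto\xi_K$ in $\mathfrak M_p((0,\infty]\times[0,1])$ is then the same soft step (\citet[Theorem~4.11]{kallenberg17random} combined with the Cram\'er--Wold device) used at the end of the proof of Lemma~\ref{lem:2}.

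For (i) the two ingredients are the counting asymptotics of Lemma~\ref{lem:D_p*}, which give $|\calD_p^*(Kr_n)|\sim Kr_n\log^{p-1}(Kr_n)/(p!(p-1)!)$, combined with $\log(Kr_n)\sim\log\log n$ from \eqref{eq:r_n critical} so that $|\calD_p^*(Kr_n)|\sim \tfrac{K}{p!(p-1)!}(\mathsf C_F/(1-\beta))^p\log n$; and a critical-regime replacement for Lemma~\ref{lem:rho}, which I would obtain by the same last-renewal-in-block decomposition, now giving $\proba(R_{n,\vvi}\cap\calI_{n,1}\ne\emptyset)=w_n^{-p}\sum_{m=0}^{d_n-1}\wb F_p(m)\sim(\mathsf C_F\Gamma(\beta)\Gamma(1-\beta))^p\,d_n/(w_n^p\log d_n)$, where $\wb F_p(m)\sim(\mathsf C_F\Gamma(\beta)\Gamma(1-\beta))^p/\log m$ by \eqref{eq:wb F_p crit} (the boundary effects near $n$ being negligible since $d_n=o(n)$). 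Feeding these into $\esp\wb\eta_{n,K}(E_s)\sim\bigl(|\calD_p^*(y_{s,0}^{-\alpha}r_n)|-|\calD_p^*(y_{s,1}^{-\alpha}r_n)|\bigr)\,\Leb(\wt I_s)\,k_n\,\rho_n$ and substituting $w_n^p\sim(\mathsf C_F/(1-\beta))^p n$, $k_n\sim\log^p n$, $d_n\sim n/\log^p n$, $\log d_n\sim\log n$, all the powers of $\log n$ and $\log\log n$ cancel and the limit is $(y_{s,0}^{-\alpha}-y_{s,1}^{-\alpha})\,\Leb(\wt I_s)\,\mathfrak C_{F,p}=\esp\xi_K(E_s)$ with $\mathfrak C_{F,p}$ as in \eqref{eq:cFp crit}; this cancellation is precisely what pins down the normalization $c_n$.

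For (ii) I would establish critical-regime analogs of Lemmas~\ref{lem:Cn2} and \ref{lem:rho_n(r)} bounding $C_{n,2}(r,K)$ and $\rho_n(r)$ for the rates \eqref{eq:d_n critical}--\eqref{eq:r_n critical}, and then check $k_n^2\rho_n^2\sum_{r=1}^pC_{n,2}(r,K)\to0$ and $\sum_{r=1}^pC_{n,2}(r,K)\rho_n(r)\to0$. For $r=p$ one has $C_{n,2}(p,K)=C_{n,1}(K)\sim C\log n$ and $k_n^2\rho_n^2\sim C/\log^2 n$, so the contribution to $b_{n,1}$ is $O(1/\log n)$; the delicate part --- and what I expect to be the main obstacle --- is $b_{n,2}(K)$, where bounding $\rho_n(p)$ requires splitting into the off-block part $|j-j'|>1$ and the near-block part $|j-j'|\le1$ as in Lemma~\ref{lem:rho_n(r)}, and since the relevant renewal-mass sums $\sum_\ell u_p(\ell)$ are only logarithmically small one is forced to impose both constraints in \eqref{eq:k_n}, namely $k_n^{1/p}\ge C\log n$ and $\log k_n=o(\log d_n)$, together with the lower bound $\log d_n\ge C\log n$ that makes $d_n$ necessarily grow at a polynomial rate --- the essential departure from the sub-critical case, where only $d_n\to\infty$ and $n/d_n\to\infty$ were assumed. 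Once these constraints are in force, the critical analogs of the counting lemmas make $b_{n,1}(K)\to0$ and $b_{n,2}(K)\to0$ term by term, which completes (ii) and hence the proof.
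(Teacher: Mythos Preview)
Your proposal is correct and follows essentially the same route as the paper: apply the two-moment method of \citet{arratia89two}, verify the convergence of means via the counting asymptotics $C_{n,1}(K)\sim \tfrac{K}{p!(p-1)!}(\mathsf C_F/(1-\beta))^p\log n$ and the critical-regime $\rho_n$ obtained from the last-renewal decomposition with $\wb F_p$ from \eqref{eq:wb F_p crit}, and then control $b_{n,1},b_{n,2}$ through critical analogs of Lemmas~\ref{lem:Cn2} and~\ref{lem:rho_n(r)}. The only refinement worth flagging is that the paper's decomposition of $\rho_n(r)$ is slightly finer than your ``$|j-j'|>1$ versus $|j-j'|\le1$'': for $r<p$ the split is $|j-j'|\ge1$ (Lemma~\ref{lem:diagonal main}) versus $j=j'$ (Lemma~\ref{lem:critical same block}), while for $r=p$ (where $j=j'$ is excluded) it is $|j-j'|\ge2$ versus $|j-j'|=1$, the latter handled by an additional scale $s_n$ in Lemma~\ref{lem:critical diagonal} --- but this is a matter of bookkeeping, not strategy.
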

Again we check the conditions of two-moments method as  in \eqref{eq:goal two moments}. Recall that $C_{n,1}(K)$, $C_{n,2}(r,K)$, $\rho_n$ and $\rho_n(r)$ are defined the same way as before (see \eqref{eq:Cn1 def}, \eqref{eq:Cn2 def}, \eqref{eq:rho_n}, \eqref{eq:rho_n i j}), but with different rates $r_n, c_n, d_n, k_n$. 
The following estimates collect results that are similar as in Section \ref{sec:sub-critical}.
\begin{Lem}\label{lem:Cn critical}
We have
\[
C_{n,1}(K)\sim \frac{K }{p!(p-1)!}\pp{\frac{\mathsf C_F}{1-\beta}}^p\log n, 
\]
and
\equh\label{eq:Cn2 critical}
C_{n,2}(r,K) \le C\frac{\log^{2}n}{(\log\log n)^{2r}},r=1,\dots,p-1.
\eque
Furthermore, as $n$ and $d_n \rightarrow \infty$,  
\equh\label{eq:rho_n crit}
\rho_n=\frac12\cdot \proba\pp{R_{n,\vvi}\cap \calI_{n,j}\ne\emptyset} \sim \frac12\frac{d_n}{w_n^p}\frac{(\mathsf C_F\Gamma(\beta)\Gamma(1-\beta))^p}{\log d_n}. 
\eque
\end{Lem}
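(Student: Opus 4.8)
The plan is to prove the three displays of the lemma in turn, adapting Lemmas~\ref{lem:D_p*}, \ref{lem:Cn2} and \ref{lem:rho} to the critical rates \eqref{eq:d_n critical}--\eqref{eq:r_n critical} and to the critical-regime renewal asymptotics \eqref{eq:u_p} and \eqref{eq:wb F_p crit}. A preliminary observation streamlines everything: here $r_n\to\infty$ only at a logarithmic rate while $w_n\sim(\mathsf C_F/(1-\beta))n^{1/p}$, so $Kr_n\le w_n$ for all large $n$; consequently $\calH(n,K)=\calD_p^*(Kr_n)$ eventually and the constraint $i_p\le w_n$ is inactive. Moreover, since $\beta=(p-1)/p$ we have $\beta_r=1-r/p\in(0,1)$ for every $r=1,\dots,p-1$, so only the ``$\beta_r>0$'' cases of the earlier estimates are relevant. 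Finally, by exchangeability of the i.i.d.\ copies $\{R_{n,i}\}$ together with the shift-invariance property~(i) of $R_n$ listed after \eqref{eq:p>=1}, the quantity $\proba(R_{n,\vvi}\cap\calI_{n,j}\ne\emptyset)$ depends neither on $j$ nor on $\vvi\in\calD_p$, so one may take $\vvi=(1,\dots,p)$ and $\calI_{n,1}=\{1,\dots,d_n\}$.

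For $C_{n,1}(K)$, I would apply Lemma~\ref{lem:D_p*} with the diverging argument $Kr_n$, obtaining $C_{n,1}(K)=|\calD_p^*(Kr_n)|\sim Kr_n(\log(Kr_n))^{p-1}/(p!(p-1)!)$; since $\log(Kr_n)\sim\log\log n$ and $r_n(\log\log n)^{p-1}\sim(\mathsf C_F/(1-\beta))^p\log n$ by \eqref{eq:r_n critical}, this is the claimed asymptotic. For $C_{n,2}(r,K)$ with $1\le r\le p-1$, the integral bound $C_{n,2}(r,K)\le Cr_n^2(\log r_n)^{2(p-r-1)}$ derived in \eqref{eq:Cn2} applies verbatim --- its derivation uses only the product constraints defining $\calD_p^*$, not the sign of $\beta_p$, and the count over $\calH(n,K)$ is dominated by the count over $\calD_p^*(Kr_n)$. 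Plugging in $r_n\sim C\log n/(\log\log n)^{p-1}$, $\log r_n\sim\log\log n$, and using $2(p-1)-2(p-r-1)=2r$, gives \eqref{eq:Cn2 critical}.

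The estimate for $\rho_n$ is the one step where the sub-critical argument genuinely fails: there one decomposes according to the global maximum of $R_{n,(1,\dots,p)}$, but in the critical regime $\mathfrak q_{F,p}=0$, so $R_{n,(1,\dots,p)}$ is almost surely either empty or infinite and that decomposition is vacuous. Instead I would decompose $2\rho_n=\proba(R_{n,(1,\dots,p)}\cap\{1,\dots,d_n\}\ne\emptyset)$ according to the last common renewal that falls in the block $\{1,\dots,d_n\}$, which is a.s.\ finite and well defined on that event. Using $\proba(i\in R_{n,(1,\dots,p)})=w_n^{-p}$ from property~(i) and the renewal/Markov property~(ii) applied to each of the $p$ independent copies, conditionally on $i\in R_{n,(1,\dots,p)}$ the post-$i$ behaviour of each $R_{n,k}$ is that of a fresh renewal process, so the common renewals in $\{i+1,i+2,\dots\}$ are distributed as $i$ plus those of a fresh copy of $\vv\eta$; hence there is no common renewal in $\{i+1,\dots,d_n\}$ precisely when $\eta_1>d_n-i$, an event of probability $\wb F_p(d_n-i)$. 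Summing over $i=1,\dots,d_n$ gives the identity $2\rho_n=w_n^{-p}\sum_{m=0}^{d_n-1}\wb F_p(m)$, and then \eqref{eq:wb F_p crit} together with $\sum_{m=2}^{M}1/\log m\sim M/\log M$ (comparison with the logarithmic integral) yields $2\rho_n\sim w_n^{-p}(\mathsf C_F\Gamma(\beta)\Gamma(1-\beta))^p\,d_n/\log d_n$, i.e.\ \eqref{eq:rho_n crit}.

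The main obstacle is precisely this last computation: one must recognize that the last-renewal decomposition has to be taken relative to the block $\calI_{n,1}$ rather than globally (the global maximum being degenerate when $\beta_p=0$), after which the relevant one-step probability becomes $\wb F_p$ evaluated at the residual block length and the remaining sums are handled by the renewal asymptotics \eqref{eq:u_p}--\eqref{eq:wb F_p crit} and elementary estimates. The two counting bounds then reduce to inserting the critical rates \eqref{eq:d_n critical}--\eqref{eq:r_n critical} into identities already established in Section~\ref{sec:sub-critical}.
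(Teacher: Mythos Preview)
Your proposal is correct and follows essentially the same approach as the paper: for $C_{n,1}(K)$ the paper simply invokes Lemma~\ref{lem:D_p*} after observing $\calH(n,K)=\calD_p^*(Kr_n)$, for $C_{n,2}(r,K)$ it cites the bound \eqref{eq:Cn2} verbatim, and for $\rho_n$ it writes the exact last-renewal-in-block identity $\proba(R_{n,\vvi}\cap\calI_{n,1}\ne\emptyset)=\sum_{k=1}^{d_n}w_n^{-p}\wb F_p(d_n-k)$ and appeals to \eqref{eq:wb F_p crit}. Your version is slightly more explicit---in particular, you articulate why the global-maximum decomposition of Lemma~\ref{lem:rho} becomes vacuous at $\beta_p=0$ and must be replaced by the block-local last renewal, a point the paper leaves implicit---but the computations are identical.
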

\begin{proof} Recall  in this case $\calH(n,K) = \calD_p^*(Kr_n)$ when $n$ is large enough and $r_n$ is now as in \eqref{eq:r_n critical}.
The first relation follows from Lemma \ref{lem:D_p*}. The second relation \eqref{eq:Cn2 critical} follows from   \eqref{eq:Cn2}.
For the third relation, we first
recall the the $p$-intersection of renewal processes, and $\wb F_p$ in \eqref{eq:wb F_p crit}. Thus, using a last-renewal decomposition, we have
\[
\proba\pp{R_{n,\vvi}\cap \calI_{n,1}\ne\emptyset} = \summ k1{d_n}\frac1{w_n^p}\wb F_p(d_n-k)\sim \frac{d_n}{w_n^p}\frac{(\mathsf C_F\Gamma(\beta)\Gamma(1-\beta))^p}{\log d_n}. 
\]
\end{proof}
The estimate of $\rho_n(r)$ in \eqref{eq:rho_n(r)}  is summarized below.
\begin{Lem}\label{lem:rho_n(r) critical}
Furthermore,
\[
\rho_n(r) \le \begin{cases}
\displaystyle \frac C{k_n^{\beta_r}\log d_n}+\frac C{\log ^2d_n}, & \mbox{ if } r = 1,\dots,p-1,\\
\\
\displaystyle \frac C{\log ^{3/2} d_n} + \frac {C \log k_n}{\log^2 d_n}, & \mbox{ if } r = p.
\end{cases}
\]
\end{Lem}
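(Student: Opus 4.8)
\textbf{Proof strategy for Lemma~\ref{lem:rho_n(r) critical}.} The plan is to follow the template of the sub-critical analogue Lemma~\ref{lem:rho_n(r)}: split $\rho_n(r)$ into the contribution of \emph{close} block pairs ($|j-j'|\le 1$) and \emph{far} block pairs ($|j-j'|>1$), and for a representative pair bound $\rho_{n,\vvi,\vvi',j,j'}\le\proba(R_{n,\vvi}\cap\calI_{n,j}\ne\emptyset,\,R_{n,\vvi'}\cap\calI_{n,j'}\ne\emptyset)$ with $r=|\vvi\cap\vvi'|$, as in \eqref{eq:rho_n i j}. The new feature at criticality is that the crude two-point/first-moment bounds that sufficed when $\beta_p<0$ now lose logarithmic factors, so one must use sharp renewal estimates for the \emph{critical} $p$-fold intersected renewal process. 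Concretely, by \eqref{eq:wb F_p crit} and \eqref{eq:u_p} one has $\wb F_p(x)\sim C/\log x$ and $u_p(m)=u(m)^p\sim C/m$, and the crucial point is the increment bound $\wb F_p(x)-\wb F_p(x+d_n)\le C d_n/(x\log^2 x)$ (valid in the range $\log d_n\asymp\log n$ we work in), which is a full factor $\log$ sharper than $\wb F_p(x)$ itself. Equivalently: conditionally on visiting a window of length $d_n$, the critical renewal set has of order $\log d_n$ points there — compare $\esp|R_{n,\vvi}\cap\calI_{n,1}|=d_n/w_n^p$ with $\proba(R_{n,\vvi}\cap\calI_{n,1}\ne\emptyset)\sim C d_n/(w_n^p\log d_n)$ from \eqref{eq:rho_n crit} — so a union bound over the visited sites overcounts by a factor $\log d_n$.

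For $r=p$ (so $\vvi=\vvi'$): assuming $j<j'$, decompose the event $\{R_{n,\vvi}\cap\calI_{n,j}\ne\emptyset,\,R_{n,\vvi}\cap\calI_{n,j'}\ne\emptyset\}$ on the last renewal of $R_{n,\vvi}$ not exceeding $jd_n$ and apply the renewal property. For far pairs this produces $\proba(R_{n,\vvi}\cap\calI_{n,j}\ne\emptyset)$ times a return probability; the first factor carries the critical $\log d_n$-gain from \eqref{eq:rho_n crit}, and the return probability is bounded, via the increment estimate above together with $u_p(m)\sim C/m$, by $C/(|j-j'|\log d_n)$. Summing over $|j-j'|>1$ with $\summ{\ell}{2}{k_n}\ell^{-1}\le C\log k_n$, $d_nk_n\sim n$ and $w_n^p\sim(\mathsf C_F/(1-\beta))^p n$ gives the term $C\log k_n/\log^2 d_n$ (this is \eqref{eq:critical off diagonal r=p}). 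Close pairs ($|j-j'|=1$) are handled by the same last-renewal decomposition inside a window of length $2d_n$: the resulting sum $\summ{b}{0}{d_n-1}[\wb F_p(b)-\wb F_p(b+d_n)]$ enjoys a cancellation leaving order $d_n/\log^{3/2}d_n$ (in fact $d_n/\log^2 d_n$, but $\log^{3/2}d_n$ suffices and is cheaper to prove), and summing over the $O(k_n)$ such pairs gives $C/\log^{3/2}d_n$.

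For $r<p$ (so $\beta_r=(p-r)/p\in(0,1)$): condition on the shared $r$-fold sub-intersection $A=\bigcap_{m=1}^{r}R_{n,m}$, using that given $A$ the two events $\{R_{n,\vvi}\cap\calI_{n,j}\ne\emptyset\}$ and $\{R_{n,\vvi'}\cap\calI_{n,j'}\ne\emptyset\}$ depend on disjoint families of the remaining independent renewal sets and are hence conditionally independent. For each block, bound the conditional probability that the corresponding $p$-fold set hits it, again extracting a $\log d_n$-gain because $R_{n,\vvi}=A\cap(\text{private part})$ is critical; the shared part $A$ contributes a polynomial return factor $|j-j'|^{\beta_r-1}$ from $u(\,\cdot\,)^r\sim C(\,\cdot\,)^{-r/p}=C(\,\cdot\,)^{\beta_r-1}$. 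Carrying out the sums $\summ{\ell}{2}{k_n}\ell^{\beta_r-1}\le Ck_n^{\beta_r}$ over far pairs and over the $O(k_n)$ close pairs, using again $d_nk_n\sim n$ and $w_n^p\sim(\mathsf C_F/(1-\beta))^pn$, yields the two stated terms $C/(k_n^{\beta_r}\log d_n)$ and $C/\log^2 d_n$.

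The main obstacle is extracting these extra $\log d_n$ factors \emph{uniformly} over the block indices $(j,j')$: the estimates must be sharp enough that the logarithmic gains from the critical renewal structure exactly compensate the growth $C_{n,1}(K)\asymp\log n$ and the size of $C_{n,2}(r,K)$ in \eqref{eq:Cn2 critical} when one forms $b_{n,2}(K)=\summ{r}{1}{p}C_{n,2}(r,K)\rho_n(r)$. This is precisely what forces the two-sided restrictions on the growth of $d_n$ recorded in \eqref{eq:k_n} — the lower restriction $k_n^{\beta_{p-1}}\ge C\log n$ coming from the far-pair bound for $r=p-1$, and $\log k_n=o(\log d_n)$ from the far-pair bound \eqref{eq:critical off diagonal r=p} for $r=p$ — in contrast to the sub-critical regime, where only $d_n\to\infty$ and $n/d_n\to\infty$ were needed.
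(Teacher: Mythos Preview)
Your overall architecture is right --- split $\rho_n(r)$ according to $|j-j'|$ and exploit that at criticality the $p$-fold intersected renewal carries an extra $1/\log d_n$ each time one controls a block-visit probability through $\wb F_p$ rather than through a crude union bound. But two of the specific devices you invoke do not go through as written, and the paper's proof takes a different route precisely to avoid them.

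First, for $r=p$ you rely on the pointwise increment bound $\wb F_p(x)-\wb F_p(x+d_n)\le C d_n/(x\log^2 x)$, and for the close pair $|j-j'|=1$ on the cancellation in $\sum_{b=0}^{d_n-1}[\wb F_p(b)-\wb F_p(b+d_n)]$. Neither is available from the first-order asymptotic $\wb F_p(x)\sim C/\log x$ alone: the increment bound would require $f_p(m)\le C/(m\log^2 m)$, which is a Doney-type statement for the \emph{intersected} renewal $F_p$ that is nowhere established (only \eqref{eq:Doney} for $F$ is assumed), and the telescoping sum gives only $o(d_n/\log d_n)$ without a rate. The paper's substitute is Lemma~\ref{lem:critical diagonal}: introduce an auxiliary scale $s_n$, split according to whether the first renewal in block $1$ lies in $\{1,\dots,d_n-s_n\}$ or in the last $s_n$ sites, bound each piece using only $\wb F_p$ and $u_p$ at first order, and then optimize over $s_n\sim(d_n^2/\log d_n)^{1/2}$ to get $C/\log^{3/2}d_n$. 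For the far pairs $|j-j'|\ge2$ the paper likewise avoids the increment bound entirely: it uses a first-renewal/last-renewal decomposition (Lemma~\ref{lem:diagonal main}) with the transition $u(k'-k)^p\le C(k'-k)^{-1}$ and the tail $\wb F_p$, never differencing $\wb F_p$.

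Second, for $r<p$ your plan is to condition on the shared $r$-fold set $A$ and use conditional independence. That factorization is correct, but the next step --- ``extracting a $\log d_n$-gain per block because $R_{n,\vvi}$ is critical'' --- is not delivered by the obvious bound $\proba(R_{n,\vvi}\cap\calI_{n,j}\ne\emptyset\mid A)\le |A\cap\calI_{n,j}|/w_n^{p-r}$: taking expectations over $A$ then reproduces exactly the sub-critical union bound of Lemma~\ref{lem:rho_n(r)}, with no $\log d_n$ improvement. The paper instead works unconditionally via first-renewal in one block and last-renewal in the other (Lemmas~\ref{lem:diagonal main} and~\ref{lem:critical same block}): the factor $\proba(R_{n,\vvi}\cap\calI_{n,1}\ne\emptyset)$ supplies one $1/\log d_n$ through \eqref{eq:rho_n crit}, and the factor $\wb F_p((j{+}1)d_n-k')$ from the last-renewal of the full $p$-fold process $R_{n,\vvi'}$ supplies the second. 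The shared-part transition $u(k'-k)^r\le C(k'-k)^{\beta_r-1}$ then produces the $|j-j'|^{\beta_r-1}$ you anticipated, and the same-block case (Lemma~\ref{lem:critical same block}) is what actually yields the term $C/(k_n^{\beta_r}\log d_n)$, while the off-diagonal pairs $|j-j'|\ge1$ yield $C/\log^2 d_n$ --- note the paper's partition is $j=j'$ versus $|j-j'|\ge1$, not your $|j-j'|\le1$ versus $|j-j'|>1$.
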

The proof is divided into the following three lemmas.
We first estimate the summation  ``off the diagonal'' in \eqref{eq:rho_n(r)}.
\begin{Lem}\label{lem:diagonal main}
We have for $r:=|\vvi\cap\vvi'| =1,\dots,p-1$,
\[
\sum_{\substack {j,j'=1,\dots,k_n\\|j-j'|\ge 1}} \proba \pp{R_{n,\vvi}\cap \calI_{n,j}\ne\emptyset, R_{n,\vvi'}\cap \calI_{n,j'}\ne\emptyset}\le 
 \frac C{\log^2 d_n}, 
\]
and   for $r=p$ (hence $\vv i= \vv i'$) 
\equh\label{eq:critical off diagonal r=p}
\sum_{\substack {j,j'=1,\dots,k_n\\|j-j'|\ge 2}} \proba \pp{R_{n,\vvi}\cap \calI_{n,j}\ne\emptyset, R_{n,\vvi}\cap \calI_{n,j'}\ne\emptyset}\le 
 \frac{C\log k_n}{\log^2d_n}.
\eque
\end{Lem}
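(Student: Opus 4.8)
\textbf{Proof plan for Lemma \ref{lem:diagonal main}.}

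The plan is to reduce both bounds to the renewal-mass estimate $u_p(n) = u(n)^p \sim C n^{\beta_p - 1} = C n^{-1}$ (recall $\beta_p = 0$) and the tail estimate $\wb F_p(n) \sim (\mathsf C_F\Gamma(\beta)\Gamma(1-\beta))^p/\log n$, and then to sum over block indices. First I would write, as in the proof of Lemma~\ref{lem:rho_n(r)}, the union bound
\[
\proba\pp{R_{n,\vvi}\cap\calI_{n,j}\ne\emptyset, R_{n,\vvi'}\cap\calI_{n,j'}\ne\emptyset}
\le \sum_{k\in\calI_{n,j}}\sum_{k'\in\calI_{n,j'}}\proba(k\in R_{n,\vvi},k'\in R_{n,\vvi'}),
\]
and use that, with $r=|\vvi\cap\vvi'|$, the renewal/shift-invariance properties give $\proba(k\in R_{n,\vvi},k'\in R_{n,\vvi'}) = w_n^{-(2p-2r)}\cdot w_n^{-r}u(|k-k'|)^r = w_n^{-(2p-r)}u(|k-k'|)^r$ when $r<p$; when $r=p$ the joint event forces both $k,k'\in R_{n,\vvi}$, and one must instead bound $\proba(k,k'\in R_{n,\vvi}) \le w_n^{-p}\wb F_p(|k-k'|)\cdot$(something), more carefully using a last-renewal-before-$k$ decomposition followed by the renewal at $k$, so that it is $\le w_n^{-p}u_p(\text{gap})$-type — but crucially, for $r=p$ the relevant quantity is not $u_p$ but involves $\wb F_p$, since the first block contributes a factor $d_n/(w_n^p\log d_n)$ and the ``return'' to block $j'$ costs a factor that is summable against $\log^{-2}$.

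For the case $r=1,\dots,p-1$: bounding $u(\ell)\le C\ell^{\beta-1}$ and using $|k-k'|\ge (|j-j'|-1)d_n$ for $|j-j'|\ge 2$, together with the within-block/adjacent-block estimate handled separately exactly as in Lemma~\ref{lem:rho_n(r)}, the sum over $j,j'$ with $|j-j'|\ge 1$ telescopes into $C d_n^{1+\beta_r}w_n^{-(2p-r)}\sum_{m\ge 1}(k_n-m)m^{\beta_r-1}$. Since $\beta_r = r\beta - r + 1 > 0$ for $r\le p-1$ (because $\beta_p=0$ means $\beta_{p-1} = 1/p > 0$ and $\beta_r$ is decreasing in $r$), this is $\le C d_n^{1+\beta_r} w_n^{-(2p-r)} k_n^{1+\beta_r} \sim C n^{1+\beta_r} w_n^{-(2p-r)}$. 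Plugging $w_n \sim C n^{1-\beta}$ and $n^{1+\beta_r}n^{-(1-\beta)(2p-r)} = n^{1+r\beta-r+1 - (1-\beta)(2p-r)} = n^{0}$ after simplification (this is the point where $\beta_p=0$ is used to kill the polynomial factor), so the bound is $O(1)$ in powers of $n$; the remaining polylog must then be shown to be $O(\log^{-2}d_n)$, which follows because $w_n^{-(2p-r)}$ carries no log whereas $k_n \sim \log^p n$ and one checks the exponents of $\log$ cancel against $\log^{2}n$ in the denominator coming from $w_n$ versus the numerator — more directly, one simply inserts the precise asymptotics $w_n\sim(\mathsf C_F/(1-\beta))n^{1-\beta}$ and $d_n\sim n/\log^p n$, $k_n\sim\log^p n$, to read off the power of $\log d_n\sim\log n$.

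For $r=p$: here $\vvi=\vvi'$, the sum excludes $|j-j'|\le 1$, and one uses $\proba(R_{n,\vvi}\cap\calI_{n,j}\ne\emptyset, R_{n,\vvi}\cap\calI_{n,j'}\ne\emptyset) \le \proba(R_{n,\vvi}\cap\calI_{n,j}\ne\emptyset)\cdot\sup_{k\in\calI_{n,j}}\proba(R_{n,\vvi}\cap\calI_{n,j'}\ne\emptyset\mid k\in R_{n,\vvi})$; the first factor is $\sim d_n(\mathsf C_F\Gamma(\beta)\Gamma(1-\beta))^p/(w_n^p\log d_n)$ by Lemma~\ref{lem:Cn critical}, and the conditional factor, after the renewal at $k$, is $\le \sum_{\ell} u_p(\ell)$ over a gap $\gtrsim (|j-j'|-1)d_n$, i.e.\ $\le C\wb F_p((|j-j'|-1)d_n)\cdot(\text{no }w_n) \le C/\log((|j-j'|-1)d_n)$. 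Summing $\sum_{m\ge 2} 1/\log(md_n)$ over $m\le k_n$ gives $\le C k_n/\log d_n$ crudely, but that is too weak; a finer split — bounding the conditional probability also by $\le C d_n/(w_n^p)\cdot\sum u_p$ which is $O(1)$ and noting there are at most $k_n$ terms, then optimizing — yields the stated $C\log k_n/\log^2 d_n$ after using $k_n\rho_n \asymp \log n/\log d_n = O(1)$ from Remark~\ref{rem:difference}(b). \textbf{The main obstacle} is precisely this $r=p$ estimate: one has to extract the extra $\log d_n$ (beyond the trivial $k_n\rho_n$) from the fact that returning to a far block costs a factor decaying like $1/\log(\text{distance})$, and to control $\sum_{m=2}^{k_n}1/\log(md_n)$ sharply enough — using $\log(md_n)\ge\log d_n$ for all $m$, and $\log(md_n) \ge \tfrac12\log m\cdot$(correction) for $m$ large — to land on $\log k_n/\log^2 d_n$ rather than the naive $k_n/\log d_n$; this is the computation that forces the constraint $\log k_n = o(\log d_n)$ recorded in \eqref{eq:k_n}.
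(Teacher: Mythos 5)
Your proposal has genuine gaps in both halves, and they share a common cause: the crude union bound over pairs of renewal points, which is exactly the subcritical technique of Lemma \ref{lem:rho_n(r)}, loses the two logarithmic factors that are the entire content of the lemma at criticality. For $r=1,\dots,p-1$ and non-adjacent blocks, carry out your own computation to the end: the per-pair bound is $w_n^{-(2p-r)}\sum_{k,k'}u(|k-k'|)^r\le Cd_n^{1+\beta_r}w_n^{-(2p-r)}(|j-j'|-1)^{\beta_r-1}$, and summing over blocks gives $C(d_nk_n)^{1+\beta_r}w_n^{-(2p-r)}\sim Cn^{1+\beta_r}/n^{(1-\beta)(2p-r)}=C$, because $(1-\beta)(2p-r)=2(1-\beta_p)-(1-\beta_r)=1+\beta_r$ when $\beta_p=0$ and neither $w_n$ nor $\wb F$ carries a slowly varying factor. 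There is no ``remaining polylog'' to read off: the logs in $d_n^{1+\beta_r}k_n^{1+\beta_r}$ cancel since $d_nk_n\sim n$, so your bound is $O(1)$, off by a factor $\log^2 n$ from the claimed $C/\log^2 d_n$; likewise the adjacent-block term treated ``as in Lemma \ref{lem:rho_n(r)}'' gives $Ck_nd_n^{1+\beta_r}/w_n^{2p-r}\sim Ck_n^{-\beta_r}=C\log^{-p\beta_r}n$, which for $r=p-1$ (where $\beta_{p-1}=1/p$) is only $C/\log n$. The missing idea is that at $\beta_p=0$ one must estimate probabilities of \emph{hitting blocks}, not expected numbers of point pairs: conditionally on $R_{n,\vvi}$ meeting a block of length $d_n$, the $p$-fold intersection contains of order $\log d_n$ points, so the expectation of the product of counts overshoots the joint hitting probability by $\log^2 d_n$ — precisely the factor you need. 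The paper gains these two logs by a first-renewal decomposition in one block, producing the factor $\proba\pp{R_{n,\vvi}\cap\calI_{n,1}\ne\emptyset}\sim d_n(\mathsf C_F\Gamma(\beta)\Gamma(1-\beta))^p/(w_n^p\log d_n)$ of \eqref{eq:rho_n crit}, and a last-renewal decomposition in the other block, which inserts the tail weight $\wb F_p(\,\cdot\,)$ and hence, via $\sum_{k<d_n}\wb F_p(k)\sim Cd_n/\log d_n$ and the convolution bound \eqref{eq:convol bound} (where $\beta_r>0$ is used), a second $1/\log d_n$ that is available even for adjacent blocks $|j-j'|=1$, which your lemma must also cover when $r<p$.

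For $r=p$ you do use the hitting probability for the first block and a $1/\log$ for the second, but your conditional bound $C/\log((|j-j'|-1)d_n)\asymp 1/\log d_n$ carries no decay in the block distance; with it the sum over the at most $k_n^2$ pairs gives only $Ck_n/\log^2 d_n=C\log^p n/\log^2 d_n\to\infty$, not the stated $C\log k_n/\log^2 d_n$, and the ``finer split / optimizing'' step is asserted rather than proved. (Also, bounding the conditional return probability by $\wb F_p$ of the gap is not justified: hitting a distant window is not an event about a single inter-renewal time.) The correct estimate comes from the last-renewal decomposition in the far block: given a renewal at $k$ in block $1$, the probability of meeting block $j+1$ is $\sum_{k'\in\calI_{n,j+1}}u(k'-k)^p\,\wb F_p((j+1)d_n-k')\le C((j-1)d_n)^{-1}\sum_{\ell<d_n}\wb F_p(\ell)\le C/((j-1)\log d_n)$, which carries both the $1/\log d_n$ from $\wb F_p$ and the $1/(j-1)$ decay from $u_p(\ell)\le C\ell^{\beta_p-1}=C/\ell$; it is exactly the resulting harmonic sum $\sum_{j\le k_n}(k_n-j)j^{-1}\le Ck_n\log k_n$ that produces the $\log k_n$ in \eqref{eq:critical off diagonal r=p}. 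Without that gap decay the target bound is out of reach, so this half of the argument also needs the renewal-decomposition mechanism rather than the bounds you propose.
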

\begin{proof}
Introduce 
\begin{equation}\label{eq:f l hat}
\what f_{n,\vvi,j} := \min(R_{n,\vvi}\cap\calI_{n,j}) \qmand \what \ell_{n,\vvi,j}:=\max(R_{n,\vvi}\cap\calI_{n,j}).
\end{equation}
Recall again that one can find a constant $C>0$ such that $u(n)\le Cn^{\beta-1}$ for all $n\in\N$. Write, by the first-renewal decomposition, 
\begin{align}
\proba & \pp{R_{n,\vvi}\cap \calI_{n,1}\ne\emptyset, R_{n,\vvi'}\cap \calI_{n,j+1}\ne\emptyset}=\sum_{k\in \calI_{n,1}}\sum_{k'\in\calI_{n,j+1}}\proba\pp{\what f_{n,\vvi,1} = k, \what  \ell_{n,\vvi',j}=k'}\nonumber\\
& = \sum_{k\in \calI_{n,1}}\proba\pp{\what f_{n,\vvi,1} = k}\sum_{k'\in\calI_{n,j+1}}u(k'-k)^{r}\frac1{w_n^{p-r}}\wb F_p((j+1)d_n-k')\nonumber\\
& \le C\sum_{k\in \calI_{n,1}}\proba\pp{\what f_{n,\vvi,1} = k}\sum_{k'\in\calI_{n,j+1}}(k'-d_n)^{(\beta-1)r}\frac1{w_n^{p-r}}\wb F_p((j+1)d_n-k')\nonumber\\
& = C \proba\pp{R_{n,\vvi}\cap \calI_{n,1}\ne\emptyset}\frac1{w_n^{p-r}}\sum_{k= (j-1)d_n+1}^{jd_n} k^{\beta_r-1} \wb F_p(jd_n-k).\label{eq:r=p}
\end{align}
If  $j\ge 2$, $r=1,\ldots,p$,
the summation above can be bounded as (recall $\beta_r<1$ always)  
\[
\sum_{k= (j-1)d_n+1}^{jd_n} k^{\beta_r-1} \wb F_p(jd_n-k) \le   ((j-1)d_n+1)^{\beta_r-1}   \sum_{k=0}^{ d_n-1} \wb F_p( k)\le C  \frac{ j^{\beta_r-1}d_n^{\beta_r}}{\log d_n}.
\]
If $j=1$, $r<p$, and $d_n$ is sufficiently large,   exploiting monotonicity we have
\begin{align}
\summ k1{d_n} k^{\beta_r-1} \wb F_p( d_n-k)
\notag
& \le \wb F_p(  d_n/2)\summ k1{\floor{d_n/2}} k^{\beta_r-1}  +  (d_n/2)^{\beta_r-1} \sum_{k=\floor{d_n/2}+1}^{  d_n} \wb F_p( d_n-k)\\
&\le C \frac{d_n^{\beta_r}}{\log d_n}\label{eq:convol bound}.
\end{align}
(Notice that we used $\beta_r<0$ above and hence $r=p$  is excluded.)
Therefore combining the above to \eqref{eq:r=p} with also \eqref{eq:rho_n crit}, we arrive at
\begin{align*}
\proba  \pp{R_{n,\vvi}\cap \calI_{n,1}\ne\emptyset, R_{n,\vvi'}\cap \calI_{n,j+1}\ne\emptyset}  & \le C\frac{\rho_n}{w_n^{p-r}}\frac{d_n^{\beta_r}j^{\beta_r-1}}{\log d_n} \\
& \le \frac{Cd_n^{\beta_r+1}}{n^{\beta_r+1}\log ^2d_n}j^{\beta_r-1}
\end{align*}
 for all (i) $r=1,\dots,p-1,j,n\in\N$ or (ii) $r=p, j\ge 2,n\in\N$,
for some constant $C>0$. Therefore, for $|\vvi\cap\vvi'| = r\le p-1$, 
\begin{align*}
\sum_{\substack {j,j'=1,\dots,k_n\\|j-j'|\ge 1}} \proba \pp{R_{n,\vvi}\cap \calI_{n,j}\ne\emptyset, R_{n,\vvi'}\cap \calI_{n,j'}\ne\emptyset} &\le C \frac{d_n^{\beta_r+1}}{n^{\beta_r+1}\log ^2d_n}\summ j1{k_n}(k_n-j)j^{\beta_r-1}\\
& \le C\frac{d_n^{\beta_r+1}k_n^{\beta_r+1}}{n^{\beta_r+1}\log^2 d_n} \le \frac C{\log^2d_n},
\end{align*}
and
\begin{align*}
\sum_{\substack {j,j'=1,\dots,k_n\\|j-j'|\ge 2}} \proba \pp{R_{n,\vvi}\cap \calI_{n,j}\ne\emptyset, R_{n,\vvi}\cap \calI_{n,j'}\ne\emptyset} &\le C \frac{d_n}{n\log ^2d_n}\summ j1{k_n}(k_n-j)j^{-1}\\
& \le C\frac{d_nk_n\log k_n}{n\log^2 d_n} \le \frac {C\log k_n}{\log^2d_n},
\end{align*}
as desired.
\end{proof}
With $r=p$ for adjoint blocks ($|j-j'|=1$), a little more care is needed, as the previous method leads to an inferior control with $r=p$. Below is an improved estimate in this case. 
\begin{Lem}\label{lem:critical diagonal}
We have
\[
\sum_{\substack{j,j'=1,\dots,k_n\\|j-j'| = 1}}\proba\pp{R_{n,\vvi}\cap\calI_{n,j}\ne\emptyset, R_{n,\vvi}\cap \calI_{n,j'}\ne\emptyset} \le
 \frac C{\log^{3/2}d_n}.
\]
\end{Lem}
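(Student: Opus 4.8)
The plan is to reduce each adjacent‑block probability to a single functional of the $p$‑fold intersection renewal tail $\wb F_p$, and then to estimate that functional; the reduction is routine, and the estimate is where the work (and the loss of exponent) lies.

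Fix $\vvi\in\calD_p^*(Kr_n)$; by symmetry it suffices to treat the pair $\calI_{n,j},\calI_{n,j+1}$. On the event $\{R_{n,\vvi}\cap\calI_{n,j}\ne\emptyset\}$, decompose according to the position $m=\what\ell_{n,\vvi,j}=\max(R_{n,\vvi}\cap\calI_{n,j})$ of the last point of $R_{n,\vvi}$ in the earlier block. By property (i), $\proba(m\in R_{n,\vvi})=w_n^{-p}$; by the renewal property (ii) applied to each of the independent components $R_{n,i_h}$, conditionally on $m\in R_{n,\vvi}$ the shifted set $(R_{n,\vvi}\cap\{m+1,m+2,\dots\})-m$ has the law of the intersection renewal process $\vv\eta$ (only renewals at distance $\le 2d_n\ll n$ enter, so the finite horizon is irrelevant). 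Since, given $m\in R_{n,\vvi}$, one has simultaneously $R_{n,\vvi}\cap\{m+1,\dots,jd_n\}=\emptyset$ and $R_{n,\vvi}\cap\calI_{n,j+1}\ne\emptyset$ exactly when $jd_n-m<\eta_1\le(j+1)d_n-m$, writing $s=jd_n-m$ gives the exact identity
\[
\proba\pp{R_{n,\vvi}\cap\calI_{n,j}\ne\emptyset,\ R_{n,\vvi}\cap\calI_{n,j+1}\ne\emptyset}
=\frac1{w_n^p}\summ s0{d_n-1}\pp{\wb F_p(s)-\wb F_p(s+d_n)}
=\frac1{w_n^p}\pp{2W_p(d_n)-W_p(2d_n)+O(1)},
\]
where $W_p(N):=\summ s1N\wb F_p(s)$. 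The right‑hand side is independent of $j$, so summing over the at most $2k_n$ ordered adjacent pairs and using $k_nd_n\le n$ together with $w_n^p\ge cn$ (from $w_n\sim\frac{\mathsf C_F}{1-\beta}n^{1/p}$, as $\beta=(p-1)/p$), the lemma reduces to $2W_p(d_n)-W_p(2d_n)=O\pp{d_n/\log^{3/2}d_n}$.

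This last estimate is the main obstacle. By \eqref{eq:wb F_p crit}, $\wb F_p(N)\sim D/\log N$ and hence $W_p(N)\sim DN/\log N$ with $D:=(\mathsf C_F\Gamma(\beta)\Gamma(1-\beta))^p$, so the leading terms cancel in $2W_p(d_n)-W_p(2d_n)=\sum_{0\le s<d_n}(\wb F_p(s)-\wb F_p(s+d_n))$; the bare first‑order asymptotics only yield the inferior bound $O(d_n/\log d_n)$ (via $\wb F_p(s)-\wb F_p(s+d_n)\le\wb F_p(s)$), which is exactly why more care is needed here than in Lemma \ref{lem:diagonal main}. I would split the sum at a cutoff $s_0\asymp d_n/\log d_n$. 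For $s\le s_0$ keep the crude bound $\wb F_p(s)-\wb F_p(s+d_n)\le\wb F_p(s)$ and use $\wb F_p(s)\le C/\log s$, which follows from the renewal identity $\summ k0n u_p(k)\wb F_p(n-k)=1$, monotonicity of $\wb F_p$, and $U_p(n):=\summ k0n u(k)^p\sim(\log n)/D$; this contributes $\le 1+W_p(s_0)=O(d_n/\log^2 d_n)$. For $s>s_0$ use the discrete mean‑value bound $\wb F_p(s)-\wb F_p(s+d_n)=\sum_{s<u\le s+d_n}f_p(u)\le d_n\max_{s<u\le s+d_n}f_p(u)$, where $f_p(u):=\wb F_p(u-1)-\wb F_p(u)$, together with the Doney‑type estimate $f_p(u)\le C/(u\log^2 u)$, and sum the resulting series, which is $O\pp{d_n(\log\log d_n)/\log^2 d_n}=o\pp{d_n/\log^{3/2}d_n}$. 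I anticipate the genuinely delicate point to be this last ingredient, namely the bound $f_p(u)\le C/(u\log^2 u)$ — equivalently, the required second‑order control on $\wb F_p$, the first‑order behaviour being insufficient because of the cancellation above. I would derive it by transferring the regularity hypothesis \eqref{eq:Doney} from the underlying renewal law to $\vv\eta$ through $u_p=u^p$ and the renewal equation, for instance by singularity analysis of $\sum_k\wb F_p(k)z^k=\big((1-z)\sum_k u_p(k)z^k\big)^{-1}$ near $z=1$. The exponent $3/2$ in the statement is a convenient weakening (the true order is $d_n/\log^2 d_n$) that is amply sufficient for the two‑moment method.
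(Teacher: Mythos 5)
Your reduction of each adjacent-block probability to the exact identity $w_n^{-p}\sum_{s=0}^{d_n-1}\bigl(\wb F_p(s)-\wb F_p(s+d_n)\bigr)$ is correct (the renewal property does make the conditional future of $R_{n,\vvi}$ after its last point in block $j$ a copy of $\vv\eta$), and your treatment of the range $s\le s_0$ is fine. The gap is in the range $s>s_0$: everything there rests on the pointwise bound $f_p(u)=\proba(\eta_1=u)\le C/(u\log^2 u)$, and this is not established — nor does your proposed derivation have a realistic chance of establishing it from the paper's hypotheses. Assumption \eqref{eq:Doney} is a local regularity condition on $f$, the increment of the \emph{base} renewal law; $f_p$ is the first-passage distribution of the $p$-fold \emph{intersection}, and no mechanism is given for transferring local regularity through that construction. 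Singularity analysis of $\sum_k\wb F_p(k)z^k=\bigl((1-z)\sum_k u_p(k)z^k\bigr)^{-1}$ near $z=1$ can only deliver Abelian statements about partial sums of $\wb F_p$ (first-order, at that, since only $u_p(n)\sim n^{-1}/D$ is known, with no rate); extracting a pointwise coefficient bound would require a Tauberian hypothesis on $f_p$ (eventual monotonicity, or an a priori $O(1/(n\log^2 n))$ bound) — i.e., precisely what you are trying to prove. And the difficulty is not cosmetic: as you yourself observe, the leading-order asymptotics $\wb F_p(N)\sim D/\log N$ cancel in $2W_p(d_n)-W_p(2d_n)$, so \emph{any} purely analytic route to $o(d_n/\log d_n)$ needs genuine second-order information about $\wb F_p$ that is simply not available from \eqref{eq:F}--\eqref{eq:Doney}.

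The paper sidesteps this entirely with a different, deliberately lossy decomposition: it conditions on the \emph{first} renewal $\what f_{n,\vvi,j}$ of $R_{n,\vvi}$ in block $j$ and introduces a buffer of width $s_n$ at the block boundary. If $\what f_{n,\vvi,j}\le jd_n-s_n$, then any point of block $j+1$ is at distance at least $s_n$ from it, and one uses only the crude bounds $u_p(k'-k)\le C(k'-k)^{\beta_p-1}\le Cs_n^{-1}$ and $\sum_{k\le d_n}\wb F_p(k)\le Cd_n/\log d_n$, giving $Cd_n^2/(ns_n\log^2 d_n)$; if $\what f_{n,\vvi,j}$ falls in the buffer, the probability is at most $w_n^{-p}\sum_{j<s_n}\wb F_p(j)\le Cs_n/(n\log s_n)$. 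Optimizing $s_n\asymp d_n/\sqrt{\log d_n}$ yields $Cd_n/(n\log^{3/2}d_n)$ per pair, hence the exponent $3/2$ — which is thus not a "convenient weakening" of a provable $\log^2$ bound but exactly what this first-order argument buys. If you want to keep your exact identity, the cleanest fix is to note that the paper's upper bound, applied to your identity, is itself a proof that $2W_p(d_n)-W_p(2d_n)=O(d_n/\log^{3/2}d_n)$; proving anything sharper would require strengthening the standing assumptions.
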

\begin{proof}
 Introduce another sequence of integers $\{s_n\}_{n\in\N}$ such that $s_n\to\infty$ and $s_n/d_n\to 0$. 
For comparison purpose we write the proof for general $\vvi,\vvi'$ (so not necessarily $r=p$), and we shall see that at the end the method in this proof is superior than in the previous lemma {\em only} when $r=p$. 
Then recalling \eqref{eq:f l hat}, we proceed as
\begin{multline*}
\proba\pp{R_{n,\vvi}\cap \calI_{n,1}\ne\emptyset, R_{n,\vvi'}\cap \calI_{n,2}\ne\emptyset}\\
 \le  \sum_{k=1}^{d_n-s_n}\proba\pp{\what f_{n,\vvi,1} = k}\sum_{k'=d_n+1}^{2d_n}\proba\pp{\what\ell_{n,\vvi',2} = k'\mmid \what f_{n,\vvi,1} = k} + \proba\pp{\what f_{n,\vvi,1}\in\{d_n-s_n+1,\dots,d_n\}}.
\end{multline*}
We focus on the first double summation, which becomes
\begin{align*}
\sum_{k=1}^{d_n-s_n} & \proba\pp{\what f_{n,\vvi,1} = k}\sum_{k'=d_n+1}^{2d_n}u(k'-k)^{r(\beta-1)}\frac1{w_n^{p-r}}\wb F_p(2d_n-k') \\
& \le
\frac{C d_n}{n\log d_n}\sum_{k'=d_n+1}^{2d_n}s_n^{\beta_r-1}\frac1{w_n^{p-r}}\wb F_p(2d_n-k')   \le \frac{Cd_n}{n\log d_n}\frac{s_n^{\beta_r-1}}{w_n^{p-r}}\sum_{k=0}^{d_n-1} \wb F_p(k)  \le  \frac{Cd_n^2s_n^{\beta_r-1}}{nw_n^{p-r}\log^2d_n}, 
\end{align*}
where for the  previous line we have applied \eqref{eq:rho_n crit} and \eqref{eq:wb F_p crit}.
At the same time, 
\begin{align*}
\proba\pp{\what f_{n,\vvi,1}\in\{d_n-s_n+1,\dots,d_n\}} & \le  \sum_{k=d_n-s_n+1}^{d_n} \proba\pp{\what\ell_{n,\vvi,1}=k} \\
&= \sum_{k=d_n-s_n+1}^{d_n}\frac1{w_n^p}\wb F_p(d_n-k)\le \frac{Cs_n}{n\log s_n}.
\end{align*} 
Assume that $s_n$ grows at least at a polynomial rate. Then,  the optimal rate is achieved if  $d_n^2/(w_n^{p-r}\log d_n)\sim s_n^{2-\beta_r}$ (balancing the two bounds above), or equivalently
\[
s_n \sim \pp{\frac{d_n^2}{n^{\beta_r}\log d_n}}^{1/(2-\beta_r)}, r=1,\dots,p.
\]
(This sequence is indeed with polynomial growth.)
Therefore, with $s_n$ as above
\begin{align*}
\sum_{\substack{j,j'=1,\dots,k_n\\|j-j'| = 1}}\proba\pp{R_{n,\vvi}\cap\calI_{n,j}\ne\emptyset, R_{n,\vvi}\cap \calI_{n,j'}\ne\emptyset} 
& 
 \le \frac{Ck_n s_n}{n\log d_n} \sim \frac{Ck_n d_n^{2/(2-\beta_r)}}{n^{2/(2-\beta_r)}\log^{(3-\beta_r)/(2-\beta_r)}d_n}\\
& \sim C\pp{k_n^{\frac{\beta_r}{2-\beta_r}}\log^{\frac{3-\beta_r}{2-\beta_r}}d_n}\inv. 
\end{align*}
The desired result follows with $r=p$. Notice that with $r<p$, the above is inferior from what is obtained in Lemma \ref{lem:critical diagonal}.
\end{proof}
With Lemmas \ref{lem:diagonal main} and \ref{lem:critical diagonal} above, we have control over the summation  ``off the diagonal'' for $\rho_n$ in \eqref{eq:rho_n}. We are left with controlling the summation   ``on the diagonal''. 
\begin{Lem}\label{lem:critical same block}
For $r = |\vvi\cap\vvi'|=1,\dots,p-1$, 
\[
\proba\pp{R_{n,\vvi}\cap\calI_{n,1}\ne\emptyset, R_{n,\vvi'}\cap \calI_{n,1}\ne\emptyset} \le \frac C{k_n^{\beta_r+1}\log d_n}.
\]
\end{Lem}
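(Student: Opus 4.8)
The plan is to run a simultaneous last-renewal decomposition of $R_{n,\vvi}$ and $R_{n,\vvi'}$ inside the block $\calI_{n,1}=\{1,\dots,d_n\}$. Write $A=\vvi\cap\vvi'$, $B=\vvi\setminus\vvi'$, $C=\vvi'\setminus\vvi$, so $|A|=r$ and $|B|=|C|=p-r$, and set $\what\ell:=\what\ell_{n,\vvi,1}=\max(R_{n,\vvi}\cap\calI_{n,1})$ and $\what\ell':=\what\ell_{n,\vvi',1}=\max(R_{n,\vvi'}\cap\calI_{n,1})$, each defined on the event that the corresponding set meets the block. Then
\[
\proba\pp{R_{n,\vvi}\cap\calI_{n,1}\ne\emptyset,\,R_{n,\vvi'}\cap\calI_{n,1}\ne\emptyset}=\sum_{\ell,\ell'=1}^{d_n}\proba\pp{\what\ell=\ell,\ \what\ell'=\ell'},
\]
and I split this sum into the regions $\ell<\ell'$, $\ell>\ell'$ and $\ell=\ell'$; relabelling $\vvi\leftrightarrow\vvi'$ (which preserves $r$) reduces matters to $\ell\le\ell'$.

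The core estimate is that, for $\ell<\ell'$,
\[
\proba\pp{\what\ell=\ell,\ \what\ell'=\ell'}\le\frac{u(\ell'-\ell)^r}{w_n^{2p-r}}\,\wb F_p(d_n-\ell'),
\]
with the bound $w_n^{-(2p-r)}\wb F_p(d_n-\ell)$ on the diagonal $\ell=\ell'$. To obtain it I will first discard the non-return event $\{R_{n,\vvi}\cap(\ell,d_n]=\emptyset\}$, since its dependence on the shared coordinates of $A$ over $(\ell,\ell']$ would obstruct any factorization, whereas keeping only the non-return event of $R_{n,\vvi'}$ already produces the one logarithmic factor needed. The remaining event $\{\ell\in R_{n,i}\ \forall i\in A\cup B\}\cap\{\ell'\in R_{n,i}\ \forall i\in A\cup C\}\cap\{R_{n,\vvi'}\cap(\ell',d_n]=\emptyset\}$ factorizes by independence across coordinates and properties (i)--(ii) of $R_n$: the coordinates of $B$ contribute $w_n^{-(p-r)}$, those of $C$ contribute $w_n^{-(p-r)}$, the coordinates of $A$ contribute $\prod_{i\in A}\proba(\ell,\ell'\in R_{n,i})=(u(\ell'-\ell)/w_n)^r$ (via $\proba(\ell\in R_n)=1/w_n$ and $\proba(\ell'\in R_n\mid\ell\in R_n)=u(\ell'-\ell)$), and, since given $\ell'\in R_{n,i}$ for all $i\in\vvi'$ the strict future of $R_{n,\vvi'}$ is a fresh $p$-fold intersection of renewal processes independent of the past, the last event contributes $\wb F_p(d_n-\ell')$. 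The diagonal case is the same with all $2p-r$ distinct coordinates pinned at the single point $\ell$.

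It remains to sum. For $r\in\{1,\dots,p-1\}$ one has $\beta_r=(p-r)/p\in(0,1)$, so $u(m)\le Cm^{\beta-1}$ from \eqref{eq:u(n)} yields $\sum_{m=1}^{M}u(m)^r\le C\sum_{m=1}^{M}m^{\beta_r-1}\le CM^{\beta_r}$, while \eqref{eq:wb F_p crit} together with $\sum_{2\le j\le m}1/\log j\le Cm/\log m$ gives $\sum_{j=0}^{d_n-1}\wb F_p(j)\le Cd_n/\log d_n$. Hence
\[
\sum_{1\le\ell<\ell'\le d_n}\frac{u(\ell'-\ell)^r}{w_n^{2p-r}}\wb F_p(d_n-\ell')\le\frac{Cd_n^{\beta_r}}{w_n^{2p-r}}\sum_{j=0}^{d_n-1}\wb F_p(j)\le\frac{Cd_n^{\beta_r+1}}{w_n^{2p-r}\log d_n},
\]
and the diagonal sum is at most $Cd_n/(w_n^{2p-r}\log d_n)$, of smaller order since $\beta_r>0$. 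Finally $w_n\sim\frac{\mathsf C_F}{1-\beta}n^{1/p}$ gives $w_n^{2p-r}\asymp n^{\beta_r+1}$, and since $k_n\sim n/d_n$ we get $d_n^{\beta_r+1}/w_n^{2p-r}\asymp k_n^{-(\beta_r+1)}$, which is the claimed bound. The only delicate point is the factorization of $\proba(\what\ell=\ell,\what\ell'=\ell')$: one must peel off the two private coordinate families $B,C$ by independence, treat the two shared renewal times $\ell<\ell'$ through the forward renewal mass function $u$, and invoke the renewal property at the later time $\ell'$ to strip off $\wb F_p(d_n-\ell')$ --- it is precisely the choice to retain the non-return event of the set whose last point is the larger of $\ell,\ell'$ that makes this legitimate and produces a single factor $1/\log d_n$.
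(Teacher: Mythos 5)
Your proof is correct and follows essentially the same route as the paper: a last-renewal decomposition inside the block, retaining the non-return event only for the set whose last renewal in the block comes later, factorizing via independence across coordinates and the renewal property to get the kernel $u(\ell'-\ell)^r w_n^{-(2p-r)}\wb F_p(d_n-\ell')$, and then summing with $\sum_m u(m)^r\le C d_n^{\beta_r}$ and $\sum_j \wb F_p(j)\le C d_n/\log d_n$. The only cosmetic difference is that the paper first splits on whether $R_{n,\vvi}\cap R_{n,\vvi'}\cap\calI_{n,1}$ is empty (your diagonal term $\ell=\ell'$ plays the same role) and takes a union bound over an arbitrary point of one set rather than its last point; both yield the identical bound $C d_n^{\beta_r+1}/(w_n^{2p-r}\log d_n)$.
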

\begin{proof}Recall that $\beta_r>\beta_p=0$.  It is clear that $\proba(R_{n,\vvi}\cap R_{n,\vvi'}\cap \calI_{n,1}\ne\emptyset)\le \sum_{k\in \calI_{n,1}} \proba(k\in R_{n,\vvi}\cap R_{n,\vvi'}) = d_n/w_n^{2p-r}$ which compared with the desired bound is of smaller order. So we focus on 
\[
\proba\pp{R_{n,\vvi}\cap \calI_{n,1}\ne\emptyset, R_{n,\vvi'}\cap \calI_{n,1}\ne\emptyset, R_{n,\vvi}\cap R_{n,\vvi'}\cap \calI_{n,1}=\emptyset},
\]which  can be bounded from above by
\begin{align*}
 & 2\sum_{1\le k<k'\le d_n} \proba(k\in R_{n,\vvi})\proba(\max R_{n,\vvi'} =k'\mid k\in R_{n,\vvi}) \\
 = & \frac{2 }{w_n^{p}}\sum_{1\le  k < d_n} (d_n-k)  \proba\pp{1\in R_{n,\vvi},\max R_{n,\vvi'} = 1+k}\\
\le & \frac C{w_n^{2p-r}}\sum_{k=1}^{d_n}(d_n-k)k^{\beta_r-1}\wb F_p(d_n-k) 
  \le C \frac{d_n^{\beta_r+1}}{w_n^{2p-r}\log d_n} \sim C \frac{1}{k_n^{\beta_r+1}\log d_n},
\end{align*}
where for the last inequality above we have applied the bound \eqref{eq:convol bound}.
\end{proof}
Now we have obtained Lemma \ref{lem:rho_n(r) critical} by combining Lemmas \ref{lem:diagonal main} to \ref{lem:critical same block}.

\begin{proof}[Proof of Lemma \ref{lem:2 crit}]
The proof is  similar to the proof of Lemma \ref{lem:2} by applying the two-moments method, and in particular, by verifying \eqref{eq:goal two moments}  based on the estimates in Lemmas \ref{lem:Cn critical} and  \ref{lem:rho_n(r) critical}   with the rate of $d_n$ specified in \eqref{eq:d_n critical}.   We omit the details, but only point out that the normalization constant $\mathfrak C_{F,p}$ is determined as (see the computation of $\limn\esp\wb\eta_{n,K}(E_s)$ in the proof of Lemma \ref{lem:2})
\[
\limn C_{n,1}(K)k_n\rho_n  =\frac12 K\frac{(\mathsf C_F\Gamma(\beta)\Gamma(1-\beta))^p}{p!(p-1)!} = K\mathfrak C_{F,p}. 
\]
\end{proof}
\subsection{Proof of Proposition \ref{prop:2}}

\begin{Lem}\label{lem:1 crit}
With  the setup for $\beta_p = 0$, the conclusion of Lemma \ref{lem:1} continues to hold.
\end{Lem}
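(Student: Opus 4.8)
The plan is to follow the proof of Lemma~\ref{lem:1} almost verbatim, replacing the sub-critical estimates by their critical analogues (Lemmas~\ref{lem:Cn critical} and~\ref{lem:rho_n(r) critical}), with one genuinely new input. Since $r_n\ll w_n$ in the critical regime, we have $\calH(n,K)=\calD_p^*(Kr_n)$ for all large $n$, so the events $\Omega_{n,1}(K)$ and $\Omega_{n,2}(K,\epsilon)$ from the proof of Lemma~\ref{lem:1} may be defined in the same way. The two deterministic comparisons established there---that $\xi_{n,K}=\eta_{n,K}$ on $\Omega_{n,1}(K)$, and that $\wb\eta_{n,K}(E^{-,\epsilon})\le\eta_{n,K}(E)\le\wb\eta_{n,K}(E^{+,\epsilon})$ on $\Omega_{n,1}(K)\cap\Omega_{n,2}(K,\epsilon)$ by monotonicity---are purely combinatorial and carry over unchanged. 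Hence it suffices to prove $\limn\P(\Omega_{n,1}(K))=\limn\P(\Omega_{n,2}(K,\epsilon))=1$.

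For $\Omega_{n,1}(K)$ I would reuse the bound from the proof of Lemma~\ref{lem:1}, namely $\P(\Omega_{n,1}(K)^c)\le C_{n,1}(K)^2k_n\rho_n^2+\summ r1{p-1}C_{n,2}(r,K)\rho_n(r)$. Writing the first term as $(C_{n,1}(K)k_n\rho_n)\cdot(C_{n,1}(K)\rho_n)$, Lemma~\ref{lem:Cn critical} gives $C_{n,1}(K)k_n\rho_n=O(1)$, while $C_{n,1}(K)\rho_n$ is of order $\log n\cdot d_n/(w_n^p\log d_n)=O((\log n)^{-p})\to 0$ by the choices \eqref{eq:d_n critical} and the asymptotics $w_n^p\sim((\mathsf C_F/(1-\beta))^p n$; the second term is part of $b_{n,2}(K)$, already shown to vanish in the proof of Lemma~\ref{lem:2 crit}. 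Hence $\P(\Omega_{n,1}(K))\to1$.

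For $\Omega_{n,2}(K,\epsilon)$ the argument of Lemma~\ref{lem:1} reduces, via the strong law $\sup_{i>m}|\Gamma_i/i-1|\to0$ a.s.\ as $m\to\infty$, to showing that for each fixed $m\in\N$,
\[
\limn\P\pp{\exists\,\vvi=(i_1,\dots,i_p)\in\calD_p^*(Kr_n)\ \text{s.t.}\ R_{n,\vvi}\ne\emptyset,\ i_1\le m}=0.
\]
Here the crude bound $\P(R_{n,\vvi}\cap\{1,\dots,n\}\ne\emptyset)\le n/w_n^p$ that sufficed in the sub-critical proof is no longer enough, because $n/w_n^p$ now stays bounded away from $0$. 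Instead I would use the last-renewal decomposition together with the Markov property of $R_{n,\vvi}$ (exactly as in the proof of Lemma~\ref{lem:Cn critical}) and \eqref{eq:wb F_p crit} to obtain, uniformly in $\vvi$,
\[
\P\pp{R_{n,\vvi}\cap\{1,\dots,n\}\ne\emptyset}=\frac1{w_n^p}\summ k0{n-1}\wb F_p(k)\sim\frac n{w_n^p}\cdot\frac{(\mathsf C_F\Gamma(\beta)\Gamma(1-\beta))^p}{\log n}.
\]
Counting the multi-indices with $i_1\le m$ by $m\,|\calD_{p-1}^*(Kr_n)|$ and invoking Lemma~\ref{lem:D_p*} with \eqref{eq:r_n critical}, so that $r_n\log^{p-2}(r_n)$ is of order $\log n/\log\log n$, a union bound gives
\[
\P\pp{\exists\,\vvi\in\calD_p^*(Kr_n):R_{n,\vvi}\ne\emptyset,\ i_1\le m}\le m\,|\calD_{p-1}^*(Kr_n)|\cdot\frac{Cn}{w_n^p\log n}=O\pp{\frac m{\log\log n}}\to0,
\]
using $n/w_n^p=O(1)$. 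This yields $\P(\Omega_{n,2}(K,\epsilon))\to1$ and finishes the proof.

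The only step that is not a mechanical translation of the sub-critical proof, and the one I expect to be the main obstacle, is this last estimate: one must keep the extra $(\log n)^{-1}$ factor in $\P(R_{n,\vvi}\cap\{1,\dots,n\}\ne\emptyset)$---which reflects the null-recurrence of the $p$-fold-intersection renewal process at criticality---in order to absorb the comparatively large cardinality $|\calD_{p-1}^*(Kr_n)|$, of order $\log n/\log\log n$, of the truncation region. The sub-critical argument could afford the cruder bound only because there $n/w_n^p=n^{\beta_p}\to0$ supplied the needed decay by itself.
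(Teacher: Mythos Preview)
Your proof is correct and follows essentially the same approach as the paper's own proof, which is also presented as a minor variant of Lemma~\ref{lem:1} with the single change that \eqref{eq:Omegan2} must be replaced by the sharper bound $\proba(R_{n,\vvi}\cap\{1,\dots,n\}\ne\emptyset)\le C/\log n$ (via the last-renewal decomposition and $\wb F_p$) to absorb the larger count $|\calD_{p-1}^*(Kr_n)|$. You have correctly identified this as the one nontrivial adjustment, and your treatment of $\Omega_{n,1}(K)$ simply spells out in more detail what the paper leaves implicit under ``almost identical.''
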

\begin{proof} 
The proof is almost identical to the proof of Lemma \ref{lem:1}.  We only mention that  \eqref{eq:Omegan2} is replaced by
\begin{align*}
&\proba\pp{\exists  \vvi = (i_1,\dots,i_p)\in |\calD_p^*(Kr_n)|, \mbox{ s.t. } R_{n,\vvi}\ne\emptyset, i_1\le m } \\\le &  \frac{C}{\log(n)}       |\calD^*_{p-1}(Kr_n )|  
\le     \frac{C r_n\log^{p-2}(r_n)}{\log(n)}  \le  \frac{C}{ \log\log n }.
\end{align*}
\end{proof}
\begin{proof}[Proof of Proposition \ref{prop:2}]
The proof is similar to that of Proposition \ref{prop:1} in Section \ref{sec:approximation},  which follows from  Lemmas \ref{lem:1 crit} and \ref{lem:2 crit}.
\end{proof}

\subsection{Proof of \eqref{eq:thm crit}}

\begin{Lem}\label{lem:remainder1}
With the setup for $\beta_p = 0$,  the uniform control \eqref{eq:uniform_control'} continues to hold.
\end{Lem}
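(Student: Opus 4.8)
The plan is to mirror the proof of Lemma \ref{lem:remainder} almost verbatim, adjusting only the numerology dictated by the critical-regime choices \eqref{eq:d_n critical} and \eqref{eq:r_n critical}. First I would observe that, since here $\beta_{p-1} = -1/p < 0$, the same dichotomy as before applies: one shows with probability tending to one that $X_{n,k}\topp K = \wt X_{n,k}\topp K$, where $\wt X_{n,k}\topp K$ is the sum over the full simplex $\calD_p^*(Kr_n)$ rather than over $\calH(n,K)$. The discrepancy between the two requires some $\vvi \in \calD_p^*(Kr_n)$ with $i_p > w_n$ and $R_{n,\vvi}\neq\emptyset$, which forces $(i_1,\dots,i_{p-1})\in\calD_{p-1}^*(Kr_n/w_n)$ with $R_{n,(i_1,\dots,i_{p-1})}\neq\emptyset$; bounding the expected cardinality of this set via Lemma \ref{lem:rho} (with $d_n = n$), \eqref{eq:Dp}, and the new rate \eqref{eq:r_n critical}, one finds it is $\le C|\calD_{p-1}^*(Kr_n/w_n)|\cdot n/w_n^{p-1} \le C n r_n w_n^{-p}\log^{p-2}r_n$, and since $nr_n/w_n^p = c_n^{-\alpha} \sim (\log n)/(n(\log\log n)^{p-1})$ this is $\le C(\log\log n)^{-(p-1)}\log^{p-1}(\log n)\to 0$ — in fact even the crude bound $C(\log n)^{-1}\cdot r_n\log^{p-2}r_n$ is easily seen to vanish. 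So it suffices to prove \eqref{eq:uniform_control'} for $\wt X_{n,k}\topp K$.

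Next I would run exactly the same decomposition as in Lemma \ref{lem:remainder}: split the tail sum $\sum_{\vvi\in\calD_p\setminus\calD_p^*(Kr_n)}$ according to the number $q$ of indices $i_1<\cdots<i_q\le m$ lying below a fixed threshold $m\ge 2p/\alpha$, pull out the factor $[\Gamma_{\vvi_{1:q}}]^{-1/\alpha}$ (which has finite moments since $(p-1)q/\alpha$-type constraints are met for $q\le p-1$), and reduce to showing $\lim_{K\to\infty}\limsup_n \proba(Z_{n,i_1,\dots,i_q}(K)>\epsilon)=0$ for each $q$. Then the second-moment estimate goes through with the same orthogonality-of-Rademacher argument and the bound \eqref{eq:esp Gamma product bound}, yielding
\[
\esp Z_{n,i_1,\dots,i_q}(K)^2 \le C\,\frac{n r_n^{2/\alpha}}{w_n^p}\sum_{\substack{m<i_{q+1}<\cdots<i_p\\ i_{q+1}\cdots i_p > Kr_n/m^q}} (i_{q+1}\cdots i_p)^{-2/\alpha} \le C\,\frac{n r_n^{2/\alpha}}{w_n^p}\,(Kr_n)^{1-2/\alpha}\log^{p-q-1}(Kr_n),
\]
by the same integral/change-of-variables computation as before. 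The only difference is that now $nr_n/w_n^p \sim (\log n)/(n(\log\log n)^{p-1})$ and $r_n \sim C(\log n)/(\log\log n)^{p-1}$, so substituting gives $\esp Z_{n,i_1,\dots,i_q}(K)^2 \le C K^{1-2/\alpha}(\log n)^{-2/\alpha}\cdot r_n^{1-2/\alpha}\log^{p-q-1}r_n$, which — since $r_n$ and $\log r_n$ are both polylogarithmic in $n$ and $1-2/\alpha<0$ — tends to $0$ as $n\to\infty$ for every fixed $K$ (and is uniformly small in $K\ge 1$); Markov's inequality then finishes \eqref{eq:zeta}.

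The main point of care, and the only place where the critical regime genuinely differs, is verifying that the replacement $X_{n,k}\topp K \to \wt X_{n,k}\topp K$ step still works with the dramatically larger block size $d_n \sim n/\log^p n$ — i.e.\ that $\what\calD_{p-1}^*(Kr_n/w_n)$ is still empty with high probability. Unlike in the sub-critical regime, here $r_n$ is only polylogarithmic while $w_n$ is a genuine power of $n$, so $Kr_n/w_n \to 0$ and the simplex $\calD_{p-1}^*(Kr_n/w_n)$ is {\em empty} for all large $n$ (there are no integer points with product below $1$), making this step essentially trivial rather than a genuine estimate. I would double-check this edge case explicitly and note that consequently the proof is, if anything, easier here than in Section \ref{sec:sub-critical}; after that, the remainder of the argument is a line-by-line transcription of the proof of Lemma \ref{lem:remainder} with the new rates plugged in, and I would simply write ``the proof is identical to that of Lemma \ref{lem:remainder} upon substituting the rates \eqref{eq:d_n critical} and \eqref{eq:r_n critical}, and noting that $\calD_{p-1}^*(Kr_n/w_n)=\emptyset$ for $n$ large.''
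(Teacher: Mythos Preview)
Your proposal contains a genuine computational error that invalidates the main step. You claim that the direct $L^2$ estimate from Lemma \ref{lem:remainder} still works, writing $nr_n/w_n^p \sim (\log n)/(n(\log\log n)^{p-1})$ and concluding that $\esp Z_{n,i_1,\dots,i_q}(K)^2\to 0$. But this is wrong: since $\beta_p=0$ implies $w_n^p\sim Cn$, one has $nr_n/w_n^p = n/c_n^\alpha \sim (\log n)/(\log\log n)^{p-1}$ (no extra $n$ in the denominator), and hence
\[
\esp Z_{n,i_1,\dots,i_q}(K)^2 \;\le\; CK^{1-2/\alpha}\,\frac{n r_n}{w_n^p}\,\log^{p-q-1}(Kr_n)\;\sim\; CK^{1-2/\alpha}\,\frac{\log n}{(\log\log n)^{q}}\;\longrightarrow\;\infty
\]
for every $q\in\{0,\dots,p-1\}$. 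The paper states this explicitly (``the previous estimate \eqref{eq:1} no longer works'') and then introduces a genuinely new idea: replace the threshold $Kr_n$ by a larger $\wt r_n := r_n(\log n)^{1/(2/\alpha-1)}$, so that the $L^2$ bound for the corresponding $\wt Z_{n,i_1,\dots,i_q}(K)$ does vanish, and then control the difference $|\wt Z - Z|$ by showing that on an event $\Omega_n^c$ of probability tending to one, at most one term in the sum over $Kr_n<[\vvi]\le K\wt r_n$ is nonzero for each $k$, whence the difference is bounded by $K^{-1/\alpha}\sup_i(\Gamma_i/i)^{-p/\alpha}$. This two-scale truncation is the missing ingredient in your sketch.

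A minor additional slip: you write $\beta_{p-1}=-1/p<0$, but with $\beta=(p-1)/p$ one has $\beta_{p-1}=1/p>0$. This is why the paper simply says ``we are actually in the simpler case $X_{n,k}\topp K=\wt X_{n,k}\topp K$'' --- no argument about $\what\calD_{p-1}^*$ is needed at all, since $\calH(n,K)=\calD_p^*(Kr_n)$ holds directly once $r_n\le Kw_n$. Your alternative justification (that $Kr_n/w_n\to 0$) happens to give the same conclusion, but the premise you stated is incorrect.
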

\begin{proof}%
The first part of the proof is as in the proof of Lemma \ref{lem:remainder}. With $\beta_p=0$, we are actually in the simpler case $X_{n,k}\topp K = \wt X_{n,k}\topp K$ therein, and we arrive at \eqref{eq:zeta} as before. That is, with
\[
 Z_{n,i_1,\dots,i_q}(K):= \max_{k=1,\dots,n}  r_n^{1/\alpha}\abs{\sum_{m<i_{q+1}<\cdots<i_p, [\vvi]>Kr_n} \frac{[\varepsilon_{\vvi_{q+1:p}}]}{  [\Gamma_{\vvi_{q+1:p}}]^{1/\alpha}} \inddd{k\in R_{n,\vvi}}},\ q=0,\dots,p-1,
\]
  it suffices to show that for each $q=0,\dots,p-1, i_1,\dots,i_q\le m$ fixed, 
\equh\label{eq:zeta0}
\lim_{K\to\infty}\limsupn \proba\pp{Z_{n,i_1,\dots,i_q}(K)>\epsilon} = 0 \mfa \epsilon>0.
\eque
Recall in the proof of Lemma \ref{lem:remainder} for the case $\beta_p<0$, the relation above was established by  an $L^2$ estimation directly. In the case $\beta_p=0$ here, we need a refinement (the previous estimate \eqref{eq:1} no longer works: now $nw_n^{-p}r_n\log^{p-q-1}(r_n) \sim C\frac{\log n}{(\log\log n)^q} \to\infty$). 
Instead, this time we first control
\[
\wt Z_{n,i_1,\dots,i_q}(K):= \max_{k=1,\dots,n}  r_n^{1/\alpha}\abs{\sum_{m<i_{q+1}<\cdots<i_p, [\vvi]>K\wt r_n} \frac{[\varepsilon_{\vvi_{q+1:p}}]}{  [\Gamma_{\vvi_{q+1:p}}]^{1/\alpha}} \inddd{k\in R_{n,\vvi}}}, q=0,\dots,p-1,
\]
with 
\[
\wt r_n :=r_n(\log n)^{\frac1{2/\alpha-1}} \gg r_n.
\]
The previous $L^2$-estimate \eqref{eq:1} now gives
\begin{align*}
\esp\wt Z_{n,i_1,\dots,i_q}(K)^2 & \le C nw_n^{-p}r_n^{2/\alpha} (K\wt r_n)^{1-2/\alpha}  \log^{p-q-1}(  \wt r_n)  \\
&\le C K^{1-2/\alpha}\frac{r_n}{\log n}\log^{p-q-1}(  r_n)\sim \frac{CK^{1-2/\alpha}}{(\log \log n)^{q}}.
\end{align*}
So \eqref{eq:zeta0} holds with $Z_{n,i_1,\dots,i_q}(K)$ replaced by $\wt Z_{n,i_1,\dots,i_q}(K)$. 

Next, we examine the difference
\equh\label{eq:diff}
\abs{\wt Z_{n,i_1,\dots,i_q}(K) - Z_{n,i_1,\dots,i_q}(K)}\le r_n^{1/\alpha} \max_{k=1,\dots,n}\abs{\sum_{\substack{m<i_{q+1}<\cdots<i_p\\
Kr_n\le [\vvi]\le K\wt r_n}}\frac{[\varepsilon_{i_{q+1:p}}]}{[\Gamma_{i_{q+1:p}}]^{1/\alpha}}\inddd{k\in R_{n,\vvi}}},
\eque
and consider the following event 
\[
\Omega_n\equiv\Omega_n(i_1,\dots,i_q):=\ccbb{\exists k=1,\dots,n:\sum_{m<i_{q+1}<\cdots<i_p\le K\wt r_n}\inddd{k\in R_{n,\vv i}}\ge 2}.
\]
The key observation is that in the event $\Omega_n^c$, for every $k=1,\dots,n$, the summation on the right-hand side of \eqref{eq:diff} has at most one non-zero term, and therefore, in the event $\Omega_n^c$, \eqref{eq:diff} becomes
\begin{align*}
\abs{\wt Z_{n,i_1,\dots,i_q}(K) - Z_{n,i_1,\dots,i_q}(K)}& \le r_n^{1/\alpha}\max_{\substack{m<i_{q+1}<\cdots<i_p\\Kr_n<[\vvi]\le K\wt r_n}} [\Gamma_{\vvi}]^{-1/\alpha}\\
& \le 
r_n^{1/\alpha} \max_{\substack{m<i_{q+1}<\cdots<i_p\\Kr_n<[\vvi]\le K\wt r_n}} (Kr_n)^{-1/\alpha} \frac{[\Gamma_{\vvi}]^{-1/\alpha}}{[\vvi]^{-1/\alpha}}\le K^{-1/\alpha}\sup_{i\in\N}\pp{\frac{\Gamma_i}{i}}^{-p/\alpha}.
\end{align*}
However, $\sup_{i\in\N}(\Gamma_i/i)^{-p/\alpha}$ is a finite random variable, and hence 
\[
\lim_{K\to\infty}\limsupn \proba\pp{\ccbb{\abs{\wt Z_{n,i_1,\dots,i_q}(K) - Z_{n,i_1,\dots,i_q}(K)}>\epsilon}\cap\Omega_n^c} = 0, \mfa \epsilon>0.
\]
It remains to examine $\Omega_n$. We have (recall that $i_1,\dots,i_q\le m$ are fixed),
\[
\proba(\Omega_n) \le \sum_{m< i_{q+1}<\cdots<i_{p+1}\le K\wt r_n}\proba\pp{R_{n,(i_1,\dots,i_{p+1})}\ne\emptyset} \le \binom{K\wt r_n}{p-q+1}\frac n{w_n^{p+1}} \le C\frac{\wt r_n^{p-q+1}}{w_n}\to 0.
\]
So combining   the two relations above completes the proof.
\end{proof}
\begin{proof}[Proof of \eqref{eq:thm crit}]
The proof is the same as for \eqref{eq:thm sub-crit} in Section \ref{sec:remainder}, with Proposition \ref{prop:1} and Lemma \ref{lem:remainder} replaced by Proposition \ref{prop:2} and Lemma \ref{lem:remainder1}, respectively.
\end{proof}

\begin{acks}[Acknowledgments]
The authors would like to thank Larry Goldstein, Takashi Owada and Gennady Samorodnitsky for very helpful discussions. The authors would like to thank an anonymous referee for the critical and yet constructive report that has helped us significantly during the revision. 
\end{acks}
\begin{funding}
The second author was partially supported by Army Research Office, USA (W911NF-20-1-0139). 
\end{funding}

\bibliographystyle{imsart-nameyear} 
\bibliography{references,references18}

\end{document}